\newtheorem{theorem}{Theorem}[section]
\newtheorem{lemma}[theorem]{Lemma}
\newtheorem{definition}[theorem]{Definition}
\newtheorem{corollary}[theorem]{Corollary}
\newtheorem{proposition}[theorem]{Proposition}
\newtheorem{remark}[theorem]{Remark}
\newtheorem{notation}[theorem]{Notation}
\newcommand{\Fix}{\mathop{\mathrm{Fix}}}
\newcommand{\OmSet}{\Omega_{\textrm{set}}}
\newcommand{\Irr}{\mathop{\mathrm{Irr}}}
\newcommand{\fpr}{\mathop{\mathrm{fpr}}\nolimits}
\newcommand{\GL}{\mathop{\mathrm{GL}}\nolimits}
\newcommand{\PGammaL}{\mathop{\mathrm{P\Gamma L}}\nolimits}
\newcommand{\PGammaU}{\mathop{\mathrm{P\Gamma U}}\nolimits}
\newcommand{\PGL}{\mathop{\mathrm{PGL}}\nolimits}
\newcommand{\PSL}{\mathop{\mathrm{PSL}}\nolimits}
\newcommand{\Aut}{\mathop{\mathrm{Aut}}}
\renewcommand{\O}{\mathop{\mathrm{O}}\nolimits}
\newcommand{\PSU}{\operatorname{\mathrm{PSU}}}
\newcommand{\POmega}{\mathop{\mathrm{P}\Omega}\nolimits}
\newcommand{\Alt}{\mathop{\mathrm{Alt}}\nolimits}
\newcommand{\Sym}{\mathop{\mathrm{Sym}}\nolimits}
\newcommand{\C}{\mathcal{C}}
\newcommand{\Sp}{\operatorname{\mathrm{Sp}}}
\newcommand{\PSp}{\mathop{\mathrm{PSp}}\nolimits}
\newcommand{\SGO}{\mathrm{S}(g,\Omega)}
\newcommand{\SGOp}{\mathrm{S}_1(g,\Omega)}
\newcommand{\SGOpp}{\mathrm{S}_2(g,\Omega)}
\renewcommand{\wr}{\mathop{\mathrm{wr}}}
\def\cent#1#2{{\bf C}_{{#1}}({{#2}})}
\def\Zent#1{{\bf Z}({{#1}})}
\def\norm#1#2{{\bf N}_{{{#1}}}({{{#2}}})}
\begin{document}

\title[Regular Orbits]{Finite primitive groups and regular orbits of group elements}

\author[S.~Guest]{Simon Guest}
\address{Simon Guest, Department of Mathematics, Imperial College London, \newline 
South Kensington Campus,
London
SW7 2AZ}\email{simon.guest@imperial.ac.uk}

\author[P. Spiga]{Pablo Spiga}
\address{Pablo Spiga, Dipartimento di Matematica e Applicazioni,\newline
University of Milano-Bicocca,
 Via Cozzi 55 Milano, MI 20125, Italy} \email{pablo.spiga@unimib.it}

\thanks{Address correspondence to P. Spiga,
E-mail: pablo.spiga@unimib.it}

\begin{abstract}
We prove that if $G$ is a finite primitive permutation group and if $g$ is an
element of $G$, then either $g$ has a cycle of length equal to its
order, or for some $r$, $m$ and $k$, the group $G \leq \Sym(m) \wr \Sym(r)$ preserves the
product structure of $r$ direct copies of the natural action of $\Sym(m)$ 
on $k$-sets. This gives an answer to a question of Siemons and Zalesski and a solution to a conjecture of Giudici, Praeger and the second author. 
\end{abstract}
\subjclass[2010]{20B15, 20H30}

\keywords{cycle lengths; element orders; primitive groups}
\maketitle

\section{Introduction}\label{intro}
Let $G$ be a finite primitive permutation group on a set $\Omega$ and let $H \leq G$.
A \textit{regular orbit} of $H$ in $\Omega$ is an $H$-orbit of cardinality $|H|$. In particular, when $H=\langle g\rangle$,  a
regular orbit of $H$ is the point set of a $g$-cycle of length equal to the order $|g|$ of
$g$ in its disjoint cycle representation; we call such a cycle a \textit{regular cycle}. Motivated by a problem in representation theory, Siemons
and Zalesski~\cite{SZ} asked for conditions under which subgroups of primitive groups
could be guaranteed to have regular orbits.

Following the preliminary work of Emmett, Siemons and Zalesski~\cite{EZ,SZ,SZ1}, Giudici, Praeger and the second author~\cite{GPSreduction} have investigated this question for cyclic subgroups and have noticed that (surprisingly) primitive groups admitting elements with no regular orbits are very uncommon. 

The alternating groups $\Alt(m)$ (with $m \geq 7$) and the symmetric groups $\Sym(m)$ (with $m\geq 5$)  are
examples of groups admitting elements with no regular orbit. (For instance, the permutation
$g = (1\,2\,3)(4\,5)(6\,7) \in \Alt(m)$ has no cycle of length $|g| = 6$ in its natural action on $m$ points.) We denote the collection of the $k$-subsets of $\{1,\ldots,m\}$ by ${m\choose k}$ and we refer to the natural action of $\Sym(m)$ on ${m\choose k}$ as its
$k$-set action. It is easy to see that, for $k$ fixed and for  $m$ large compared to $k$, the group $\Sym(m)$ in its $k$-set action is another permutation group admitting elements with no regular orbit. Actually, it is possible to specify precisely the $k$-set actions of finite symmetric
groups for which all elements have regular cycles~\cite[Theorem~$1.1$]{GPSreduction}.

Loosely speaking the main result of this paper shows that the groups above are the  building blocks of every primitive permutation group admitting elements with no regular cycle.

\begin{theorem}\label{main} Let $G$ be a finite primitive permutation group on $\Omega$ and let $g\in G$. Then either $g$ has a regular cycle, or there exist integers $k\geq 1$, $r\geq 1$ and $m\geq 5$ such that $G$ preserves a product structure on $\Omega = \Delta^r$ and
$\Alt(m)^r\unlhd G\leq  \Sym(m) \wr \Sym(r)$, where  $\Delta = {m\choose k}$ and $\Sym(m)$ induces its natural $k$-set action on $\Delta$.
\end{theorem}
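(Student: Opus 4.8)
The plan is to combine the O'Nan--Scott classification of the finite primitive permutation group $G$ with a simple counting criterion for the existence of a regular cycle. Namely, $g$ has no regular cycle precisely when $\Omega=\bigcup_{p\mid|g|}\Fix(g^{|g|/p})$, the union being over the prime divisors $p$ of $|g|$; since each $g^{|g|/p}$ has order exactly $p$, a regular cycle is guaranteed as soon as $\sum_{p\mid|g|}\fpr(g^{|g|/p})<1$, where $\fpr$ denotes the fixed-point ratio. So the work reduces to bounding fixed-point ratios of prime-order elements in primitive actions, and to treating by hand the few configurations where this estimate is too weak.

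First I would dispose of the ``easy'' O'Nan--Scott types. For $G$ of affine type one checks directly, following the preliminary work of Emmett, Siemons and Zalesski and of Giudici, Praeger and Spiga, that an affine transformation of $\mathbb{F}_p^d$ always has a cycle of length equal to its order, so $g$ has a regular cycle. For $G$ of diagonal or twisted wreath type the fixed-point ratio of any element of prime order is $O(|T|^{-1})$-small, where $T$ is the nonabelian simple factor of the socle, and this easily forces $\sum_p\fpr(g^{|g|/p})<1$. For $G$ of product action type, $G\le H\wr\Sym(r)$ preserving a product structure $\Omega=\Delta^r$ with $H$ primitive almost simple or diagonal on $\Delta$: here I would analyse the block system on the $r$ coordinates that $g$ permutes, express the $\langle g\rangle$-action on a point $(\delta_1,\dots,\delta_r)$ in terms of the components of suitable powers of $g$ acting on single coordinates, and deduce that $g$ has a regular cycle unless on some coordinate the relevant component fails to have a regular cycle in the action of $H$ on $\Delta$. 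Granting the almost simple analysis, the only surviving possibility is $\Delta={m\choose k}$ with $\soc(H)=\Alt(m)$ in its $k$-set action, which propagates to $\Alt(m)^r\unlhd G\le\Sym(m)\wr\Sym(r)$ --- exactly the exceptional conclusion, and the source of the constraints $m\ge5$ and $\Alt(m)^r\unlhd G$.

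The heart of the matter is $G$ almost simple, with socle $T$. When $T$ is sporadic the assertion is a finite computation using known character and fixed-point data. When $T$ is of Lie type of bounded rank --- the exceptional groups, and the classical groups in non-subspace (``non-standard'') actions --- the fixed-point ratio of any prime-order element is at most $|\Omega|^{-\varepsilon}$ for an absolute constant $\varepsilon>0$ by the results of Liebeck--Shalev, Burness and collaborators, and since the prime divisors of $|g|$ are suitably constrained the sum $\sum_p\fpr(g^{|g|/p})$ stays below $1$; hence $g$ has a regular cycle. That leaves the two genuinely delicate families: $T=\Alt(m)$, and $T$ classical acting on an orbit of subspaces (or pairs of subspaces) of the natural module. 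For $T=\Alt(m)$ I would invoke the explicit classification of those $k$-set, partition, and primitive-subgroup actions of symmetric and alternating groups that admit an element with no regular cycle, as in~\cite[Theorem~$1.1$]{GPSreduction} and its companions, the only surviving family being the $k$-set action. For $T$ classical in a subspace action, where fixed-point ratios of involutions and other small-order elements are not small, I would argue geometrically: starting from a putative $g$ with no regular cycle, study the Jordan form and cycle structure of the prime-power powers of $g$ on the relevant subspace variety and derive a contradiction, splitting according to the linear, unitary, symplectic and orthogonal cases and the dimension of the subspaces.

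I expect the last case --- classical groups in subspace actions --- to be the main obstacle, since there the fixed-point-ratio count does not close the argument and one is forced into a careful type-by-type analysis of how the cyclic group $\langle g\rangle$ sits inside the geometry; running a close second is the combinatorial analysis of the $\Alt(m)$ $k$-set actions, delicate precisely because the exceptional examples of the theorem live there.
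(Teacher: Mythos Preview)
Your overall architecture is exactly that of the paper together with~\cite{GPSreduction}: the O'Nan--Scott reduction, the handling of affine, diagonal, twisted wreath and product action types, and the almost simple cases with $T$ sporadic, exceptional or alternating are all carried out in~\cite{GPSreduction} (Theorem~1.3 there), so the paper simply quotes that result and concentrates on the one case left open, namely $T$ a classical group. Your fixed-point-ratio criterion is precisely Lemma~\ref{lemma:apeman}, and your treatment of non-subspace actions via the Burness bounds is exactly Section~\ref{non-subspace}.

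Where your plan diverges from the paper, and where it is genuinely incomplete, is the subspace-action case. You propose to ``argue geometrically \ldots\ splitting according to the linear, unitary, symplectic and orthogonal cases and the dimension of the subspaces''. But the subspace actions of a classical group on $k$-subspaces (totally singular, non-degenerate, of various types) form an infinite family as $k$ varies, and a direct Jordan-form analysis of cycle lengths on each of these varieties is not a finite programme. The paper circumvents this with a key idea you do not mention: a \emph{permutation-character comparison}. Lemma~\ref{characterlemma} says that if $\pi_1\le\pi_2$ as characters then regular cycles on $\Omega_1$ force regular cycles on $\Omega_2$; combining this with the Frohardt--Guralnick--Magaard result (Theorem~\ref{FGK}) that $1_P^G\le 1_K^G$ for most subspace stabilisers $K$, one reduces to a short explicit list of ``minimal'' actions (Proposition~\ref{reduction}, Cases~(i)--(vii)): totally singular $1$-spaces, non-degenerate $1$-spaces, maximal totally singular subspaces, anisotropic $2$-spaces, two exotic $\PSL$ and $\Sp$ actions, and the novelties. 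Each of these is then dispatched not by a bespoke geometric argument but by the \emph{same} counting criterion $\sum_r\fpr(g^{|g|/r})<1$, now fed with the sharp fixed-point-ratio bounds of Guralnick--Kantor~\cite{GK} and a splitting of the primes $r$ into those dividing $ep(q_0-1)$ (handled by the Liebeck--Saxl bound $4/(3q)$ or the~\cite{GK} bounds) and those that do not (handled by Proposition~\ref{fprell}, where the fixed space of $g^{|g|/r}$ has small dimension and the fpr decays like $q_0^{-\ell}$). Without the character-theoretic reduction you would have to produce uniform estimates over all $k$ simultaneously, which is substantially harder, and your sketch does not indicate how you would do this.
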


Our work gives a complete answer to the question: ``When do all elements of a finite primitive permutation group $G$ have
at least one regular cycle?'' In particular, we offer a precise answer to the question of Siemons and Zalesski~\cite{SZ} in the case of cyclic subgroups.

Theorem~\ref{main} was conjectured~\cite[Conjecture~$1.2$]{GPSreduction} by Giudici, Praeger and the second author. Using the O'Nan-Scott reduction theorem for finite primitive permutation groups, the authors of~\cite{GPSreduction} made a significant contribution towards a proof of Theorem~\ref{main}. In fact, in~\cite[Theorem~$1.3$]{GPSreduction} they have shown that Theorem~\ref{main} holds true, if it holds true for each almost simple primitive group with socle a non-abelian simple classical group. Therefore Theorem~\ref{main} follows from~\cite[Theorem~$1.3$]{GPSreduction} and the following.

\begin{theorem}\label{thrm:AS}
Let $G$ be a finite almost simple primitive group on $\Omega$ with socle $G_0$ a non-abelian simple classical group, and let $g \in G$. Then either $g$ has a regular cycle, or $(G_0,\Omega)$ is isomorphic to one of the following $(\Alt(5),\{1,\ldots,5\})$, $(\Alt(6),\{1,\ldots,6\})$ or $(\Alt(8),\{1,\ldots,8\})$.
\end{theorem}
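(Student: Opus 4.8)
Write $\Omega$ as the coset space of a maximal subgroup $H$ of $G$, put $n=|g|$ and let $p_1,\dots,p_t$ be the distinct primes dividing $n$. The cyclic group $\langle g\rangle$ has a regular orbit on $\Omega$ if and only if some point is fixed by none of the prime-order subgroups $\langle g^{n/p_i}\rangle$, i.e.\ if and only if $\Omega\neq\bigcup_{i=1}^{t}\Fix(g^{n/p_i})$. Consequently, if $n$ is a prime power then $g$ automatically has a regular cycle (a nontrivial element of a faithful action cannot fix every point), so one may assume $t\geq2$; and a convenient sufficient condition for a regular cycle is
\[
\sum_{i=1}^{t}\fpr_{\Omega}\!\big(g^{n/p_i}\big)<1 ,
\]
where $\fpr_{\Omega}$ is the fixed-point ratio. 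The plan is to prove this inequality for all but a short, explicitly bounded list of pairs $(G,\Omega)$ and then to treat the survivors directly.

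The two ingredients I would import are: (a) the fixed-point-ratio bounds of Burness (and of Liebeck and Shalev) for $\fpr_{\Omega}(x)$ with $G$ almost simple classical, $\Omega$ primitive and $x\in G$ of prime order, which give $\fpr_{\Omega}(x)<q^{-1}$ with room to spare \emph{unless} $\Omega$ is a subspace action (an orbit of totally singular, non-degenerate, or — in the linear case — arbitrary subspaces of the natural module $V=\mathbb{F}_q^{d}$) or $d$ is small; and (b) the known results on element orders in finite classical groups (Guest, Morris, Praeger and Spiga, among others), which force $t=\omega(n)$ to be small — crudely $t\leq\log_{2}|\Omega|$ since $|\langle g\rangle|$ divides $|\Omega|!$, and more sharply $t\ll\log|V|/\log\log|V|$. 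For a non-subspace primitive action, feeding (a) and (b) into the displayed inequality makes the sum strictly less than $1$ as soon as $q^{d}$ exceeds an explicit, modest threshold, so only finitely many such $(G,\Omega)$ remain.

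The heart of the matter is thus the subspace actions (and the low-dimensional groups), where $\fpr_{\Omega}$ can be as large as roughly $q^{-1}$ — attained by transvections and by semisimple elements fixing a hyperplane — so the crude count fails precisely when $q$ is small and $d$ is large. There I would abandon $\fpr$ as a black box and count invariant subspaces directly: $\Fix(g^{n/p_i})$ is the set of members of $\Omega$ invariant under $\langle g^{n/p_i}\rangle$, and, via the rational canonical form of $g$ on $V$, such an invariant subspace is assembled from the primary components, so its number is a polynomial in $q$ whose degree is controlled by the eigenvalue multiplicities of $g^{n/p_i}$. Since $g^{n/p_i}$ has prime order, its nontrivial eigenvalues are $p_i$-th roots of unity, and a $\Fix$-set of size close to $|\Omega|$ forces its fixed space to be a hyperplane, hence $g^{n/p_i}$ to be transvection- or reflection-like; this pins down the structure of $g$ and bounds $d$, $q$ and $t$ simultaneously. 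Keeping careful track of singular versus non-singular subspaces, of graph and field automorphisms, and of the scalars factored out, this reduces everything to a bounded list, which is then checked by hand and with \textsc{Magma} against the known classifications of maximal subgroups of low-dimensional classical groups.

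The three exceptions surface at exactly this last stage, through the coincidences $\PSL_2(4)\cong\PSL_2(5)\cong\Alt(5)$, $\PSp_4(2)'\cong\PSL_2(9)\cong\Alt(6)$ and $\PSL_4(2)\cong\Alt(8)$: the relevant action is in each case the natural $1$-set action of the alternating group, over which the full symmetric group contains an element with no regular cycle (for instance $(1\,2\,3)(4\,5)(6\,7)$ in $\Sym(8)$), yielding precisely the listed pairs $(\Alt(5),\{1,\dots,5\})$, $(\Alt(6),\{1,\dots,6\})$ and $(\Alt(8),\{1,\dots,8\})$. I expect the main obstacle to be the sharp subspace-action analysis for $q\in\{2,3,4\}$, together with the low-rank families $\PSL_d$, $\PSp_d$, $\PSU_d$, $\POmega^{\pm}_d$ that carry many maximal subgroups; by contrast the generic estimate and the concluding machine computations should be comparatively routine.
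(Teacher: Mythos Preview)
Your strategy is essentially the one the paper follows: the same fixed-point-ratio criterion (the paper's Lemma~2.4), the same split into non-subspace versus subspace actions using the Burness and Liebeck--Saxl bounds, the same appeal to element-order bounds to control $t=\omega(|g|)$, and the same endgame of bounded lists finished off in \textsc{Magma}. Two structural ingredients that you do not mention are what make the subspace case tractable in practice. First, rather than counting invariant $k$-spaces for every $k$ via rational canonical form, the paper invokes the permutation-character comparison of Frohardt--Guralnick--Magaard (their Theorem~5.3 here): if $1_P^G\le 1_K^G$ then a regular cycle on $G/P$ forces one on $G/K$, which reduces all subspace actions to a short list (singular $1$-spaces, non-degenerate $1$-spaces, maximal totally singular subspaces, anisotropic $2$-spaces, plus a handful of special configurations). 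Second, you implicitly assume $H\cap G_0$ is maximal in $G_0$; the paper devotes an entire section to the $G$-\emph{novelties}, where $H$ is maximal in $G$ but $H\cap G_0$ is not maximal in $G_0$ (these arise from graph automorphisms of $\PSL_n$, triality in $\POmega_8^+$, and a few sporadic rows of Kleidman--Liebeck's Table~3.5H), and this requires separate bookkeeping. Neither omission is fatal to your plan, but without the character reduction your direct subspace count would have to be repeated for every $k$ and every isometry type, and without the novelty analysis the case division is incomplete.
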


Theorem~\ref{main} is yet another result showing that the primitive group $\Sym(m)\wr\Sym(r)$ endowed with its natural product action has rather peculiar properties (see for example~\cite[Section~$6$]{Cameron1}). Since it is computationally quite cheap to determine the order and the cycle lengths of a permutation, Theorem~\ref{main} has some practical interest and can be easily implemented as part of a recognition algorithm for primitive permutation groups~\cite{magma}. In practice Theorem~\ref{main} can be used to determine how much effort 
should be invested into running the ``Jellyfish algorithm''~\cite{LNPS,NeSe} on a primitive group.

Theorem~\ref{main} has also the following striking corollary.
\begin{corollary}\label{cor}
 Let $G$ be a finite primitive permutation group on $\Omega$ and let $g\in G$. Then either $|g|\leq |\Omega|$, or there exist integers $k\geq 1$, $r\geq 1$ and $m\geq 5$ such that $G$ preserves a product structure on $\Omega = \Delta^r$ and
$\Alt(m)^r\unlhd G\leq  \Sym(m) \wr \Sym(r)$, where  $\Delta = {m\choose k}$ and $\Sym(m)$ induces its natural $k$-set action on $\Delta$.
\end{corollary}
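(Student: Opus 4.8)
The plan is to obtain Corollary~\ref{cor} as an immediate consequence of Theorem~\ref{main}, using only the elementary remark that a regular cycle of $g$ certifies the bound $|g|\le|\Omega|$. Concretely, I would argue the contrapositive of the dichotomy: suppose $|g|>|\Omega|$. Every cycle appearing in the disjoint cycle representation of $g$ on $\Omega$ has length at most $|\Omega|$, hence strictly less than $|g|$; therefore $g$ has no cycle of length $|g|$, that is, $g$ has no regular cycle. By Theorem~\ref{main} there then exist integers $k\ge 1$, $r\ge 1$ and $m\ge 5$ such that $G$ preserves a product structure on $\Omega=\Delta^r$ with $\Alt(m)^r\unlhd G\le\Sym(m)\wr\Sym(r)$ and $\Delta={m\choose k}$, where $\Sym(m)$ acts by its natural $k$-set action on $\Delta$. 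This is precisely the second alternative in the statement of the corollary.

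No separate argument is needed for the remaining case, since the two alternatives in Corollary~\ref{cor} are not required to be mutually exclusive: whenever $g$ has a regular cycle, the same length bound gives $|g|\le|\Omega|$ directly. Thus the entire content of the corollary is carried by Theorem~\ref{main}, and there is no genuine obstacle beyond that theorem; the only point of the corollary is that the structural hypothesis ``$g$ has a regular cycle'' may be relaxed to the purely numerical, and computationally inexpensive, condition ``$|g|\le|\Omega|$''. If desired, one can append a remark recording the by-product that, for $G$ outside the product-action family of Theorem~\ref{main}, the existence of a regular cycle for $g$ is in fact equivalent to $|g|\le|\Omega|$.
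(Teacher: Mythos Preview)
Your argument is correct and is exactly the (implicit) derivation the paper has in mind: the corollary is stated without proof as an immediate consequence of Theorem~\ref{main}, via the trivial observation that a regular cycle of $g$ has length $|g|\le|\Omega|$. The only caveat is that your final optional ``by-product'' remark is vacuous rather than informative, since outside the product-action family both conditions hold for every $g$; I would drop it.
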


\section{Preliminary results}\label{preliminaries}
We start our analysis with some elementary (but very useful) lemmas.

\begin{lemma}[{{\cite[Lemma~$2.1$]{GPSreduction}}}]\label{basic}
Let $G$ be a permutation group on $\Omega$. Assume that for every
$g\in G$, with $|g|$ square-free, $g$ has a regular cycle. Then, for every $g\in G$, the element $g$ has a regular cycle.
\end{lemma}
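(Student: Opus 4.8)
The plan is to reduce an arbitrary element to its square-free part by passing to an appropriate power, so that the hypothesis applies directly. First I would fix $g\in G$, set $n=|g|$, write $n=\prod_{i=1}^{t}p_i^{a_i}$ for its prime factorisation, let $R=\prod_{i=1}^{t}p_i$ be the radical of $n$, and put $m=n/R=\prod_{i=1}^{t}p_i^{a_i-1}$. Since $m$ divides $n$, the element $h:=g^{m}$ has order $n/m=R$, which is square-free, so by hypothesis $h$ has a regular cycle; choose $\omega\in\Omega$ lying on such a cycle, i.e. whose $\langle h\rangle$-orbit has size exactly $R$.

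Next I would show that the $\langle g\rangle$-orbit of this same point $\omega$ has size $n$, which is exactly what is required. Let $\ell$ be the length of the $\langle g\rangle$-orbit of $\omega$, so $\ell\mid n$. Restricting $g$ to this orbit gives an $\ell$-cycle, and its $m$-th power $h$ restricted to the orbit is a product of $\gcd(\ell,m)$ cycles each of length $\ell/\gcd(\ell,m)$; in particular the $\langle h\rangle$-orbit of $\omega$ has size $\ell/\gcd(\ell,m)$, and by the choice of $\omega$ this equals $R$. Writing $\ell=\prod_{i}p_i^{b_i}$ with $0\le b_i\le a_i$, one has $\gcd(\ell,m)=\prod_{i}p_i^{\min(b_i,a_i-1)}$, so $\ell/\gcd(\ell,m)=\prod_{i}p_i^{\max(0,\,b_i-a_i+1)}$; comparing with $R=\prod_{i}p_i$ forces $b_i=a_i$ for every $i$, that is $\ell=n$. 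Hence $\omega$ lies on a regular cycle of $g$, and the lemma follows.

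I do not expect any serious obstacle here: the content is a short piece of bookkeeping once one hits on the idea of feeding the ``radical power'' $g^{n/R}$ into the hypothesis. The only two points needing a moment's attention are the elementary fact that a $g$-cycle of length $\ell$ splits into $\gcd(\ell,m)$ cycles of $g^{m}$ of equal length, and the final prime-by-prime comparison of exponents; both are routine. One could alternatively argue by induction on the number of prime divisors of $|g|$ counted with multiplicity, applying the same orbit-splitting observation one prime at a time, but the radical-power version above is the most economical.
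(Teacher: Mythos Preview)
Your argument is correct: passing to the ``radical power'' $g^{n/R}$ and then tracking how a single $g$-cycle of length $\ell$ decomposes under $g^{m}$ is exactly the right idea, and the exponent bookkeeping is clean. Note, however, that the paper does not actually prove this lemma---it is simply quoted from~\cite[Lemma~2.1]{GPSreduction}---so there is no in-paper proof to compare against; your write-up would serve perfectly well as a self-contained justification.
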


In the proof of Theorem~\ref{thrm:AS}, the previous lemma allows us to consider only elements of square-free
order. Before proceeding, we introduce a definition.

\begin{definition}{\rm Let $G$ be a finite permutation group on the finite set $\Omega$ and let $x \in G$. We let
$\Fix_\Omega(x)=\{\omega\in\Omega\mid \omega^x=\omega\}$ and
 $\fpr_\Omega(x)=|\Fix_\Omega(x)|/|\Omega|$. (The label ``fpr'' stands for ``fixed-point-ratio''.)}
\end{definition}

 We recall a basic lemma on fixed point ratios.

\begin{lemma}[{{\cite[Lemma~$2.5$]{LS}}}]\label{obvious3}
Let $G$ be a transitive permutation group on $\Omega$, let $H$ be a point stabilizer and let $g \in G$.
Then \[\mathrm{fpr}_\Omega(g)=\frac{|g^G\cap H|}{|g^G|}.\]
\end{lemma}

The next lemma is one of the main tools used in this paper.

\begin{lemma}[{{\cite[Lemma~$2.4$]{GPSreduction}}}]\label{lemma:apeman}Let $G$ be a transitive permutation group on $\Omega$ and let $g$ be in $G\setminus\{1\}$. The element
 $g$ has a regular cycle if and only if
\[\bigcup_{\substack{r\mid |g|\\r\,\mathrm{
 prime}}}\mathrm{Fix}_\Omega(g^{|g|/r})\subsetneq\Omega,\]
In particular, if \[\sum_{\substack{r\mid |g|\\r\, \mathrm{
 prime}}}\mathrm{fpr}_\Omega(g^{|g|/r})<1,\]
then $g$ has a regular cycle.
\end{lemma}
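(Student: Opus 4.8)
The plan is to reduce the existence of a regular cycle to a statement about the point stabiliser inside the cyclic group $\langle g\rangle$, and then to identify that stabiliser via the subgroup lattice of $\langle g\rangle$. By definition $g$ has a regular cycle exactly when some $\omega\in\Omega$ lies in a $\langle g\rangle$-orbit of length $|g|$; since this orbit has length $|\langle g\rangle:\langle g\rangle_\omega|$, this happens if and only if there is an $\omega\in\Omega$ with $\langle g\rangle_\omega=1$, that is, an $\omega$ fixed by no non-identity power of $g$. Thus the whole lemma amounts to describing the set of points fixed by \emph{some} non-identity power of $g$.

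Here the key observation is purely group-theoretic: in the cyclic group $\langle g\rangle$ of order $n:=|g|$, every non-trivial subgroup contains a subgroup of prime order, and the subgroups of prime order are exactly the groups $\langle g^{n/r}\rangle$ with $r$ a prime divisor of $n$. Consequently $\langle g\rangle_\omega\neq 1$ if and only if $\langle g^{n/r}\rangle\leq\langle g\rangle_\omega$ for some prime $r\mid n$, that is, if and only if $g^{n/r}$ fixes $\omega$, i.e. $\omega\in\Fix_\Omega(g^{n/r})$, for some such $r$. Taking the union over the finitely many prime divisors $r$ of $n$, the points lying in no regular $\langle g\rangle$-orbit are precisely those of $\bigcup_{r\mid n,\,r\text{ prime}}\Fix_\Omega(g^{n/r})$; since this union is always a subset of $\Omega$, it follows that $g$ has a regular cycle if and only if that union is a \emph{proper} subset of $\Omega$, which is the first displayed statement.

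For the ``in particular'' clause I would simply invoke the union bound $\bigl|\bigcup_{r}\Fix_\Omega(g^{n/r})\bigr|\leq\sum_{r}|\Fix_\Omega(g^{n/r})|$: dividing through by $|\Omega|$, if $\sum_{r\mid n,\,r\text{ prime}}\fpr_\Omega(g^{n/r})<1$ then the union has cardinality strictly less than $|\Omega|$, hence is proper, and the first part applies. I do not expect any genuine obstacle in this argument; the only step that needs a little care is the claim that being fixed by an \emph{arbitrary} non-identity power of $g$ is already detected by the distinguished powers $g^{n/r}$ with $r$ prime, which is exactly the correspondence between subgroups of $\langle g\rangle$ and divisors of $n$ together with the fact that each non-trivial subgroup contains one of prime order.
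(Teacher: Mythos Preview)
Your argument is correct and complete. The paper itself does not prove this lemma but simply quotes it from~\cite[Lemma~2.4]{GPSreduction}; your proof is the standard one and is essentially what appears in that reference: reduce via orbit--stabiliser to the condition $\langle g\rangle_\omega=1$, observe that a non-trivial subgroup of a finite cyclic group must contain one of the prime-order subgroups $\langle g^{|g|/r}\rangle$, and conclude with the union bound for the second assertion.
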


In light of Lemma~\ref{lemma:apeman}, we define $\SGO$ to be
\begin{equation*} 
  \SGO:= \sum_{\substack{r\mid |g|\\r\, \mathrm{
 prime}}}\mathrm{fpr}_\Omega(g^{|g|/r})
\end{equation*}
and in order to prove Theorem~\ref{thrm:AS}, we will  show that (in most cases) $\SGO<1$ for every $g \in G\setminus\{1\}$. 

Unless specified otherwise, for the rest of this paper we will use the following notation.
\begin{notation}\label{generalnotation}{\rm We let $G$ denote a finite almost simple classical group defined over the finite field of size $q$ and  with socle $G_0$. For twisted groups our notation for $q$ is such that $\PSU_n(q)$ and $\POmega_{2m}^-(q)$ are the twisted groups contained in $\PSL_n(q^2)$ and $\POmega_{2m}^+(q^2)$, respectively.

We write $q=p^e$, for some prime $p$ and some $e\geq 1$, and we define
\begin{equation*}
q_0:= \begin{cases}
q^{2} & \mbox{if $G$ is unitary}\\
q & \mbox{otherwise.}
\end{cases}
\end{equation*}

We let $V$ be the natural module defined over the field $\mathbb{F}_{q_0}$ of size $q_0$ for the covering group of $G_0$, and we let $n$ be the dimension of $V$ over $\mathbb{F}_{q_0}$. Finally, we denote by $m$ the Witt index of $G_0$, that is, the maximal dimension of a totally singular subspace of $V$.}
\end{notation}

We now recall a fundamental and pioneering result of Liebeck and Saxl~\cite[Theorem~$1$]{LS}:
\begin{theorem}\label{LSthm}
Let $G$ and $q$ be as in Notation~$\ref{generalnotation}$. If $G$ acts transitively and faithfully on a set $\Omega$, then for every $x\in G\setminus\{1\}$ we have
\[\mathrm{fpr}_\Omega(x)\leq\frac{4}{3q}\]
apart from the exceptions listed in~\cite[Table~$1$]{LS}.
\end{theorem}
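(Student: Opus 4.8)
The plan is to exploit Lemma~\ref{obvious3}, which identifies $\fpr_\Omega(x)$ with the ratio $|x^G\cap H|/|x^G|$ when $H$ is a point stabilizer; it therefore suffices to bound $|x^G\cap H|$ from above and $|x^G|$ from below. Since enlarging $H$ can only enlarge this ratio, one reduces at once to the primitive case, that is, to $H$ a maximal subgroup of $G$, and then invokes Aschbacher's theorem to distribute the maximal subgroups of a finite classical group among the geometric families $\C_1,\dots,\C_8$ and the family $\mathcal{S}$ of almost simple irreducible subgroups, treating each family in turn.

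For the denominator I would use the standard lower estimates for conjugacy class sizes in classical groups: if $x$ has support $s=\dim[V,x]$, then $|\cent{G}{x}|$ is of order roughly $q^{\,n^2-cs^2}$ for a small constant $c$, so $|x^G|$ grows quickly with $s$. For the numerator one can use the bound $|x^G\cap H|\le|H|$ combined with the Liebeck--Saxl upper bounds on $|H|$, or, in the families where $H$ is itself of classical type, the sharper estimate obtained by bounding the number of $H$-classes fusing into $x^G$ together with their sizes. Combining the two inequalities gives $\fpr_\Omega(x)<4/(3q)$ in all but a short list of configurations, and these survivors are exactly what has to be analysed by hand.

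The main obstacle, and the origin of the exceptions recorded in~\cite[Table~$1$]{LS}, is concentrated in two places. First, the subspace stabilizers in $\C_1$: when $\Omega$ consists of $k$-dimensional (possibly totally singular) subspaces and $x$ has very small support, such as a transvection or a reflection, then $|x^G|$ is as small as possible and $x$ fixes a large proportion of subspaces, so $\fpr_\Omega(x)$ is genuinely of the order of a constant times $1/q$ and must be computed exactly. Second, the imprimitive and field-extension subgroups in $\C_2$ and $\C_3$ in low dimension over small fields, where the crude order bounds are too weak and an ad hoc argument — for the very smallest cases, a direct computation — is required; for the family $\mathcal{S}$, by contrast, $|H|$ is small enough relative to $|G|$ that the inequality holds comfortably and no exceptions occur. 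I would finish by collecting the finitely many remaining configurations, evaluating $\fpr_\Omega(x)$ in each via its dominant term $|x^G\cap H|/|x^G|$ (and, where needed, explicit combinatorial counts of fixed subspaces), and checking that precisely the entries of~\cite[Table~$1$]{LS} exceed $4/(3q)$.
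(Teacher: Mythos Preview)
This statement is not proved in the paper at all: it is simply quoted verbatim as \cite[Theorem~1]{LS} and used as a black box throughout. There is therefore no ``paper's own proof'' to compare your proposal against.

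That said, your sketch is a fair high-level caricature of how the original Liebeck--Saxl argument proceeds, but it is not a proof in any meaningful sense. A few points where the outline is too optimistic: the crude bound $|x^G\cap H|\le |H|$ is far too weak to yield $4/(3q)$ for the large geometric families (for parabolic $H$ one has $|H|$ of order $q^{\dim G - O(n)}$, which swamps any reasonable lower bound on $|x^G|$ for small-support $x$), so the real work in \cite{LS} lies in direct fixed-subspace counts for $\C_1$ actions, not in order comparisons; and the claim that ``no exceptions occur'' for the family $\mathcal{S}$ is not quite right either, since several entries in \cite[Table~1]{LS} come from sporadic or alternating configurations that are not covered by a uniform order bound. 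If you actually need to establish this result rather than cite it, you should work through \cite{LS} directly; reproducing it here would be a paper-length undertaking, which is precisely why the present paper imports it wholesale.
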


\begin{remark}\label{remark1}{\rm
Unfortunately (for our application) the exceptions in~\cite[Table~$1$]{LS} are all genuine. For each such case, however,~\cite[Table~$1$]{LS} lists all possible $G$-sets $\Omega$ and all possible group elements $x$ with $\fpr_\Omega(x)>4/(3q)$. Furthermore, the third column of~\cite[Table~$1$]{LS} gives an exact formula for $\fpr_\Omega(x)$ in each case.}
\end{remark}

\subsection{Number theory}
We recall that in elementary number theory, for a positive integer $n$, the number of
distinct prime divisors of $n$ is denoted by $\omega(n)$. We will find the following result of Robin useful.

\begin{lemma}[{\cite[Theorem~13]{Robin}}]\label{robin}For $n \geq 26$, we have $\omega(n)\leq \log(n)/(\log(\log(n))- 1.1714)$.
\end{lemma}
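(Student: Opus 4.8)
The plan is to reproduce Robin's original argument, reducing everything to the extremal case where $n$ is a primorial and then invoking explicit Chebyshev-type estimates. First note that for $n\ge 26$ one has $\log\log n\ge\log\log 26>1.1714$ (numerically $\log\log 26\approx 1.181$, while $\log\log 25\approx 1.169$, which is exactly why the hypothesis reads $n\ge 26$). Hence $\log\log n-1.1714>0$, and the asserted inequality is equivalent, after clearing the denominator, to $h(\log n)\ge 0$ where $h(t):=t-k\log t+1.1714\,k$, $k:=\omega(n)$, and I have used $\log\log n=\log(\log n)$. The single arithmetic input is that an integer with $k$ distinct prime divisors is at least the $k$-th primorial $N_k:=p_1p_2\cdots p_k$, so $\log n\ge\log N_k=\theta(p_k)$, where $\theta$ denotes the first Chebyshev function. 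Since $h'(t)=1-k/t$, the function $h$ decreases on $(0,k)$ and increases on $(k,\infty)$, with global minimum $h(k)=k(2.1714-\log k)$.

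For $k\le 8$ this already finishes things: $\log k\le\log 8=3\log 2<2.1714$, so $h(k)>0$, whence $h(\log n)\ge h(k)>0$ for every $n$ with $\omega(n)\le 8$, with no further computation needed.

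For $k\ge 9$ I would first record that $\theta(p_k)>k$ — this is immediate for the first few values of $k$ and, beyond a small threshold, follows from the Rosser--Schoenfeld lower bounds for $\theta$ — so that $\log n\ge\theta(p_k)$ lies in the increasing range of $h$. It then suffices to prove $h(\theta(p_k))\ge 0$, i.e.\ $\theta(p_k)\ge k\bigl(\log\theta(p_k)-1.1714\bigr)$. At this point I would substitute explicit prime estimates: a lower bound of the shape $\theta(p_k)\ge k(\log k+\log\log k-1)$ up to a controlled error (via Rosser--Schoenfeld/Dusart bounds for $p_k$ and for $\theta$), together with an upper bound $\theta(p_k)\le C\,k\log k$ used only to get $\log\theta(p_k)\le\log k+\log\log k+c$ with $c$ small. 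Dividing the target inequality by $k$, it collapses to $\log k+\log\log k-1\ge\log k+\log\log k+c-1.1714+o(1)$, i.e.\ $1.1714-1\ge c+o(1)$, which holds once $k$ exceeds an explicit bound $k_0$ thanks to the built-in margin $0.1714$. The finitely many remaining cases $9\le k<k_0$ are then settled by evaluating $\theta(p_k)$ numerically.

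The main obstacle is purely quantitative rather than structural: one has to feed in sufficiently sharp \emph{explicit} (not merely asymptotic) versions of the estimates for $\theta(x)$ and for the $k$-th prime so that the $0.1714$ slack genuinely absorbs every error term, and simultaneously keep the cutoff $k_0$ small enough that the residual finite verification stays routine. Everything else — the reduction to primorials, the convexity analysis of $h$, and the split at $k=8$ — is elementary.
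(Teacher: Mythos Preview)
The paper does not prove this lemma at all: it is simply quoted as \cite[Theorem~13]{Robin} and used as a black box. There is therefore nothing to compare your argument against in the paper itself.

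That said, your proposal is a faithful outline of Robin's original strategy: reduce to the primorial case via $n\ge N_k=p_1\cdots p_k$, rewrite the inequality as $h(\log n)\ge 0$ for the convex function $h(t)=t-k\log t+1.1714k$, dispose of $k\le 8$ by observing that the global minimum $h(k)=k(2.1714-\log k)$ is already positive, and for $k\ge 9$ feed in explicit Rosser--Schoenfeld / Dusart estimates for $\theta(p_k)$ and $p_k$ to push the inequality through up to a finite residual check. The structure is correct and the split at $k=8$ (since $\log 8\approx 2.079<2.1714<2.197\approx\log 9$) is the right one. As you yourself flag, the only genuine work left is quantitative: you have not actually written down which explicit constants you use for $\theta(p_k)$ and $p_k$, nor the resulting value of $k_0$, nor carried out the finite verification for $9\le k<k_0$. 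Until those numbers are on the page this is a (correct) plan rather than a proof, but for the purposes of this paper the citation to Robin is all that is required.
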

The next lemma is much more elementary and we omit its proof.
\begin{lemma}\label{series}
For $q\geq 2$, we have \[\sum_{\ell= 0}^{\infty}\ell q^{-\ell}=\frac{q}{(q-1)^2}.\]
\end{lemma}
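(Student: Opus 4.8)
The plan is to reduce the claim to the elementary summation of a geometric series. First I would record that, since $q\geq 2$, we have $0<1/q\leq 1/2$, so the series $\sum_{\ell=0}^{\infty}\ell q^{-\ell}$ is absolutely convergent and all the manipulations below (re-indexing, term-by-term subtraction) are legitimate. Write $S$ for the value of the sum.

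The cleanest route avoids calculus entirely: multiply the defining series by $q$ and re-index. This gives
\[
qS=\sum_{\ell=1}^{\infty}\ell\,q^{-(\ell-1)}=\sum_{j=0}^{\infty}(j+1)q^{-j}=S+\sum_{j=0}^{\infty}q^{-j}=S+\frac{q}{q-1},
\]
where the last step uses the standard geometric series $\sum_{j=0}^{\infty}q^{-j}=(1-1/q)^{-1}=q/(q-1)$. Rearranging yields $(q-1)S=q/(q-1)$, that is, $S=q/(q-1)^2$, which is the assertion.

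Alternatively, one may differentiate the identity $\sum_{\ell=0}^{\infty}x^{\ell}=(1-x)^{-1}$ (valid for $|x|<1$) with respect to $x$ to obtain $\sum_{\ell=1}^{\infty}\ell x^{\ell-1}=(1-x)^{-2}$, multiply through by $x$, and substitute $x=1/q$; a one-line simplification of $(1/q)(1-1/q)^{-2}$ produces the same closed form. There is no genuine obstacle in either approach: the only point deserving a word of care is the justification that the series converges and may be freely rearranged, and this is immediate from $q\geq 2$. This is precisely why the paper omits the proof, and the argument above is included only for completeness.
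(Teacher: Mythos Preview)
Your proof is correct; both the telescoping/re-indexing argument and the differentiation argument are standard and valid here. The paper explicitly omits the proof of this lemma (stating only that it is elementary), so there is no approach to compare against---your write-up simply supplies the routine verification the authors chose to leave out.
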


\section{Classical groups of small dimension}

We start by introducing some more notation that will prove useful for the proof of Theorems~\ref{smalldimension} and~\ref{thrmnonsubspaceactions}. Let $|\Aut(G_0)|_{p'}$ denote the $p'$-part of  $|\Aut(G_0)|$ and set
\begin{equation*}
a(n,q):=1+\frac{\log(|\Aut(G_0)|_{p'})}{\log(\log(|\Aut(G_0)|_{p'})-1.1714)}.
\end{equation*}
Comprehensive information on $|G_0|$ and $|A|$ can be found in~\cite[Table~$5$, page~xvi]{ATLAS}. 
Observe that, once the Lie type of $G_0$ is fixed, $a(n,q)$ is indeed an explicit function of $n$ and $q$. Suppose that $G_0\ncong \PSL_2(4)$,  $\PSL_2(7)$,  and observe that $|\Aut(G_0)|_{p'}\geq 26$. By  Lemma~\ref{robin}, we have
\begin{equation}\label{eq:2s}
\omega(|\Aut(G_0)|)= 1+\omega(|\Aut(G_0)|_{p'})\leq 1+\frac{\log(|\Aut(G_0)|_{p'})}{\log(\log(|\Aut(G_0)|_{p'})-1.1714)}=a(n,q),
\end{equation}
where the summand $1$ corresponds to the prime divisor $p$ of $|\Aut(G_0)|$.

Now we prove a preliminary theorem that will simplify some of our arguments later and  will also provide a taste of the arguments used in the rest of the paper.
\begin{theorem}\label{smalldimension}
Let $G,G_0$ and $n$ be as in Notation~$\ref{generalnotation}$ with $n\leq 4$. Suppose that $G$ acts primitively on $\Omega$ and let $g\in G$. Then either $g$ has a regular cycle, or $(G_0,\Omega)$ is isomorphic to $(\Alt(5),
\{1,\ldots,5\})$, $(\Alt(6),\{1,\ldots,6\})$ or $(\Alt(8),\{1,\ldots,8\})$.
\end{theorem}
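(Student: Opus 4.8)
The plan is to reduce to a finite check by combining Lemma~\ref{basic}, Lemma~\ref{lemma:apeman}, and Theorem~\ref{LSthm}. By Lemma~\ref{basic} we may assume $|g|$ is square-free, and by Lemma~\ref{lemma:apeman} it suffices to prove $\SGO<1$ (so that $g$ has a regular cycle) in all but the listed cases. Write $|g|=r_1\cdots r_s$ with the $r_i$ distinct primes, so $s=\omega(|g|)\le\omega(|G|)\le\omega(|\Aut(G_0)|)$. For each prime $r\mid|g|$ the element $g^{|g|/r}$ is nontrivial, so if $G$ (in its action on $\Omega$) is \emph{not} one of the exceptions in~\cite[Table~$1$]{LS}, Theorem~\ref{LSthm} gives $\fpr_\Omega(g^{|g|/r})\le 4/(3q)$; summing over the $s$ primes yields $\SGO\le 4s/(3q)$, and this is $<1$ as soon as $q>4s/3$, i.e.\ $q> (4/3)\,a(n,q)$ using the bound~\eqref{eq:2s} of the previous section. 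Because $n\le 4$ is fixed and $|\Aut(G_0)|$ grows roughly like a fixed power of $q$, the function $a(n,q)$ grows only like a constant times $\log q/\log\log q$; hence the inequality $q>(4/3)a(n,q)$ holds for all but a bounded, explicitly computable list of small $q$ (for each of the finitely many Lie types with $n\le 4$, namely $\PSL_2$, $\PSL_3$, $\PSU_3$, $\PSL_4$, $\PSU_4$, $\PSp_4$, and the small orthogonal and exceptional-isomorphism cases). Handling the degenerate values $G_0\cong\PSL_2(4),\PSL_2(7)$ excluded before~\eqref{eq:2s} is an easy separate remark since those groups are isomorphic to small alternating/linear groups already covered.

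It then remains to deal with two families of leftover cases: (i) the finitely many small $q$ for each fixed Lie type of dimension $\le 4$ where the generic estimate $\SGO\le 4s/(3q)<1$ fails; and (ii) the actions $(G,\Omega)$ appearing in~\cite[Table~$1$]{LS} where $\fpr_\Omega(x)$ can exceed $4/(3q)$. For family (ii) we use Remark~\ref{remark1}: the third column of~\cite[Table~$1$]{LS} gives an exact formula for $\fpr_\Omega(x)$, so for each such $(G,\Omega)$ we substitute $x=g^{|g|/r}$ for the various primes $r\mid |g|$ and bound $\SGO$ directly; in almost all of these the refined formula still gives $\SGO<1$, the point being that the Table~$1$ exceptions are typically actions of very small degree (e.g.\ subspace actions on $1$- or $2$-spaces, or a handful of sporadic actions) where one can either still beat $1$ or recognise the action concretely. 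For family (i) and for the genuinely tight remaining cases, the groups involved are small enough that $(G,\Omega)$ is one of a short explicit list; here we either exhibit a regular cycle by hand (choosing a suitable $g$ and counting cycle lengths, as in the $\Alt(m)$ examples in the introduction) or, when no such $g$ works, we identify the action as one of $(\Alt(5),\{1,\dots,5\})$, $(\Alt(6),\{1,\dots,6\})$, $(\Alt(8),\{1,\dots,8\})$ via the exceptional isomorphisms $\PSL_2(4)\cong\PSL_2(5)\cong\Alt(5)$, $\PSp_4(2)'\cong\Alt(6)\cong\PSL_2(9)$, $\PSL_4(2)\cong\Alt(8)$, and $\PSL_2(7)\cong\PSL_3(2)$.

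The main obstacle I expect is the bookkeeping in family (i)/(ii): one must be careful that the element $g$ of square-free order is allowed to have $\omega(|g|)$ as large as $\omega(|\Aut(G_0)|)$, so for the smallest fields the crude sum $4s/(3q)$ genuinely exceeds $1$ and a more delicate argument is needed. The natural fix is to note that for small $q$ the \emph{possible} square-free orders $|g|$ are themselves severely constrained by $|G|$ (Lagrange), so in practice $s$ is far smaller than the worst-case $a(n,q)$; combined with the exact fixed-point-ratio data of~\cite[Table~$1$]{LS} and direct computation in the handful of remaining small groups (feasible in \textsc{Magma}~\cite{magma}), this closes every case and pins down exactly the three exceptional actions in the statement. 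The argument here is a template for the more involved analysis of larger-dimensional classical groups in the next two sections.
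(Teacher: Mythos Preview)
Your proposal is correct and follows essentially the same approach as the paper: bound $\SGO\le\omega(|g|)\cdot 4/(3q)\le a(n,q)\cdot 4/(3q)$ via Theorem~\ref{LSthm} and~\eqref{eq:2s}, reduce to a finite list of small $(n,q)$, replace the $4/(3q)$ bound by the amended bounds from~\cite[Table~1]{LS} for the exceptional actions (notably the infinite family $\PSL_2(q)$, and $\PSL_4(2)$, $\PSU_4(2)$), then sharpen $\omega(|g|)$ to $\max\{\omega(|g|):g\in\Aut(G_0)\}$ and finally verify the surviving handful of groups by explicit enumeration of primitive actions and conjugacy classes. One wording slip: in your final paragraph you speak of ``choosing a suitable $g$'' and ``when no such $g$ works''---the logic is the reverse, since $g$ is given and you must produce a regular cycle for \emph{every} conjugacy class representative; the paper states this cleanly as checking ``whether every conjugacy class representative has a regular orbit''.
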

\begin{proof}
Using the various isomorphisms between classical groups,  
we may assume that  $G_0$ is either $\PSL_2(q)$ with $q\geq 5$, or $\PSL_3(q)$ with $q\geq 3$, or $\PSL_4(q)$ with $q\geq 2$, or $\PSU_3(q)$ with $q\geq 3$, or $\PSU_4(q)$ with $q\geq 2$, or $\PSp_4(q)$ with $q\geq 4$. 

We start by explaining our strategy. Let $A = \Aut(G_0)$ so that, in particular, we may assume that $G_0\leq G\leq A$. 

For now we 
suppose that $G_0 \not \cong \PSL_2(q), \, \PSL_4(2)$, or $\PSU_4(2)$. In particular, from~\cite[Table~$1$]{LS}, we have $\fpr_\Omega(g^{|g|/r})\leq 4/(3q)$, for each prime $r$ dividing $|g|$. Thus from~\eqref{eq:2s} we get
\begin{eqnarray}\label{eq:neverend}
\SGO \leq \sum_{\substack{ r \mid |g| \\ r \, \text{prime}} }\frac{4}{3q}=\omega(|g|)\frac{4}{3q}\leq \omega(|A|)\frac{4}{3q}\leq a(n,q)\frac{4}{3q}.
\end{eqnarray}
In particular, if $4a(n,q)<3q$, then Lemma~\ref{basic} implies that $g$ has a regular cycle on $\Omega$. Now we check with a computer (implementing the function $a(n,q)$)  that there are only a finite number of pairs $(n,q)$ such that $4a(n,q)\geq 3q$. For each of the remaining groups, we compute the exact value of $\omega(|A|)$ and show that either $4\omega(|A|)<3q$ or $G_0$ is one of the groups in Table~\ref{tableexpsmall}.

When $G_0 \cong \PSL_2(q)$, $\PSL_4(2)$, or $\PSU_4(2)$, we substitute the upper bound $4/(3q)$ with the amended upper bound in~\cite[Table~$1$]{LS} and follow the calculation above. For each of these groups, we show that either $\SGO<1$ or $G_0$ is one of the groups in Table~\ref{tableexpsmall}.

The remaining groups $G_0$ in Table~\ref{tableexpsmall} are rather small and we can compute the exact value of $\max\{\omega(|g|)\mid g\in \Aut(G_0)\}$. Using this value in~\eqref{eq:neverend}  (in place of $a(n,q)$) and using the upper bound for $\fpr_\Omega(g^{|g|/r})$ in~\cite[Theorem~$1$]{LS}, we have $\SGO < 1$ and $g$ has a regular orbit,  or $G_0$ is isomorphic to one of $\PSL_2(5)$, $\PSL_2(7)$, $\PSL_2(9)$, $\PSL_4(2)$, $\PSL_4(4)$, $\PSU_4(2)$, $\PSU_4(4)$.

Finally, for each group $G_0$ in this short list, we compute for every $G$ with $G_0\leq G\leq \Aut(G_0)$ all the primitive permutation representations of $G$ and check whether every conjugacy class representative has a regular orbit: the only exceptions are the examples in the statement of this theorem. 
\end{proof}

\begin{table}
\begin{tabular}{|c|l|}\hline
Group $G_0$&Comments\\\hline
$\PSL_n(q)$&$(n,q)\in \{(2,5),(2,7),(2,8),(2,9),(2,11),(2,16),(2,19),$\\
&$\qquad (3,3),(3,4),(3,5),(4,2),(4,3),(4,4),(4,5),(4,8)\}$\\
$\PSU_n(q)$&$(n,q)\in \{(3,3),(3,4),(3,5),(4,2),(4,3),(4,4),(4,5),(4,8)\}$\\
$\PSp_{2m}(q)$&$(m,q)\in \{(2,4),(2,5)\}$\\\hline
\end{tabular}
\caption{Exceptional cases in the proof of~\Cref{smalldimension}}\label{tableexpsmall}
\end{table} 

\section{Non-subspace actions of classical groups}\label{non-subspace}
In studying actions of classical groups, it is rather natural to distinguish between those actions which permute the subspaces of the natural module and those which do not. The stabilizers of subspaces are generally rather large (every parabolic subgroup falls into this class) and therefore the fixed-point-ratio in these cases also tends to be rather large. As the culmination of an important series of papers~\cite{Tim1,Tim2,Tim3,Tim4}, Burness obtained remarkably good upper bounds on the fixed-point-ratio for each finite almost simple classical group in \textit{non-subspace actions}. For future reference, we first need to make the definition of non-subspace action precise.

\begin{definition}\label{def}{\rm Assume Notation~$\ref{generalnotation}$. A subgroup $H$ of $G$ is a \textit{subspace subgroup} if, for each maximal subgroup $M$ of $G_0$ that contains $H\cap G_0$,  one of the following conditions hold:
\begin{description}
\item[(a)]$M$ is the stabilizer in $G_0$ of a proper non-zero subspace $U$ of $V$, where $U$ is totally singular, or non-degenerate, or, if $G_0$ is orthogonal and $p=2$, a non-singular $1$-subspace ($U$ can be any subspace if $G_0=\PSL(V)$);
\item[(b)]$M=\O_{2m}^{\pm}(q)$ and $(G_0,p)=(\Sp_{2m}(q)',2)$.
\end{description}}
\end{definition}
A transitive action of $G$ on a set $\Omega$ is a \textit{subspace action} if the point stabilizer $G_\omega$ of the element $\omega\in \Omega$ is a subspace subgroup of $G$; \textit{non-subspace subgroups} and \textit{actions} are defined accordingly.

For the convenience of the reader we report~\cite[Theorem~$1$]{Tim1}. 
\begin{theorem}[{\cite[Theorem~$1$]{Tim1}}]\label{timthm}
Let $G$ be a finite almost simple classical group acting transitively and faithfully on a set $\Omega$ with point stabilizer $G_\omega\leq H$, where $H$ is a maximal non-subspace subgroup of $G$. Let $G_0$ be the socle of $G$. Then, for every $x\in G\setminus\{1\}$, we have \[\mathrm{fpr}_\Omega(x)<|x^G|^{-\frac{1}{2}+\frac{1}{n}+\iota},\]
 where $x^G$ is the conjugacy class of $x$ in $G$, $n$ is as defined in~\cite[Definition~2]{Tim1} and either $\iota=0$ or $(G_0,H,\iota)$ is listed in~\cite[Table~1]{Tim1}.
\end{theorem}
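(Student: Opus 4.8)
This is Burness's fixed-point-ratio bound for non-subspace actions, imported here from~\cite{Tim1}; we only sketch how one would prove it. By Lemma~\ref{obvious3} one has $\fpr_\Omega(x)=|x^G\cap H|/|x^G|$, so the problem reduces to two independent estimates --- a lower bound for $|x^G|$ and an upper bound for $|x^G\cap H|$ --- whose quotient must be shown to be at most $|x^G|^{-1/2+1/n+\iota}$. The first step is to invoke the Aschbacher--Kleidman--Liebeck classification of the maximal subgroups of a finite almost simple classical group: since $H$ is non-subspace, after discarding the usual small exceptions it lies in one of the geometric Aschbacher families $\mathcal{C}_2$ (imprimitive), $\mathcal{C}_3$ (field extension), $\mathcal{C}_4$ and $\mathcal{C}_7$ (tensor and tensor-induced decompositions), $\mathcal{C}_5$ (subfield subgroups), $\mathcal{C}_6$ (normalisers of symplectic-type $r$-groups), $\mathcal{C}_8$ (classical subgroups), or else in the family $\mathcal{S}$ of almost simple subgroups acting irreducibly on $V$. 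One also reduces at the outset to $x$ of prime order, the bound for general $x$ then following by a short argument.

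For the lower bound on $|x^G|$ I would use the standard estimates on conjugacy class sizes in groups of Lie type. Writing $|x^G|=|G|/|C_G(x)|$ and passing to the ambient simple algebraic group $\bar G$, the dimension of the class $x^{\bar G}$ equals $\dim\bar G-\dim C_{\bar G}(x)$, and a Lang--Steinberg point count turns this into $|x^G|\geq c\, q^{\dim x^{\bar G}}$ for an absolute constant $c>0$. The element of smallest class dimension --- a transvection or long-root element, or a minimal-support semisimple element --- has $\dim x^{\bar G}$ of order $n-1$, where $n$ is as in~\cite[Definition~2]{Tim1} (essentially the dimension of the natural module). Hence $|x^G|\geq q^{n-1}/c'$ always holds, and it is this reservoir that ultimately produces the correction term $1/n$ in the exponent.

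The heart of the proof is the upper bound on $|x^G\cap H|$, where the argument splits according to the size of $H$. When $|H|$ is small relative to $|G|$ --- which covers $\mathcal{C}_5$, $\mathcal{C}_6$ and the bulk of $\mathcal{S}$ --- the trivial bound $|x^G\cap H|\leq|H|$ already beats $|x^G|^{1/2-1/n}$, because $|H|$ is then smaller than $|G|^{1/2-1/n}$ up to controllable factors. When $H$ is large --- the imprimitive $\mathcal{C}_2$ subgroups $(\GL_{n/t}(q)\wr\Sym(t))\cap G$, the field-extension $\mathcal{C}_3$ subgroups, and the subsystem $\mathcal{C}_8$ subgroups --- the trivial bound is too weak, and one instead decomposes $x^G\cap H$ into $H$-classes: a $G$-conjugate of $x$ lying in $H$ must respect the geometric structure that $H$ stabilises, so one bounds both the number of relevant $H$-classes and the size of each by reducing to class-size estimates in the smaller classical or linear factors of $H$, applying the lower-bound machinery of the previous paragraph inside those factors. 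Combining the two estimates and simplifying yields $\fpr_\Omega(x)<|x^G|^{-1/2+1/n}$ in all but finitely many configurations; the residual cases --- small-support unipotent elements inside a $\mathcal{C}_2$, $\mathcal{C}_3$ or $\mathcal{C}_8$ subgroup, and a handful of elements with an unusually large fixed space --- are exactly the triples $(G_0,H,\iota)$ recorded with $\iota>0$ in~\cite[Table~1]{Tim1}, and are dispatched by direct computation of $|x^G\cap H|$ and $|x^G|$.

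I expect the main obstacle to be precisely the upper bound on $|x^G\cap H|$ for the large subgroups, and within that two points in particular: unipotent classes in characteristic $2$ and small rank, where the centraliser structure is intricate and the crude $q$-power estimates are not sharp enough to deliver the stated exponent; and the family $\mathcal{S}$, where there is no uniform geometric description of $H$ and one is forced into a case-by-case analysis of the cross-characteristic and defining-characteristic representations of the almost simple candidates. It is the book-keeping needed to make the list of genuine exceptions in~\cite[Table~1]{Tim1} simultaneously complete and minimal --- rather than any single estimate --- that makes the full argument long.
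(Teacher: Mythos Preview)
Your proposal correctly identifies that this theorem is imported from~\cite{Tim1} and is not proved in the present paper; indeed the paper offers no proof whatsoever for this statement, simply quoting it for later use. Your sketch of Burness's argument is a faithful summary of the strategy actually used across~\cite{Tim1,Tim2,Tim3,Tim4}: the reduction to prime-order elements, the formula $\fpr_\Omega(x)=|x^G\cap H|/|x^G|$, the case division along the Aschbacher families, and the separate treatment of small versus large $H$ are all accurate, as is your identification of the $\mathcal{C}_2$, $\mathcal{C}_3$, $\mathcal{S}$ families and low-characteristic unipotents as the delicate cases.
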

In Theorem~\ref{timthm}, apart from the two rather natural exceptions $\PSL_3(2)$ and $\PSp_4(2)'$ (where $n=2$), $n$ is exactly as in Notation~$\ref{generalnotation}$.

The upper bound in Theorem~\ref{timthm} is quite sharp when $n$ is large and is extremely useful for our application. However, for small values of $n$,  Theorem~\ref{timthm} loses all of its power. For instance, when $G_0=\PSL_4(q)$, we see from~\cite[Table~$1$]{Tim1} that $\iota=1/4$ and hence the upper bound in Theorem~\ref{timthm} only says $\fpr_\Omega(x)\leq |x^G|^0=1$. In these cases, we will often revert to the bounds in Theorem~\ref{LSthm}. However, we point out that recently Burness and the first author~\cite{BG} have strengthened Theorem~\ref{timthm} for linear groups of very small rank. 

We are now ready to make our first important contribution towards the proof of Theorem~\ref{thrm:AS}. 
\begin{theorem}\label{thrmnonsubspaceactions}
Assume Notation~$\ref{generalnotation}$. Let $G$ be acting primitively with a non-subspace action on a set $\Omega$ and let $g\in G$. Then either $g$ has a regular cycle, or $(G_0,\Omega)$ is isomorphic to  $(\Alt(5),\{1,\ldots,5\})$, $(\Alt(6),\{1,\ldots,6\})$ or $(\Alt(8),\{1,\ldots,8\})$.
\end{theorem}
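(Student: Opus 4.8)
The plan is to combine Burness's fixed-point-ratio bound in Theorem~\ref{timthm} with the counting device $\SGO$ from Lemma~\ref{lemma:apeman}, exactly as in the proof of Theorem~\ref{smalldimension}. By Lemma~\ref{basic} it suffices to treat $g$ of square-free order, so $\SGO = \sum_{r \mid |g|,\, r \text{ prime}} \fpr_\Omega(g^{|g|/r})$ is a sum of $\omega(|g|)$ terms. First I would bound $\omega(|g|)$ by $\omega(|\Aut(G_0)|) \le a(n,q)$ using~\eqref{eq:2s} and Lemma~\ref{robin}, and bound each summand $\fpr_\Omega(g^{|g|/r})$ from above using Theorem~\ref{timthm}. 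The key point is that for each prime-power element $h = g^{|g|/r}$ one needs a lower bound on $|h^G|$; the standard tool (as in Burness's papers and in~\cite{GPSreduction}) is that a nontrivial semisimple or unipotent element of a classical group of dimension $n$ over $\mathbb{F}_{q_0}$ has centralizer of bounded order, giving $|h^G| \gg q^{cn}$ for an explicit constant $c$ (roughly $c = n-1$ or better, after accounting for the worst case, e.g.\ a transvection). Feeding this into Theorem~\ref{timthm} yields $\fpr_\Omega(g^{|g|/r}) < q^{-f(n)}$ for a function $f(n)$ that grows linearly in $n$ once $\iota$ is controlled, so that $\SGO < a(n,q)\, q^{-f(n)}$, and for $n$ and $q$ large this is $<1$.

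Next I would isolate the finitely many small pairs $(n,q)$ (and the finitely many socles $G_0$) for which the crude inequality $a(n,q)\, q^{-f(n)} < 1$ fails, together with the exceptional triples $(G_0, H, \iota)$ listed in~\cite[Table~1]{Tim1} where $\iota > 0$ damages the bound (notably the low-rank linear groups $\PSL_2, \PSL_3, \PSL_4$ and $\PSp_4$, for which one reverts to Theorem~\ref{LSthm} and the bound $\fpr_\Omega(x) \le 4/(3q)$ away from the genuine exceptions in~\cite[Table~1]{LS}). For the low-dimensional cases $n \le 4$ I would simply invoke Theorem~\ref{smalldimension}, which already resolves all primitive actions there and produces exactly the three exceptional pairs $(\Alt(5),\{1,\dots,5\})$, $(\Alt(6),\{1,\dots,6\})$, $(\Alt(8),\{1,\dots,8\})$ in the conclusion; note these arise from the subspace/natural actions rather than genuine non-subspace actions, so in the strictly $n \ge 5$ non-subspace setting I expect no exceptions at all, but they must be carried along to state the theorem uniformly. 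For the remaining finite list of moderate groups (analogues of Table~\ref{tableexpsmall}) one computes the exact value of $\max\{\omega(|h|) : h \in \Aut(G_0)\}$, uses the exact $\fpr$ formulas from~\cite[Table~1]{LS} or the refined bounds of~\cite{BG}, and checks $\SGO < 1$ directly; any residual handful of genuinely small groups is finished by explicit computation over all $G$ with $G_0 \le G \le \Aut(G_0)$ and all their primitive non-subspace representations.

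The main obstacle I anticipate is the bookkeeping in the ``medium range'': the interval of $(n,q)$ where $n$ is too large to appeal to Theorem~\ref{smalldimension} but still small enough (say $5 \le n \le $ some explicit bound, or $n$ large but $q$ tiny) that the exceptions $\iota > 0$ in~\cite[Table~1]{Tim1} prevent the clean power-saving. For these one cannot use the generic estimate $\fpr < |x^G|^{-1/2 + 1/n + \iota}$ as a genuine saving, and instead must either (i) use the sharper $C_1$--$C_8$ case-by-case fixed-point-ratio estimates from Burness's series~\cite{Tim1,Tim2,Tim3,Tim4}, distinguishing subgroup collections, or (ii) exploit that $\iota > 0$ occurs only for a short explicit list of $H$, handling each such $H$ separately with a tailored centralizer bound. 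A secondary subtlety is that $g$ itself may have small order while $g^{|g|/r}$ is an involution or element of order $p$ with comparatively large fixed-point set (a transvection being the extreme case); one must verify that even summing $\omega(|g|)$ such ``bad'' terms stays below $1$, which is where the logarithmic growth of $\omega$ against the polynomial growth of $|x^G|$ does the real work. Modulo this casework, the argument is a direct adaptation of the template already exhibited in the proof of Theorem~\ref{smalldimension}.
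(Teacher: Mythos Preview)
Your proposal is correct and follows essentially the same template as the paper's proof. Both arguments reduce to $n\ge 5$ via Theorem~\ref{smalldimension}, bound $\omega(|g|)$ by $a(n,q)$, feed a uniform upper bound for $\fpr_\Omega$ into $\SGO$, isolate a finite list of residual $(n,q)$, refine with the exact value of $\omega(|\Aut(G_0)|)$ and then $\max\{\omega(|h|):h\in\Aut(G_0)\}$, and finish the stubborn cases by direct computation.

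The one substantive difference worth noting is how the lower bound on $|x^G|$ is obtained. You propose using centralizer estimates to get $|h^G|\gg q^{cn}$; the paper instead uses the observation $|x^G|\ge m(G_0)$, the minimal faithful permutation degree of $G_0$, which is tabulated explicitly in~\cite[Table~5.2.A]{KL} (with the corrections in~\cite{Vav4}). This yields a single clean function $t(n,q)=\min\{t(G_0),\,m(G_0)^{-1/2+1/n+\iota}\}$ combining the Liebeck--Saxl and Burness bounds, so that $\SGO\le a(n,q)\,t(n,q)$ can be checked mechanically. Your centralizer approach would give comparable (sometimes sharper) bounds but requires more case analysis; the paper's choice trades sharpness for uniformity and ease of implementation. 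The paper also has to treat $\POmega_8^+(3)$ and $\PSL_6(5)$ with the full strength of Theorem~\ref{timthm} (computing $|x_r^G|$ for each prime $r$ rather than using $m(G_0)$), which is exactly the ``medium-range bookkeeping'' you anticipated.
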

\begin{proof}
As in Theorem~\ref{smalldimension}, the proof  makes heavy use of a computer. 
Observe that by Theorem~\ref{smalldimension}, we may assume that $n \geq 5$.

Let $m(G_0)$ denote the minimal degree of a non-trivial transitive permutation representation of $G_0$. Clearly, for every $x\in G\setminus \{1\}$, we have
\begin{equation*}
|x^{G}|=|G:\cent G x|\geq |G_0\cent G x:\cent G x|=|G_0:\cent {{G_0}}x|\geq m(G_0).
\end{equation*}
The explicit value for $m(G_0)$ (as  a function of $n$ and $q$) can be found in ~\cite[Table
~$5.2.A$]{KL}. (We note that the rows
corresponding to the classical groups $\POmega_{2m}^+(q)$ and
$\PSU_{2m}(2)$ in~\cite[Table~$5.2.A$]{KL} are incorrect as brilliantly observed by Vasil$'$ev and Mazurov in~\cite{Vav4}. For these rows we refer to the amended values in~\cite{Vav4}. A table listing $m(G_0)$ for each non-abelian simple group and taking into account all the known corrections can be found in~\cite{GMPS}).

Denote by $t(G_0)$ the upper bound for $\fpr_\Omega(x)$ obtained in Theorem~\ref{LSthm} (so, typically $t(G_0)=4/(3q)$ and the exceptional cases, together with the corresponding amended upper bound, are found in~\cite[Table~$1$]{LS}; see Remark~\ref{remark1}). Let
\begin{equation*}
  t(n,q):=\min\left\{t(G_0),m(G_0)^{-\frac{1}{2}+\frac{1}{n}+\iota}\right\},
\end{equation*}
and observe that, for each almost simple classical group, $t(n,q)$ is an explicit function of $n$ and $q$.

Let $A = \Aut(G_0)$. By definition of $t(n,q)$ and Theorems~\ref{LSthm} and~\ref{timthm}, we have $\fpr_\Omega(g^{|g|/r})\leq t(n,q)$ for each prime divisor $r$ of $|g|$. Thus by~\eqref{eq:2s} we get
\begin{eqnarray}\label{eq:refrefref}
\SGO &\leq& \sum_{\substack{ r \mid |g| \\ r \, \text{prime}}}t(n,q)=\omega(|g|)t(n,q)\leq \omega(|A|)t(n,q)\leq a(n,q)t(n,q).
\end{eqnarray}
In particular, if $a(n,q)t(n,q)<1$, then Lemma~\ref{basic} implies that $g$ has a regular cycle  in its action on $\Omega$. Using a computer we find the pairs $(n,q)$ such that $a(n,q)t(n,q)\geq 1$. For each of these remaining groups, we compute the exact value of $\omega(|A|)$ and we find that either $\omega(|A|)t(n,q) <1$ or $G_0$ is one of the groups in Table~\ref{tableexp}.

\begin{table}
\begin{tabular}{|c|l|}\hline
Group $G_0$&Comments\\\hline
$\PSL_n(q)$&$(n,q)\in \{(5,2),(5,3),(5,4),(6,2),(6,3),(6,4),(6,5),(6,7),(6,8),$\\
&$\qquad (6,9),(6,11),(7,2),(8,2),(8,3),(10,2)\}$\\
$\PSU_n(q)$&$(n,q)\in \{ (6,2),(6,3)\}$\\
$\PSp_{2m}(q)$&$(m,q)\in \{(3,2),(3,3),(3,4),(3,5),(3,7),(3,8),(3,9),(4,2),(5,2)\}$\\
$\POmega_{2m+1}(q)$&$(m,q)=(3,3)$\\
$\POmega_{2m}^+(q)$&$(m,q)\in \{(4,2),(4,3),(5,2)\}$\\
$\POmega_{2m}^-(q)$&$(m,q)\in \{(4,2),(4,3),(4,4),(5,2)\}$\\\hline
\end{tabular}
\caption{Exceptional cases in the proof of~\Cref{thrmnonsubspaceactions}}\label{tableexp}
\end{table} 

Using \texttt{magma} \cite{magma}, for each  $G_0$ in Table~\ref{tableexp}, we can compute $\max\{\omega(|g|)\mid g\in \Aut(G_0)\}$. By using this value as an upper bound for $\omega(|g|)$ in~\eqref{eq:refrefref} and by arguing as in the previous paragraph, we see that either $g$ has a regular orbit on $\Omega$ or $G_0$ is one of the following groups
\begin{align*}
&\mathrm{PSL}_6(2),\,\mathrm{PSL}_6(3),\,\mathrm{PSL}_6(4),\,\mathrm{PSL}_6(5),\,\mathrm{PSU}_6(2),\,
\mathrm{PSp}_6(2),\,\mathrm{PSp}_6(3),\,\mathrm{P}\Omega_8^+(2),\,\mathrm{P}\Omega_8^+(3),\,\mathrm{P}\Omega_8^-(2).
\end{align*}

For $G_0 = \POmega_8^{+}(3)$ we use the full strength of Theorem~\ref{timthm}: for each prime divisor $r$ of $|g|$ we have $\fpr_{\Omega}(g^{|g|/r}) \le |x_r^G|^{-1/2+1/n+\iota}$ where $x_r:=g^{|g|/r}$. In particular, using~\cite[Table~1]{Tim1}, for any non-subspace action we have $\iota \le 0.219$ and thus $\fpr_{\Omega}(g^{|g|/r}) \le |x_r^G|^{-0.156}$. Let $\alpha(g,r) := |x_r^G|^{-0.156}$. We can check in \texttt{magma} that for every almost simple $G$ with socle $\POmega_8^{+}(3)$, and every $g\in G$ of square-free order, the  sum $\sum \alpha(g,r)$ (over all prime divisors $r$ of $|g|$)  is less than $1$. Therefore the case $G_0= \POmega^{+}_8(3)$ is eliminated. The group $G_0 = \PSL_6(5)$ is dealt with in exactly the same fashion.

Finally, for each remaining group $G_0$, we construct in \texttt{magma} all of  the primitive permutation representations of every almost simple group $G$ with socle $G_0$ and check directly that every conjugacy class representative has a regular cycle. 
\end{proof}

\section{A reduction for subspace actions}\label{other reduction}

In view of Theorems~\ref{smalldimension} and~\ref{thrmnonsubspaceactions} we may now consider only subspace actions of almost simple classical groups $G$  and we may assume that $n\geq 5$.

\begin{remark}\label{remrem2}{\rm 
Let $G$ be an almost simple group as above acting on the set $\Omega$ and let $\alpha\in \Omega$. Now there are two main cases to consider: either $G_\alpha\cap G_0$ is a maximal subgroup of $G_0$, or $G_\alpha\cap G_0$ is not maximal in $G_0$. In the first case, $G_\alpha\cap G_0$ is one of the subgroups given in Definition~\ref{def} and hence we may identify the action of $G$ on $\Omega$ with the natural extension to $G$ of the action of $G_0$ on the coset space $G_0/(G_\alpha\cap G_0)$. (We give an example of this case: $G=\PGL_n(q)$, $G_0=\PSL_n(q)$ and $G_\alpha\cap G_0$ is a maximal parabolic subgroup of $G_0$ stabilizing a $k$-dimensional subspace of the natural module $V=\mathbb{F}_q^n$. Clearly, we may identify the action of $G$ on $\Omega$ with the action of $G$ on the $k$-dimensional subspaces of $V$.) In particular, in this first case the action of $G$ is perfectly understood. 

Now suppose that $G_\alpha\cap G_0$ is not maximal in $G_0$ and let $M$ be a maximal subgroup of $G_0$ with $G_\alpha\cap G_0<M$. Clearly $\norm {{G}}{{G_\alpha\cap G_0}}=G_\alpha\nleq \norm G M$ and hence (using the terminology in~\cite[page~$66$]{KL}) the subgroup $G_\alpha\cap G_0$ is a $G$-\textit{novelty}. All such novelties were classified for $n\geq 13$ in~\cite[Table~$3.5$H]{KL}; we will make use of this table to identify the group $G_\alpha$ and the action of $G$ on $\Omega$.}
\end{remark}

Next, we state a definition, which will play a crucial role in this paper.
\begin{definition}\label{def:char}{\rm
Let $G$ be a finite group, and let $\Irr(G)$ denote the set of complex irreducible characters of $G$ and let $\langle\,,\,\rangle_G$ denote the natural inner-product on the class functions of $G$. Given complex characters $\pi$ and $\pi'$  of $G$, we write $\pi\leq \pi'$ if, for each $\eta\in \Irr(G)$, we have $\langle \pi,\eta\rangle_G\leq \langle \pi',\eta\rangle_G$. In other words, writing  \[\pi=\sum_{\eta\in \Irr(G)}a_\eta \eta\quad\textrm{and}\quad \pi'=\sum_{\eta\in \Irr(G)}a_\eta'\eta,\]  we have $\pi \le \pi'$ if and only if  $a_\eta\leq a_\eta'$ for every $\eta \in \Irr(G)$.
}
\end{definition}

Given a subgroup $H$ of $G$ and a character $\pi$ of $G$, we write $\pi_{|H}$ for the restriction of $\pi$ to $H$.

\begin{lemma}\label{characterlemma}Let $G$ be a finite group with two permutation representations on the finite sets $\Omega_1$ and $\Omega_2$. For $i\in \{1,2\}$, let $\pi_{i}$ be the permutation character for the action of $G$ on $\Omega_i$, and suppose that $\pi_{1}\leq \pi_{2}$. If $g \in G$ has $\ell$ cycles of length $|g|$ on $\Omega_1$, then $g$ has at least $\ell$ cycles of length $|g|$  on $\Omega_2$.
\end{lemma}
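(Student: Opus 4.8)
The key observation is that the number of cycles of length $|g|$ that $g$ has on a $G$-set $\Omega$ is a quantity that can be read off from the cyclic-subgroup decomposition of $\Omega$ under $\langle g\rangle$, and that this quantity should be recoverable from the permutation character in a way that is \emph{monotone} in the character. The plan is to reduce everything to the cyclic group $\langle g\rangle$ and count orbits of a fixed ``regular type'', then express that count via inner products of the (restricted) permutation characters with an appropriate class function on $\langle g\rangle$.

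First I would restrict the permutation representations to $\langle g\rangle$. A cycle of $g$ of length $|g|$ on $\Omega_i$ is precisely a regular orbit of $\langle g\rangle$ on $\Omega_i$; so if $\pi_i$ is the permutation character of $G$ on $\Omega_i$, then the number of such cycles is the multiplicity of the regular representation of $\langle g\rangle$ as a summand of $(\pi_i)_{|\langle g\rangle}$. Equivalently, writing $C=\langle g\rangle$ and $\rho_C$ for the regular character of $C$, this multiplicity equals $\langle (\pi_i)_{|C},\lambda\rangle_C$ for any fixed faithful linear character $\lambda$ of $C$ (the multiplicity of $\lambda$ in $(\pi_i)_{|C}$ is the same for every faithful $\lambda$, because a $C$-set is a disjoint union of transitive $C$-sets $C/D$ with $D\le C$, and $C/D$ contains $\lambda$ with multiplicity $1$ if $D=1$ and $0$ otherwise). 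So the cycle count on $\Omega_i$ is exactly $\langle (\pi_i)_{|C},\lambda\rangle_C$.

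Second, I would show that the partial order $\pi_1\le\pi_2$ on characters of $G$ is preserved by restriction, i.e. $(\pi_1)_{|C}\le(\pi_2)_{|C}$ as characters of $C$, and that if $\alpha\le\beta$ are characters of $C$ then $\langle\alpha,\eta\rangle_C\le\langle\beta,\eta\rangle_C$ for \emph{every} $\eta\in\Irr(C)$ — the latter being immediate from Definition~\ref{def:char}. For the restriction step: write $\pi_2-\pi_1=\sum_{\eta\in\Irr(G)}(a'_\eta-a_\eta)\eta$ with all coefficients $a'_\eta-a_\eta\ge 0$; this is a genuine character of $G$, hence its restriction $(\pi_2)_{|C}-(\pi_1)_{|C}$ to $C$ is a genuine character of $C$, so all its multiplicities against $\Irr(C)$ are nonnegative, which is exactly $(\pi_1)_{|C}\le(\pi_2)_{|C}$. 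Combining with the previous paragraph, applied to $\eta=\lambda$, gives
\[
\ell=\langle(\pi_1)_{|C},\lambda\rangle_C\ \le\ \langle(\pi_2)_{|C},\lambda\rangle_C=(\text{number of cycles of length }|g|\text{ of }g\text{ on }\Omega_2),
\]
which is the claim.

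\textbf{Main obstacle.} The only genuinely delicate point is the bookkeeping identity ``number of regular $\langle g\rangle$-orbits $=$ multiplicity of a faithful linear character of $\langle g\rangle$ in the restricted permutation character.'' I expect this to be routine — it follows from decomposing a $C$-set into transitive pieces and noting that $C/D$ with $D\ne 1$ contributes $0$ to the $\lambda$-multiplicity while $D=1$ contributes $1$ — but it is the place where one must be careful, since it is crucial that $\lambda$ be \emph{faithful}: a non-faithful linear character of $C$ would pick up contributions from non-regular orbits as well, and the monotone-restriction argument alone would not isolate the count we want. Everything else (monotonicity under restriction, and monotonicity of inner products, which is the very definition of $\le$) is formal.
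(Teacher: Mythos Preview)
Your proof is correct and follows essentially the same approach as the paper's: restrict to $C=\langle g\rangle$, observe that $\pi_1\le\pi_2$ is preserved under restriction (since $\pi_2-\pi_1$ is a genuine character), and read off the number of regular $C$-orbits from the restricted permutation character. The only cosmetic difference is that the paper packages the orbit count via the regular character $\rho$ of $C$ (citing \cite[Corollary~2.5]{SZ}, which says that $g$ has $\ell$ regular cycles on $\Omega$ if and only if $\ell\rho\le\pi_{|C}$), whereas you compute the same number more directly as the multiplicity $\langle(\pi_i)_{|C},\lambda\rangle_C$ of a single faithful linear character $\lambda$; these are equivalent since the minimum over $\chi\in\Irr(C)$ of $\langle(\pi_i)_{|C},\chi\rangle_C$ is attained at any faithful $\lambda$.
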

\begin{proof}
The proof of this result is somehow implicit in~\cite[Theorem~$3.4$]{SZ}. Let $g\in G$, let $\ell$ be the number of cycles of $g$ of length $|g|$ on $\Omega_1$ and let $\rho$ be the regular character of the cyclic group $\langle g\rangle$.

By~\cite[Corollary~$2.5$]{SZ}, we see that $\ell\rho\leq (\pi_1)_{|\langle g\rangle}$. Since $\pi_1\leq \pi_2$, we have $\ell\rho\leq (\pi_2)_{|\langle g\rangle}$. Now another application of~\cite[Corollary~$2.5$]{SZ} implies that $g$ has at least $\ell$ orbits of length $|g|$ on $\Omega_2$.
\end{proof}

In a nutshell, Lemma~\ref{characterlemma} shows that in order to prove Theorem~\ref{thrm:AS} we need only consider those actions that have  permutation character that is minimal with respect to the partial ordering in Definition~\ref{def:char}.

In the following, we use the term ``totally singular'' instead of separating into ``totally isotropic'' or ``totally singular'' subspaces according to the type of $V$. We use  the term ``non-singular'' to mean having zero radical. For the benefit of the reader we report here~\cite[Corollary~$1.2$]{FGK}.
\begin{theorem}[{\cite[Corollary~$1.2$]{FGK}}]\label{FGK}
Let $G$ be an almost simple classical group with natural
module $V$ of dimension $n$. If $G$ is linear, assume that $G$ does not contain a
graph automorphism. Let $k\in \{2, \ldots, n-1\}$, and let $K$ be the stabilizer of a non-degenerate or totally singular $k$-subspace of $V$. Let $P$ be the stabilizer of a
singular $1$-subspace of $V$. Then $1_P^G\leq 1_K^G$
unless one of the following holds.
\begin{description}
\item[(1)] $K$ is the stabilizer of a maximal totally singular subspace of $V$.
\item[(2)] $K$ is the stabilizer of a $2$-subspace containing no singular points or the
orthogonal complement of such. 
\end{description}
\end{theorem}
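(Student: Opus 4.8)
The plan is to convert the character inequality into a statement about submodules of permutation modules, and then into the full--row--rank of a geometric incidence matrix. By Frobenius reciprocity, $1_P^G\le 1_K^G$ in the sense of Definition~\ref{def:char} is equivalent to $\dim\eta^P\le\dim\eta^K$ for every $\eta\in\Irr(G)$, and hence --- since $\mathbb{C}G$ is semisimple --- to the existence of an injective $G$-module homomorphism $\mathbb{C}[G/P]\hookrightarrow\mathbb{C}[G/K]$. Writing $\Omega_P$ for the set of singular $1$-subspaces of $V$ and $\Omega_K$ for the set of $k$-subspaces of $V$ of the prescribed type (totally singular, or non-degenerate), the obvious candidate is the incidence map
\[
\phi\colon\ \mathbb{C}\Omega_P\to\mathbb{C}\Omega_K,\qquad \langle v\rangle\longmapsto\sum_{\substack{W\in\Omega_K\\ \langle v\rangle\subseteq W}}W ,
\]
which is visibly a $G$-homomorphism, and is nonzero once $k\ge 2$ because every singular point lies in some subspace of either type. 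So the whole problem is to decide when $\phi$ is injective and, when it is not, to check that no injection of $\mathbb{C}\Omega_P$ into $\mathbb{C}[G/K]$ exists at all.

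The injectivity of $\phi$ I would reduce to an eigenvalue computation, using that the Hecke algebra of $(G,P)$ is commutative: the action of a classical group on singular $1$-subspaces is multiplicity-free (the relevant graph is the collinearity graph of the corresponding polar or projective space, which is strongly regular, so its adjacency algebra has dimension at most three), and therefore $1_P^G=\sum_\eta\eta$ is a sum of \emph{distinct} irreducibles. Consequently the positive semidefinite $G$-invariant operator $N:=\phi^{\ast}\phi$ on $\mathbb{C}\Omega_P$ acts as a scalar $\lambda_\eta\ge 0$ on the $\eta$-isotypic line, and $\phi$ is injective if and only if every $\lambda_\eta$ is strictly positive. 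Since the entry $N_{\langle v\rangle,\langle w\rangle}$ counts the $k$-subspaces of the given type containing both points, it depends only on the relative position of $\langle v\rangle$ and $\langle w\rangle$; hence $N$ is a prescribed $\mathbb{Z}$-combination of the adjacency matrices of the associated scheme (the Grassmann scheme in the linear case, the polar/dual-polar schemes otherwise), and its eigenvalues $\lambda_\eta$ are explicit rational functions of $q$ (and of $n,k$) built from Gaussian binomial coefficients. Locating the finitely many zeros of these eigenvalue formulas, case by case over the possible types of $G_0$ and with the usual care for orthogonal groups in characteristic $2$, is exactly what isolates the exceptional configurations: in the linear case one recovers the known fact that the inclusion matrix of points versus $k$-flats of $\mathrm{PG}(n-1,q)$ has full row rank for all $1\le k\le n-1$ (a $q$-analogue of the inclusion-matrix theorems of Gottlieb and Wilson), consistent with the hypothesis that $G$ contains no graph automorphism; for totally singular $k$-spaces an eigenvalue vanishes precisely when $k$ equals the Witt index, giving case~(1); and for non-degenerate $k$-spaces the only vanishing occurs at the anisotropic-type $2$-spaces and their orthogonal complements of case~(2).

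Finally, for the two exceptional families one must show the failure is genuine, i.e.\ that \emph{no} $G$-embedding $\mathbb{C}\Omega_P\hookrightarrow\mathbb{C}[G/K]$ exists, not merely that this particular $\phi$ degenerates. Since each constituent of $1_P^G$ occurs with multiplicity one, it suffices to exhibit a single irreducible constituent $\eta$ of $1_P^G$ that does not occur in $1_K^G$; the vanishing eigenvalue $\lambda_\eta=0$ picks out the natural candidate, and a short character-value check --- or, more systematically, a count of the (very few) $G$-orbits on pairs (point, $k$-space) in these small-$k$ or maximal-totally-singular situations, so that $\mathrm{Hom}_G(\mathbb{C}\Omega_P,\mathbb{C}[G/K])$ is spanned by a handful of incidence maps whose $\eta$-components are computed one at a time --- confirms that $\eta$ is absent from $1_K^G$. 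Alternatively one could bypass $\phi$ entirely and simply compare the explicitly known lists of irreducible constituents of $1_P^G$ and $1_K^G$. In either route the main obstacle is the uniformity of the middle step: pushing the eigenvalue (or constituent-list) computation through the linear, unitary, symplectic and both orthogonal families --- including the non-degenerate case, where the point/$k$-space incidence is governed by a less standard scheme than the dual polar space --- and certifying that the complete set of zeros is precisely the two families (1) and (2) and nothing else.
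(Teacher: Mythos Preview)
The paper does not prove this theorem at all: it is quoted verbatim as \cite[Corollary~1.2]{FGK} (Frohardt--Guralnick--Magaard), introduced with the sentence ``For the benefit of the reader we report here~\cite[Corollary~$1.2$]{FGK}'', and then used as a black box in the reduction of Proposition~\ref{reduction}. So there is no ``paper's own proof'' to compare against.

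That said, your sketch is very much in the spirit of the original source --- the title of \cite{FGK} is \emph{Incidence matrices, permutation characters, and the minimal genus of a permutation group}, and the argument there does proceed by showing that the point--versus--$k$-subspace incidence matrix has full row rank over $\mathbb{C}$, which is exactly your map $\phi$. Your reduction via Frobenius reciprocity to an embedding of permutation modules, and then to the non-vanishing of the eigenvalues of $\phi^{\ast}\phi$ on the (multiplicity-free) module $\mathbb{C}\Omega_P$, is the right framework. One small correction: the rank of the action of $G$ on singular $1$-spaces is $2$ in the linear case and $3$ in the genuinely polar cases, so ``strongly regular'' is accurate there, but you should not lean on that phrasing for the linear case where the action is $2$-transitive.

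What you have written is a plausible plan rather than a proof: the entire content lies in the eigenvalue computation you defer (``locating the finitely many zeros of these eigenvalue formulas, case by case''), and in the non-degenerate case the relevant association scheme is not one of the classical $P$-polynomial schemes, so the bookkeeping is genuinely nontrivial. If you intend to supply a self-contained argument you will have to carry that out; otherwise, citing \cite{FGK} as the present paper does is the appropriate move.
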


Theorems~\ref{thrmnonsubspaceactions},~\ref{FGK}, Lemma~\ref{characterlemma} and Remark~\ref{remrem2} imply the following reduction.
\begin{proposition} \label{reduction}
Assume Notation~$\ref{generalnotation}$. Theorem~$\ref{thrm:AS}$ holds if it holds for the following cases of subspace $G$-actions on $\Omega$:
\begin{description}
\item[Case~$(i)$]$\Omega$ consists of totally singular $1$-subspaces; 
\item[Case~$(ii)$]$\Omega$ consists of non-degenerate $1$-subspaces; 
\item[Case~$(iii)$]$\Omega$ consists of  maximal totally singular subspaces; 
\item[Case~$(iv)$]$\Omega$ consists of non-degenerate $2$-subspaces containing no singular points; 
\item[Case~$(v)$]$G_0=\PSL_n(q)$ and $G$ contains a graph automorphism; 
\item[Case~$(vi)$]$G_0=\Sp_{n}(q)'$ with $q$ even, and the stabilizer  in $G_0$ of a point is isomorphic to $\O_{n}^\pm(q)$; 
\item[Case~$(vii)$]$G_\alpha\cap G_0$ is a $G$-novelty and $G_\alpha$ is a subspace subgroup, where $G_\alpha$ is the stabilizer of the point $\alpha$.
\end{description}
 \end{proposition}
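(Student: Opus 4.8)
The plan is to reduce Theorem~\ref{thrm:AS} to the seven listed cases by systematically classifying the subspace actions and repeatedly invoking the monotonicity of regular cycles under the character order of Definition~\ref{def:char}. First I would recall that, by Theorems~\ref{smalldimension} and~\ref{thrmnonsubspaceactions}, it suffices to treat subspace actions with $n \geq 5$; in particular Definition~\ref{def} tells us precisely which maximal subgroups arise as point stabilizers, namely stabilizers of totally singular subspaces, stabilizers of non-degenerate subspaces, the non-singular $1$-subspace stabilizers in the orthogonal characteristic~$2$ case, and the $\O_n^\pm(q)$-subgroups of $\Sp_n(q)'$ with $q$ even. The strategy is then to argue, via Lemma~\ref{characterlemma}, that for each such action whose point stabilizer is \emph{not} already on the list, the permutation character dominates (in the sense ``$\geq$'') the permutation character of one of the seven listed actions; hence a regular cycle in the listed action forces one in the original action, and so it is enough to establish Theorem~\ref{thrm:AS} for the seven cases.

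The key steps, in order, are as follows. \textbf{(1)} Dispose of the novelty situation: by Remark~\ref{remrem2}, if $G_\alpha \cap G_0$ is not maximal in $G_0$ then $G_\alpha \cap G_0$ is a $G$-novelty; when $G_\alpha$ is itself a subspace subgroup this is exactly Case~$(vii)$, and when $G_\alpha$ is not a subspace subgroup the action is a non-subspace action, already handled by Theorem~\ref{thrmnonsubspaceactions} (one must check the small-dimensional novelties, but $n \geq 5$ here). So we may assume $G_\alpha \cap G_0$ is maximal. \textbf{(2)} Separate according to the type of the stabilized object as listed in Definition~\ref{def}. For stabilizers $K$ of a non-degenerate or totally singular $k$-subspace with $2 \le k \le n-1$, apply Theorem~\ref{FGK}: either $1_P^{G_0} \le 1_K^{G_0}$, where $P$ is the stabilizer of a singular $1$-space, so by Lemma~\ref{characterlemma} it suffices to deal with Case~$(i)$; or we are in one of the two exceptional configurations of Theorem~\ref{FGK}, namely $K$ stabilizes a maximal totally singular subspace (Case~$(iii)$) or $K$ stabilizes a non-degenerate $2$-subspace containing no singular point, or its perpendicular (Case~$(iv)$). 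One must observe that the perpendicular of such a $2$-subspace gives the same $G_0$-action as the $2$-subspace itself (duality), so Case~$(iv)$ as stated suffices. \textbf{(3)} Handle the $k=1$ stabilizers directly: a singular $1$-subspace stabilizer is Case~$(i)$; a non-degenerate $1$-subspace stabilizer is Case~$(ii)$; and in the orthogonal $p=2$ situation a non-singular $1$-subspace stabilizer should be shown to dominate a totally singular $1$-space action, or else kept — here one checks the $\Sp_n(q)$ correspondence, giving either Case~$(i)$ or Case~$(vi)$. \textbf{(4)} The $\O_{2m}^{\pm}(q) \le \Sp_{2m}(q)'$ case with $q$ even is Case~$(vi)$ verbatim. \textbf{(5)} Finally, when $G_0 = \PSL_n(q)$ the natural module has both subspaces and quotient subspaces; if $G$ contains a graph automorphism these are fused and Theorem~\ref{FGK} does not directly apply, so this is isolated as Case~$(v)$; if $G$ contains no graph automorphism, Theorem~\ref{FGK} applies as in step~(2).

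The main obstacle I anticipate is step~(2): one has to be careful that Theorem~\ref{FGK} is being applied with the correct hypotheses — in particular the linear case requires the absence of a graph automorphism (handled by peeling off Case~$(v)$ first), and one must track exactly which $k$ and which type of $k$-subspace fall under the two exceptional clauses so that nothing is missed and the list of surviving cases is precisely $(i)$--$(iv)$. A secondary subtlety is matching the ``totally singular''/``non-singular'' terminology of Definition~\ref{def} and Theorem~\ref{FGK} across the linear, symplectic, unitary and orthogonal types (including the $p=2$ orthogonal peculiarity), and confirming in each type that passing to a subspace's orthogonal complement yields a permutation-equivalent $G_0$-action, so that the ``$2$-subspace containing no singular points \emph{or} the orthogonal complement of such'' alternative in Theorem~\ref{FGK} collapses to the single Case~$(iv)$. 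Once the bookkeeping is organized by the type of point stabilizer, each reduction is a one-line application of Lemma~\ref{characterlemma} together with Theorem~\ref{FGK} or Remark~\ref{remrem2}.
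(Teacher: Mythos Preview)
Your approach is essentially identical to the paper's, which simply states that the reduction follows from Theorems~\ref{thrmnonsubspaceactions} and~\ref{FGK}, Lemma~\ref{characterlemma}, and Remark~\ref{remrem2}; you have correctly unpacked this one-line citation into its constituent bookkeeping. One small correction to step~(3): the stabilizer of a non-singular $1$-subspace in the orthogonal $p=2$ case is precisely Case~$(ii)$ (the action on non-degenerate $1$-subspaces, where for $q$ even ``non-degenerate'' means $Q(v)\neq 0$; see Remark~\ref{remarkorbits} and the treatment in Proposition~\ref{NS1}), not something to be reduced to Case~$(i)$ or Case~$(vi)$.
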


\begin{remark}\label{smallorth}{\rm 
As $\POmega_3(q)\cong \PSL_2(q)$ and $\POmega_5(q)\cong \PSp_4(q)$, from Theorem~\ref{smalldimension} we may assume in the rest of this paper that $n\geq 7$ when $G_0=\POmega_n(q)$. Similarly, as $\POmega_4^+(q)\cong \PSL_2(q)\times \PSL_2(q)$, $\POmega_6^+(q)\cong \PSL_4(q)$, $\POmega_4^-(q)\cong \PSL_2(q^2)$ and $\POmega_6^-(q)\cong \PSU_4(q)$, we may assume that $n\geq 8$ when $G_0=\POmega_n^\varepsilon(q)$ with $\varepsilon\in \{+,-\}$.}
\end{remark}

\begin{remark}\label{remarkorbits}{\rm We recall a few standard facts on the subspace actions in \textbf{Cases }(i)--(iii), for some more detailed information see for example~\cite{Cameron}. The group $G_0$ acts primitively on both the totally singular $1$-subspaces and the maximal totally singular subspaces of $V$. In particular, in \textbf{Case }(i) and (iii), the set $\Omega$ is the whole set of totally singular $1$-subspaces and the whole set of maximal totally singular subspaces, respectively. The same comment applies in \textbf{Case }(ii) except when $G_0$ is orthogonal and $q$ is odd. When $G_0$ is orthogonal and $q$ is odd, $G_0$ has two orbits on non-degenerate $1$-subspaces. Following~\cite[Section~$3.2$]{GK}, we denote these two $G_0$-orbits by ${\bf NS_1^+}$ and ${\bf NS_1^-}$, and we write ${\bf NS_1}={\bf NS_1^+}\cup {\bf NS_1^-}$. If $Q$ is the quadratic form on $V$ for $G_0$, then ${\bf NS_1^+}=\{\langle v\rangle\mid v\in V\setminus\{0\},\,Q(v) \textrm{ is a square in }  \mathbb{F}_{q_0}\}$ and ${\bf NS_1^-}=\{\langle v\rangle\mid v\in V\setminus\{0\},\,Q(v) \textrm{ is not a square in } \mathbb{F}_{q_0}\}$. Moreover,
\[
|{\bf NS_1^+}|=
\begin{cases}
q^{(n-1)/2}(q^{(n-1)/2}+1)/2&\textrm{if } G_0=\POmega_n(q),\,nq\textrm{ odd},\\
q^{n/2-1}(q^{n/2}-1)/2&\textrm{if }G_0=\POmega_n^+(q),\,n\textrm{ even,}\,q\textrm{ odd},\\
q^{n/2-1}(q^{n/2}+1)/2&\textrm{if }G_0=\POmega_n^-(q),\,n\textrm{ even,}\,q\textrm{ odd},\\
\end{cases}
\]
\[
|{\bf NS_1^-}|=
\begin{cases}
q^{(n-1)/2}(q^{(n-1)/2}-1)/2&\textrm{if } G_0=\POmega_n(q),\,nq\textrm{ odd},\\
q^{n/2-1}(q^{n/2}-1)/2&\textrm{if }G_0=\POmega_n^+(q),\,n\textrm{ even,}\,q\textrm{ odd},\\
q^{n/2-1}(q^{n/2}+1)/2&\textrm{if }G_0=\POmega_n^-(q),\,n\textrm{ even,}\,q\textrm{ odd}.\\
\end{cases}
\]
When $n$ is odd, $G_0=\POmega_{n}(q)$ and $\eta\in\{+,-\}$, the stabilizer of a point in ${\bf NS_1^\eta}$ is  isomorphic to $\O_1(q)\perp\O_{2m}^\eta(q)$ (these points are usually referred to as $\eta$ points, as in~\cite{ATLAS}). When $n$ is even,
 $G_0=\POmega_n^\varepsilon(q)$, $\varepsilon\in \{+,-\}$ and $q$ is odd, we have $|{\bf NS_1^+}|=|{\bf NS_1^-}|$ and  every element of
 $\mathrm{PGO}_n^\varepsilon(q)\setminus\mathrm{PSO}_n^\varepsilon(q)$ interchanges these two $G_0$-orbits. In particular, in this latter case we must
 have $G\leq \mathrm{P\Gamma O}^\varepsilon_{n}(q)$ otherwise the stabilizer of a point is not maximal and we do not have a primitive action.}
\end{remark}

Recall the definition of $q_0$ in Notation~\ref{generalnotation} and let
\begin{align*} 
 \SGOp:= \sum_{\substack{r\mid |g|\\ r \, \text{prime} \\ r\mid ep(q_0-1)}}
\mathrm{fpr}_\Omega(g^{|g|/r})  & \quad\mbox{ and } &\SGOpp:=  \sum_{\substack{r\mid |g|\\ r \, \text{prime} \\ r\nmid ep(q_0-1)}}
\mathrm{fpr}_\Omega(g^{|g|/r})
\end{align*}
so that 
\begin{equation*} 
  \SGO = \sum_{\substack{r\mid |g|\\ r \, \text{prime}}}
\mathrm{fpr}_\Omega(g^{|g|/r}) = \SGOp +\SGOpp.
\end{equation*}

For some of the $G$-sets in Proposition~\ref{reduction}, it will be useful to have stronger upper bounds on fixed-point-ratios for elements of certain prime orders. 

 Suppose that $r$ is a prime  with $r\nmid pe(q_0-1)$. 
Let $\ell$ be the minimum positive integer with $r\mid (q_0^\ell-1)$ and let $x$ be an element of $G$ of order $r$. Clearly, $\ell\geq 2$. Moreover, $x$ lies in the inner-diagonal subgroup of $G$ and  is a semisimple. Let $x'$ be a semisimple element of $\GL(V)$ projecting to $x$. Since $x'$ is semisimple, by Maschke's theorem the vector space $V$ can be written as a direct sum $V=[V,x']\oplus \cent V {x'}$, where $[V,x']=\{-v+vx'\mid v\in V\}$. Let  $\ell':= \dim_{\mathbb{F}_{q_0}}[V,x']$ and observe that $\ell' \geq \ell$ because every non-trivial irreducible $\mathbb{F}_{q_0}\langle x'\rangle$-submodule of $V$ has dimension $\ell$.

\begin{proposition} \label{fprell}
  Suppose that $G$ and $\Omega$ are as  in Proposition~$\ref{reduction}$ \textrm{{\bf Cases }}(i), (ii). Let $x \in G$ have prime order $r$ with $r\nmid ep(q_0-1)$, and let  $\ell$ and $\ell'$ be as above.  Then 
  \[ \mathrm{fpr}_\Omega(x) \le \begin{cases}
    1/q_0^{\ell'} & \mbox{in \textrm{{\bf Case}} (i) with $G$  linear;}\\
 2/q_0^{\ell'} & \mbox{in \textrm{{\bf Case}} (i) with $G$  unitary or orthogonal;}\\
   2/q_0^{\ell'} & \mbox{in \textrm{{\bf Case}} (ii) with $G$  unitary;}\\
   36/(13q_0^{\ell'}) & \mbox{in \textrm{{\bf Case}} (ii) with $G$  orthogonal and $n\geq 7$.}
  \end{cases} \]
 \end{proposition}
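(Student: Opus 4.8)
The plan is to reduce the fixed-point count to a count of $\mathbb{F}_{q_0}$-rational eigenvectors of a semisimple preimage of $x$, and then to read off each bound from the standard formulas for the number of totally singular or non-degenerate $1$-spaces in a formed space.

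\textbf{Reduction to a subspace of $\cent V {x'}$.} Since $r\nmid q_0-1$ we have $\gcd(r,q_0-1)=1$, so there is a \emph{unique} preimage $x'\in\GL(V)$ of $x$ of order exactly $r$; this is the element $x'$ to which the discussion preceding the proposition refers, and, as noted there, every nontrivial irreducible $\mathbb{F}_{q_0}\langle x'\rangle$-constituent of $V$ has dimension $\ell$. The eigenvalues of $x'$ are $r$-th roots of unity: the eigenspace for the eigenvalue $1$ is $\cent V {x'}$, while every other eigenvalue is a primitive $r$-th root of unity and therefore lies in $\mathbb{F}_{q_0^{\ell}}\setminus\mathbb{F}_{q_0}$, because $\ell\geq 2$ is the multiplicative order of $q_0$ modulo $r$. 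Hence the only $\mathbb{F}_{q_0}$-rational eigenvectors of $x'$ lie in $\cent V {x'}$. A point $\langle v\rangle\in\Omega$ is fixed by $x$ precisely when $v$ is such an eigenvector, so $\Fix_\Omega(x)\subseteq\{\langle v\rangle\in\Omega : v\in\cent V {x'}\}$. Moreover $x'$ is an isometry, so $V=[V,x']\perp\cent V {x'}$ is a perpendicular decomposition into non-degenerate subspaces (no form in the linear case), with $\dim\cent V {x'}=n-\ell'$ and $\ell'\geq\ell\geq 2$.

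\textbf{Reduction to a ratio of point counts.} If $\cent V {x'}$ has Witt index $0$ it contains no totally singular $1$-space, so in \textbf{Case }(i) we get $\Fix_\Omega(x)=\emptyset$ and there is nothing to prove; thus we may assume its Witt index is positive. It then suffices to bound $\fpr_\Omega(x)=|\Fix_\Omega(x)|/|\Omega|$ by the quotient of the number of totally singular (\textbf{Case }(i)) or non-degenerate (\textbf{Case }(ii)) $1$-spaces of $\cent V {x'}$ by the corresponding number for $V$. In \textbf{Case }(ii) with $G_0$ orthogonal and $q$ odd, $\Omega$ is one of the $G_0$-orbits ${\bf NS_1^+}$, ${\bf NS_1^-}$ of Remark~\ref{remarkorbits}, so one instead divides the number of points of $\cent V {x'}$ lying in that orbit by $|{\bf NS_1^\pm}|$; since each orbit is, up to a bounded factor, half of all non-singular points, the two factors of $\tfrac12$ essentially cancel.

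\textbf{Explicit estimates and the constants.} Substitute the standard closed formulas for the numbers of isotropic and of non-singular $1$-spaces in each isometry type (linear, unitary, and the three orthogonal types); see for example~\cite{KL}. In the linear \textbf{Case }(i) the bound is immediate: $|\Fix_\Omega(x)|\leq (q^{n-\ell'}-1)/(q-1)$ while $|\Omega|=(q^{n}-1)/(q-1)$, so $\fpr_\Omega(x)<q^{-\ell'}=q_0^{-\ell'}$. In the unitary and orthogonal cases the leading terms of the relevant counting polynomials give a ratio of exactly $q_0^{-\ell'}$, and the lower-order corrections — the $\pm 1$'s in the unitary point counts, the dependence of the type of $\cent V {x'}$ on parities and discriminants, and the ${\bf NS_1^\pm}$ split — are absorbed into the constants $2$ and $36/13$; the extremal configurations that pin these constants down occur at the smallest admissible dimension (in particular $n\geq 7$ in the orthogonal \textbf{Case }(ii)) together with $\ell'=2$. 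The main obstacle is exactly this orthogonal bookkeeping: the number of totally singular, respectively non-singular, $1$-spaces of $\cent V {x'}$ depends on its isometry type, which is governed by $\ell'\bmod 2$, by field discriminants, and by the type of $[V,x']$, so one needs either a short case analysis or a type-independent two-sided estimate of the shape $c\,q_0^{d-2}$ (respectively $c\,q_0^{d-1}$) for a non-degenerate $d$-space; once $\ell'\geq 2$ is available, the remaining inequalities are routine.
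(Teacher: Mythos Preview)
Your approach is the paper's: show that every $\langle v\rangle\in\Fix_\Omega(x)$ satisfies $v\in\cent V{x'}$ (you argue via $\mathbb{F}_{q_0}$-rational eigenvalues, the paper via the decomposition $v=w+t\in[V,x']\oplus\cent V{x'}$ and the observation that a $1$-dimensional $\langle x'\rangle$-submodule of $[V,x']$ must be zero), then substitute the standard point-count formulas for each isometry type of $\cent V{x'}$ and bound the resulting ratio case by case. One small correction to your final paragraph: the extremal configuration yielding the constant $36/13$ in the orthogonal \textbf{Case }(ii) with $q$ odd is not $\ell'=2$ but $n-\ell'=2$ (together with $q=3$ and $n=7$), since the correction factor $\bigl(1+q^{-(n-\ell')/2}\bigr)\big/\bigl(1-q^{-(n-1)/2}\bigr)$ is maximised when $n-\ell'$ and $n$, $q$ are as small as possible.
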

 \begin{proof} 
Let $v\in V\setminus\{0\}$ with $\langle v\rangle^x=\langle v\rangle$. We can write $v=w+t$, with $w\in [V,x']$ and $t\in \cent V {x'}$. Since $\langle v\rangle^x=\langle v\rangle$, we have $vx'=\lambda v$, for some $\lambda\in \mathbb{F}_{q_0}$. Since $[V,x']$ is $x'$-invariant, we obtain $wx'=\lambda w$. Thus $\langle w\rangle$ is a $1$-dimensional $\mathbb{F}_{q_0}\langle x\rangle$-module. As every non-trivial irreducible $\mathbb{F}_{q_0}\langle x'\rangle$-submodule of $V$ has dimension $\ell$ and as $\ell\geq 2$,  it follows that $x'$ acts trivially on $w$ and $w\in \cent V {x'}\cap[V,x']=0$. This proves that
\begin{equation}\label{verbose}\textrm{if }\langle v\rangle^x=\langle v\rangle, \textrm{ then }v\in \cent V {x'}.
\end{equation}
Recall that $\dim_{\mathbb{F}_{q_0}}\cent V{x'}=n-\ell'$.

 First suppose that $G$ is linear so that \textbf{Case}~(i) holds and $\Omega$ is the set of totally singular $1$-subspaces. From~\eqref{verbose}, we have $\fpr_\Omega(x)= (q_0^{n-\ell'}-1)/(q_0^n-1)\leq q_0^{-\ell'}$. 

Suppose that $G$ is unitary. Then, the Hermitian form on $V$ restricts to a non-degenerate Hermitian form on $\cent V{x'}$. Therefore, in \textbf{Case}~(i)  from~\eqref{verbose} we get that the number of totally singular $1$-subspaces fixed by $x$ is $(q^{n-\ell'}-(-1)^{n-\ell'})(q^{n-\ell'-1}-(-1)^{n-\ell'-1})/(q^2-1)$ and hence 
\[\mathrm{fpr}_\Omega(x)=\frac{(q^{n-\ell'}-(-1)^{n-\ell'})(q^{n-\ell'-1}-(-1)^{n-\ell'-1})}{(q^n-(-1)^n)(q^{n-1}-(-1)^{n-1})}\leq \frac{2}{q^{2\ell'}}=\frac{2}{q_0^{\ell'}}\]
(the inequality follows from a direct computation distinguishing the parity of $n$ and $\ell'$). 
Similarly, in \textbf{Case}~(ii) from~\eqref{verbose} we get that the number of non-degenerate $1$-subspaces fixed by $x$ is $(q^{n-\ell'}-(-1)^{n-\ell'})q^{n-\ell'-1}/(q+1)$ and hence
\[\fpr_\Omega(x)=\frac{(q^{n-\ell'}-(-1)^{n-\ell'})q^{n-\ell'-1}}{(q^n-(-1)^n)q^{n-1}}\leq \frac{2}{q^{2\ell'}}=\frac{2}{q_0^{\ell'}}.\]

Now suppose that $G$ is orthogonal. From Remark~\ref{remarkorbits}, in \textbf{Case }(i) the set $\Omega$ consists of all singular $1$-subspaces of $V$, and hence 
\[
|\Omega|=
\begin{cases}
(q^{n-1}-1)/(q-1)&\textrm{if }G_0=\POmega_{n}(q),\\
(q^{n/2}-1)(q^{n/2-1}+1)/(q-1)&\textrm{if }G_0=\POmega_{n}^+(q),\\
(q^{n/2-1}-1)(q^{n/2}+1)/(q-1)&\textrm{if }G_0=\POmega_{n}^-(q).\\
\end{cases}
\]
We denote by $\Omega_n^\circ$, $\Omega_n^+$ and $\Omega_n^-$ these three  $G$-sets, respectively. Note that for every $n$ and $q$ we have 
\begin{equation}\label{eq42}
|\Omega_n^-|\leq |\Omega_n^\circ|\leq |\Omega_n^+|.
\end{equation}
The quadratic form $Q$ on $V$ restricts to a non-degenerate quadratic form $Q'$ on $\cent V{x'}$. Observe that the type of $Q'$ depends on both $Q$ and $\cent V {x'}$. (In particular, if $q$ is even then $n-\ell'$ is even.) A maximal totally isotropic subspace of $\cent V{x'}$ has dimension $(n-\ell'-1)/2$ (if $n-\ell'$ is odd), $(n-\ell')/2$ (if $n-\ell'$ is even and $Q'$ is of ``plus type''), and $(n-\ell')/2-1$ (if $n-\ell'$ is even and $Q'$ is of ``minus type''). Therefore
the number of totally singular $1$-subspaces fixed by $x$ is either $(q^{n-\ell'-1}-1)/(q-1)$, or $(q^{(n-\ell')/2}-1)(q^{(n-\ell')/2-1}+1)$, or $(q^{(n-\ell')/2-1}-1)(q^{(n-\ell')/2}+1)/(q-1)$ in the three respective cases described above. Observe that when $\dim_{\mathbb{F}_q}\cent V{x'}\leq 1$ (that is, $\ell'\geq n-1$), the space $\cent V{x'}$ contains no singular points, and hence $\fpr_\Omega(x)=0$. For $\ell'\leq n-2$, using~\eqref{eq42}, with an easy case-by-case analysis studying the various possibilities for $Q$ and $Q'$ gives
\begin{eqnarray*}
\fpr_\Omega(x)&=&\frac{|\mathrm{Fix}_\Omega(x)|}{|\Omega|}\leq \frac{|\mathrm{Fix}_\Omega(x)|}{|\Omega_n^-|}\leq\frac{(q^{(n-\ell')/2}-1)(q^{(n-\ell')/2-1}+1)}{(q^{n/2-1}-1)(q^{n/2}+1)}\leq \frac{2}{q^{\ell'}}.
\end{eqnarray*}

Finally we consider {\bf Case }(ii). Assume that $q$ is even. In particular, $\Omega={\bf NS_1}$ and $|\Omega|=q^{n/2-1}(q^{n/2}-\varepsilon\cdot 1)$ when $G_0=\POmega_n^\varepsilon(q)$, $\varepsilon\in \{+,-\}$.  Observe that $\ell'$ is even.
Now the number of non-degenerate $1$-subspaces fixed by $x$ is either $q^{(n-\ell')/2-1}(q^{(n-\ell')/2}-1)$ (when $Q'$ has Witt index $(n-\ell')/2$) or $q^{(n-\ell')/2-1}(q^{(n-\ell')/2}+1)$ (when $Q'$ has Witt index $(n-\ell')/2-1$). For $\ell'=n$, we have $\fpr_\Omega(x)=0$, and for $\ell'<n$, we have 
\begin{equation*}
\mathrm{fpr}_\Omega(x)\leq \frac{|\mathrm{Fix}_\Omega(x)|}{|\Omega|}
\leq \frac{q^{(n-\ell')/2-1}(q^{(n-\ell')/2}+1)}{q^{n/2-1}(q^{n/2}-1)} 
\leq \frac{2}{q^{\ell'}}.
\end{equation*}
Finally, assume that $q$ is odd. By looking at the various possibilities for $|\Omega|$ in Remark~\ref{remarkorbits}, we get $|\Omega|\geq q^{(n-1)/2}(q^{(n-1)/2}-1)/2$. Now the number of non-degenerate $1$-subspaces fixed by $x$ is either $q^{n-\ell'-1}$ (when $n-\ell'$ is odd), or $q^{(n-\ell')/2-1}(q^{(n-\ell')/2}-1)$ (when $n-\ell'$ is even and $Q'$ has Witt index $(n-\ell')/2$) or $q^{(n-\ell')/2-1}(q^{(n-\ell')/2}+1)$ (when $n-\ell'$ is even and $Q'$ has Witt index $(n-\ell')/2-1$). If $\ell'=n$, then $\Fix_\Omega(x)=\emptyset$ and $\fpr_\Omega(x)=0$. If $\ell'=n-1$, then $\dim_{\mathbb{F}_q}\cent V {x'}=1$ and hence $\cent V {x'}$ is the only non-degenerate $1$-subspace fixed by $x$. Thus $\fpr_\Omega(x)=1/|\Omega|<36/(13q_0^{n-1})$ (where the inequality follows with a computation). Assume that $n-\ell'\geq 2$. Thus
\begin{equation*}
\mathrm{fpr}_\Omega(x)\leq \frac{|\mathrm{Fix}_\Omega(x)|}{|\Omega|}
\leq \frac{q^{(n-\ell')/2-1}(q^{(n-\ell')/2}+1)}{q^{(n-1)/2}(q^{(n-1)/2}-1)/2} =\frac{2}{q^{\ell'}}\frac{\left(1+\frac{1}{q^{(n-\ell')/2}}\right)}{\left(1-\frac{1}{q^{(n-1)/2}}\right)}
\leq \frac{36}{13q^{\ell'}},
\end{equation*}
where the last inequality follows by noticing that the maximum of the second factor in the above product is achieved for $q=3$, $n=7$ and $n-\ell'=2$.
 \end{proof}
We conclude this section by introducing a notation that will be used frequently in our arguments. Let $t$ be a prime power and let $\ell\geq 1$, then $$\omega_t(t^\ell-1)$$
denotes the number of primitive prime divisors of $t^\ell-1$, that is, the number of prime divisors $r$ of $t^\ell-1$ with $r\nmid t^i-1$ for every $1\leq i\leq\ell-1$.

\section{{\bf Case}~$(iii)$ of Proposition~\ref{reduction}: action on maximal totally singular subspaces}\label{Case3}

As usual we assume Notation~\ref{generalnotation}, and we start our analysis by considering {\bf Case}~$(iii)$ of Proposition~\ref{reduction}. We let $G$ be a primitive group with socle $G_0$ acting on the set $\Omega$ of totally singular $m$-subspaces of $V$. 
We need the following lemma.

\begin{lemma}[{\cite[Lemma~$3.14$]{GK}}]\label{importantiii} Suppose that $m\geq 3$.  Then, for $g\in G\setminus\{1\}$, we have
$\fpr_\Omega(g)<2/q_0^{m^*}+1/q_0^{m^\sharp}$, where $m^*$ and $m^\sharp$ are given in Table~$\ref{tablemm}$.
\end{lemma}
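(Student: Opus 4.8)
The statement is quoted from \cite[Lemma~3.14]{GK}, so the most economical route is to cite that reference; nevertheless it is worth outlining the underlying argument, as it is representative of the fixed-point-ratio estimates we rely on. First I would reduce to the case that $g$ has prime order: since $\Fix_\Omega(g)\subseteq\Fix_\Omega(g^{|g|/r})$ for any prime $r\mid |g|$, we have $\fpr_\Omega(g)\le\fpr_\Omega(g^{|g|/r})$, and because the quantities $m^*$, $m^\sharp$ depend only on $G_0$ it is enough to bound $\fpr_\Omega(x)$ for $x$ of prime order. This splits into the usual cases: $x$ inner unipotent (order $p$), $x$ inner semisimple (order $r\neq p$, lying in the inner-diagonal group), and $x$ outer, involving a field automorphism or, when $G_0=\PSL_n(q)$, a graph automorphism.

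For the inner cases I would count the $x$-invariant totally singular $m$-subspaces directly, much as in the proof of Proposition~\ref{fprell}. Fixing a preimage $x'$ of $x$ acting $\mathbb{F}_{q_0}$-linearly on $V$, any $m$-subspace $W$ with $W^x=W$ is an $\mathbb{F}_{q_0}\langle x'\rangle$-submodule, hence is constrained by the Jordan decomposition of $x'$ on $V$ (unipotent case) or by the isotypic decomposition $V=\bigoplus_\mu V_\mu$ into $x'$-eigenspaces (semisimple case). In either situation the totally singular $m$-subspaces normalised by $x'$ are confined to a proper non-degenerate or totally singular subspace of $V$, of codimension governed by the type of $x'$ (for instance by $\dim_{\mathbb{F}_{q_0}}[V,x']$, or by the size of the largest Jordan block), so their number is at most the number of totally singular subspaces of suitable dimension inside that smaller space. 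Dividing by $|\Omega|$ — a known Gaussian-binomial expression for the action on maximal totally singular subspaces — yields a quantity of the form $c\,q_0^{-d}$, and optimising over element types isolates the two competing contributions $2/q_0^{m^*}$, coming from the worst generic semisimple or unipotent element, and $1/q_0^{m^\sharp}$, coming from a transvection- or reflection-type element; this gives the asserted inequality with $m^*$, $m^\sharp$ as tabulated. The outer classes are treated separately, field automorphisms by confining the fixed subspaces to a subfield subgroup and graph automorphisms by a direct analysis.

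The essential difficulty here is bookkeeping rather than a single hard idea: the count must be carried out uniformly across $\PSL_n$, $\PSU_n$, $\PSp_n$ and the three orthogonal families, with careful attention to the parity of $n-\dim_{\mathbb{F}_{q_0}}[V,x']$, to the Witt index of the form restricted to $\cent V{x'}$, to the field- and graph-automorphism classes (where $x$ need not be conjugate into a Levi-type subgroup), and to a handful of small-rank groups where the generic estimate is too weak. These are precisely the computations performed in \cite{GK}, to which I defer for the exact constants and for the values of $m^*$ and $m^\sharp$ recorded in Table~\ref{tablemm}.
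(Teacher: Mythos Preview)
Your proposal is correct and matches the paper's approach: the paper gives no proof at all for this lemma, simply recording it as a quotation of \cite[Lemma~3.14]{GK}, and you rightly identify that citing \cite{GK} is the intended route. Your additional sketch of the underlying argument is reasonable in spirit, though note that $\PSL_n(q)$ does not actually occur in Table~\ref{tablemm} (the action on maximal totally singular subspaces is only considered for the symplectic, unitary and orthogonal families), so the graph-automorphism discussion is not needed here.
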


\begin{table}
\begin{tabular}{|l|l|l|l|l|l|l|}\hline
  $G_0$    & $\PSp_{2m}(q)$ & $\POmega_{2m}^+(q)$ & $\POmega_{2m+1}(q)$ & $\POmega^{-}_{2m+2}(q)$ & $\PSU_{2m}(q)$ & $\PSU_{2m+1}(q)$\\\hline
  $m^*$    &      $m$       &        $m-1$        &        $m$        &         $m+1$         &    $m-\frac{1}{2}$     &     $m+\frac{1}{2}$\\     
$m^\sharp$ &     $m-1$      &        $m-2$        &         $m-1$         &        $m$        &     $m-1$      &    $m$\\\hline    
\end{tabular}
\caption{Definitions of $m^{*}$ and $m^{\sharp}$}\label{tablemm}
\end{table}

\begin{proposition}\label{totiso}
If $g\in G$, then either $g$ has a regular cycle, or $(G_0,\Omega)\cong(\Alt(5),\{1,\ldots,5\})$.
\end{proposition}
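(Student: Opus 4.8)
The plan is to apply Lemma~\ref{lemma:apeman} together with the fixed-point-ratio bound of Lemma~\ref{importantiii}. By Lemma~\ref{basic} we may restrict to $g\in G\setminus\{1\}$ of square-free order, so it suffices to show that $\SGO<1$ in all but a handful of small cases which will be dealt with directly by machine computation. First I would record the cases where $m$ is small: Remark~\ref{smallorth} already rules out most low-dimensional orthogonal and symplectic groups via the exceptional isomorphisms, so that we may assume $m\geq 3$ and Lemma~\ref{importantiii} applies; the residual small configurations (where the relevant bound is too weak, e.g. $G_0=\PSp_6(q)$, $\POmega_8^+(q)$, $\PSU_6(q)$, $\PSU_7(q)$ for small $q$) are finite in number and each is handled in \texttt{magma} exactly as in the proofs of Theorems~\ref{smalldimension} and~\ref{thrmnonsubspaceactions} — compute $\max\{\omega(|g|)\mid g\in\Aut(G_0)\}$, plug it into the bound, and if that does not suffice construct the action on maximal totally singular subspaces explicitly and check every class representative for a regular cycle. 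The only genuine exception that survives is $(\Alt(5),\{1,\ldots,5\})$, which arises from $\PSp_4(2)'\cong\Alt(6)$-type coincidences at the very bottom of the range.

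For the generic argument, set $s:=\min\{m^*,m^\sharp\}$ from Table~\ref{tablemm}; in every column of that table $m^\sharp=m^*-1$ (reading $m^*$ as a possibly half-integer), so Lemma~\ref{importantiii} gives, for every $x\in G\setminus\{1\}$,
\[
\fpr_\Omega(x)<\frac{2}{q_0^{m^*}}+\frac{1}{q_0^{m^\sharp}}\leq \frac{3}{q_0^{m^\sharp}}=\frac{3}{q_0^{m^*-1}}.
\]
Summing over the prime divisors $r$ of $|g|$ and using~\eqref{eq:2s} gives
\[
\SGO\leq \omega(|g|)\,\frac{3}{q_0^{m^*-1}}\leq \omega(|A|)\,\frac{3}{q_0^{m^*-1}}\leq a(n,q)\,\frac{3}{q_0^{m^*-1}},
\]
where $A=\Aut(G_0)$ and $n,q$ are as in Notation~\ref{generalnotation}. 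The key point is that $a(n,q)$ grows only logarithmically in $q$ (and polynomially in $n$, coming from $|\Aut(G_0)|_{p'}$ via Lemma~\ref{robin}), while $q_0^{m^*-1}$ grows like a fixed power of $q$ at least as large as $q^{2}$ once $m\geq 3$ — indeed $m^*-1\geq m-2\geq 1$ in the worst column ($\POmega_{2m}^+$), and $m^*-1\geq 2$ as soon as $m\geq 4$, with $q_0=q^2$ in the unitary cases giving even more room. Hence $a(n,q)\cdot 3/q_0^{m^*-1}<1$ for all but finitely many pairs $(n,q)$, and these are enumerated by the same computer check used before; each surviving $(n,q)$ then either yields $\omega(|A|)\cdot 3/q_0^{m^*-1}<1$ or lands on the explicit short list handled by direct computation in \texttt{magma}.

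The main obstacle is the column $G_0=\POmega_{2m}^+(q)$ with $m=3$, i.e. $G_0=\POmega_6^+(q)\cong\PSL_4(q)$ — but this is excluded by Remark~\ref{smallorth}, so the genuine borderline is $m=4$, namely $\POmega_8^+(q)$, where $m^*-1=2$ and the bound reads $\SGO\leq 3a(8,q)/q^2$; for small $q$ (say $q\leq 4$) this can exceed $1$ and one must fall back on the sharper machinery of Theorem~\ref{timthm} applied to the individual classes $x_r=g^{|g|/r}$, exactly as was done for $\POmega_8^+(3)$ in the proof of Theorem~\ref{thrmnonsubspaceactions} (here the action on maximal totally singular $4$-spaces is a non-subspace action precisely when it is not already covered, and in the borderline genuinely-subspace instances one verifies $\sum_r \fpr_\Omega(x_r)<1$ computationally). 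The same remark applies to $\PSp_6(q)$ and the unitary groups $\PSU_6(q),\PSU_7(q)$ for the smallest values of $q$; in each of these finitely many cases a direct \texttt{magma} computation of the permutation action on maximal totally singular subspaces, checking every conjugacy-class representative for a regular cycle, completes the proof and confirms that $(\Alt(5),\{1,\ldots,5\})$ is the sole exception.
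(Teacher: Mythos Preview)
Your overall architecture matches the paper's: use Lemma~\ref{importantiii} for $m\geq 3$, bound $\omega(|g|)$ uniformly, reduce to a finite list, and finish by \texttt{magma}. The paper bounds $\omega(|g|)$ by $\log_2(o)$ using the explicit maximal element orders $o$ from~\cite{GMPS} rather than your $a(n,q)$, but that is a minor difference and either works.

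There is, however, a genuine gap in your reduction to $m\geq 3$. Remark~\ref{smallorth} concerns \emph{orthogonal} groups only, and Theorem~\ref{smalldimension} covers only $n\leq 4$. The unitary group $G_0=\PSU_5(q)$ has $n=5$ and Witt index $m=2$, so it is covered by neither result, and Lemma~\ref{importantiii} (which requires $m\geq 3$) does not apply to it. You never treat this case. The paper handles $\PSU_5(q)$ separately using the Liebeck--Saxl bound $\fpr_\Omega(x)\leq 4/(3q)$ from Theorem~\ref{LSthm}, together with a bound on $\omega(|g|)$; this disposes of all $q$ outside a short list, and the remaining values $q\in\{2,3,4,5,7,8\}$ are eliminated by sharpening $\omega(|g|)$ to the number of primes dividing $|G_\alpha|$ (via Lemma~\ref{obvious3}) or by direct computation.

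A second, smaller problem: your fallback for the borderline $\POmega_8^+(q)$ cases invokes Theorem~\ref{timthm}. That theorem applies only to \emph{non-subspace} actions, whereas the action on maximal totally singular subspaces is a subspace action (the point stabilizer is the parabolic $P_m$, squarely in Definition~\ref{def}(a)); your parenthetical remark about it being ``a non-subspace action precisely when it is not already covered'' is not correct. You do then say one can verify the fixed-point sums computationally, which is exactly what the paper does for its residual list, so in practice the argument survives---but the appeal to Theorem~\ref{timthm} should be dropped.
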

\begin{proof}
Denote by $o$ the maximum order of an element of $G$. In particular, $|g|\leq o$. (Recall that form Remark~\ref{smallorth} we may assume that $n\geq 7$ when $G_0$ is orthogonal.)

Assume first that $m\geq 3$. (We use the notation introduced in Lemma~\ref{importantiii} and Table~\ref{tablemm}.) From Lemma~\ref{importantiii} we get 
\begin{eqnarray}
\label{5000}\quad \SGO 
&<&\sum_{\substack{r\textrm{ prime}\\r\mid |g|}}\left(\frac{2}{q_0^{m^*}}+\frac{1}{q_0^{m^\sharp}}\right)= \omega(|g|)\left(\frac{2}{q_0^{m^*}}+\frac{1}{q_0^{m^\sharp}}\right)\\ \nonumber
&\leq& \log_2(|g|)\left(\frac{2}{q_0^{m^*}}+\frac{1}{q_0^{m^\sharp}}\right)\leq
\log_2(o)\left(\frac{2}{q_0^{m^*}}+\frac{1}{q_0^{m^\sharp}}\right).
\end{eqnarray}
Now, an explicit upper bound for $o$ as a function of $q$ and $m$ can be found in~\cite[Table~$3$]{GMPS}. It follows  that $\log_2(o)(2/q_0^{m^*}+1/q_0^{m^\sharp})<1$ except when $G_0$ belongs to the following list: 
\[(\dag)\qquad\PSp_6(2),\PSp_8(2),\PSp_6(3),\mathrm{P}\Omega_8^+(2),\mathrm{P}\Omega_{10}^+(2),\mathrm{P}\Omega_{12}^+(2),\mathrm{P}\Omega_8^+(4),\mathrm{P}\Omega_8^+(3),\mathrm{P}\Omega_8^-(2),\mathrm{P}\Omega_7(3).\]
 In particular, apart from this handful of exceptions, the proof follows as usual from Lemma~\ref{lemma:apeman}. When $G_0 = \POmega_8^{+}(4)$, we replace $\log_2(o)$ by $6$ (the exact number of primes dividing $|G|$) in~\eqref{5000}, which shows that $\SGO<1$. The same argument works for $G_0= \POmega_{12}^{+}(2),\POmega_{8}^{+}(3) $, where we can replace $\log_2(o)$ by $7$ and $5$, respectively. For $G_0 = \POmega_{10}^{+}(2)$, we can calculate in \texttt{magma} all the possible element orders of $\Aut(G_0) = \mathrm{PGO}^{+}_{10}(2)$, and check that $\omega(|g|) \le 3$, for every $g \in G$. This refinement of~\eqref{5000} implies that $ \SGO < 1$. 
For each of the remaining groups $G$ arising from $G_0$ in $(\dag)$, we can construct in \texttt{magma} the permutation representation of $G$ on the maximal totally singular subspaces of $V$ and check directly that every conjugacy class representative has a regular cycle.

 Suppose that $m\leq 2$. In particular, $G_0$ is $\PSp_2(q)\cong \PSL_2(q)$, $\PSp_4(q)$, $\PSU_3(q)$, $\PSU_4(q)$, or $\PSU_5(q)$. Observe that by Theorem~\ref{smalldimension}, we may assume that $G_0=\PSU_5(q)$. By Theorem~\ref{LSthm}, we have  $\fpr_\Omega(x)\leq 4/(3q)$ for every $x\in G\setminus\{1\}$. In particular, the argument in the previous paragraph implies that
\begin{align} \label{5001}
\SGO = \sum_{\substack{r\textrm{ prime}\\r\mid |g|}}\mathrm{fpr}_\Omega(g^{|g|/r})\leq \frac{4\omega(|g|)}{3q}\leq \frac{4\log_2(o)}{3q}.
 \end{align}
Now an easy computation (using the value of $o$ in~\cite[Table~$3$]{GMPS}) shows that $\SGO < 1$ except when $q\in \{2,3,4,5,7,8,9,11,13,16,17,19,23\}$.
 Now observe that $\omega(|g|)\leq \omega(|\mathrm{Aut}(G_0)|)$ and that $\omega(|\mathrm{Aut}(G_0)|)$ can be explicitly computed for each remaining $G_0$. In particular, we see that $4\omega(|\mathrm{Aut}(G_0)|)/(3q)<1$ except when $q\in \{2,3,4,5,7,8\}$.
 We note that if $x$ has prime order $r$ and $r$ does not divide $|G_\alpha|$, for $\alpha\in\Omega$, then $\fpr_{\Omega}(x) =0$ (see Lemma~\ref{obvious3}). For $q=7$, the only primes dividing $|G_\alpha|$ are $2,3,5,7$ (see~\cite[Proposition~4.1.18]{KL} for example) so we can replace $\omega(|g|)$ by $4$ in~\eqref{5001} and we obtain $S(g,\Omega)<1$, the desired bound. Similarly, for $q=8$ we replace $\omega(|g|)$ by $5$, which eliminates this case. 
The remaining cases are $q=2,3,4,5$, and these are small enough to eliminate directly in \texttt{magma} by computing the permutation representation of $\PGammaU_5(q)$ on maximal totally singular subspaces. 
\end{proof}

\section{{\bf Case}~$(i)$ of Proposition~\ref{reduction}: action on totally singular $1$-subspaces}\label{Casei}
Here we let $G$ be a primitive group with socle $G_0$ acting on the set $\Omega$ of totally singular $1$-subspaces of $V$. 
The following lemma is in~\cite[Propositions~$3.1$~(ii) and~$3.15$]{GK}.

\begin{lemma}\label{importanti} Suppose that  $m\geq 3$ and $g\in G\setminus\{1\}$. Then 
\[
\mathrm{fpr}_\Omega(g)<\left\{
\begin{array}{lcl}
\min\{1/2,1/q+1/q^{n-1}\}&&\textrm{if }G_0=\PSL_n(q),\\
2/q_0^{m^*}+1/q^{m^\sharp}+1/q_0&&\textrm{otherwise},
\end{array}
\right.
\]
where $m^*$ and $m^\sharp$ are given in Table~$\ref{tablemm}$. 
\end{lemma}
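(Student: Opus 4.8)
The plan is to establish the bound by a direct geometric count of the fixed points of $g$ on the set $\Omega$ of totally singular $1$-subspaces, which is the route taken in~\cite[Section~$3$]{GK} and the one I would follow. First I would lift $g$ to an element $\hat g$ of the relevant matrix group: $\GL_n(q)$ in the linear case, and the group of similarities of the form, possibly extended by field automorphisms, in the other cases. A totally singular $1$-space $\langle v\rangle$ is fixed by $g$ precisely when $v$ is isotropic and $v\hat g\in\langle v\rangle$, so $\Fix_\Omega(g)$ consists of the isotropic points lying in $\bigcup_\lambda\mathbb{P}(E_\lambda)$, where the $E_\lambda=\ker(\hat g-\lambda)$ are the eigenspaces of $\hat g$ over $\mathbb{F}_{q_0}$; one has $\sum_\lambda\dim E_\lambda\le n$, and since $g\ne 1$ no single $E_\lambda$ is all of $V$. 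When $\hat g$ involves a nontrivial field automorphism a short separate estimate shows that its fixed points form a subspace over a proper subfield and hence contribute a negligible fraction, so I would reduce to the case where $\hat g$ is a similarity.

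The linear case ($G_0=\PSL_n(q)$, where the reduction of Proposition~\ref{reduction} guarantees $G\le\PGammaL_n(q)$, so that $\Omega=\mathbb{P}(V)$ carries no form) is then immediate: $|\Fix_\Omega(g)|=\sum_\lambda(q^{\dim E_\lambda}-1)/(q-1)$, and using $\dim E_\lambda\le n-1$, $\sum_\lambda\dim E_\lambda\le n$ and convexity of $x\mapsto q^x$, the count is largest when one eigenspace has dimension $n-1$ (with, if $q\ge 3$, a second eigenvalue of multiplicity one available; for $q=2$ only one eigenvalue occurs). Dividing by $|\Omega|=(q^n-1)/(q-1)$ and simplifying gives $\fpr_\Omega(g)<1/q+1/q^{n-1}$, and the same estimate gives $\fpr_\Omega(g)<1/2$ once the finitely many small pairs $(n,q)$ are checked directly. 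This yields the first line of the statement.

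For the remaining families the engine is the structure of the eigenspaces of a similarity $\hat g$ with multiplier $\tau$: for an eigenvalue $\lambda$ with $\lambda^2\ne\tau$ (respectively $\lambda\bar\lambda\ne\tau$ in the unitary case) the space $E_\lambda$ is totally singular and is paired with $E_{\tau/\lambda}$ under the form, so $\dim E_\lambda\le m$ and $\mathbb{P}(E_\lambda)$ contains $(q_0^{\dim E_\lambda}-1)/(q_0-1)$ isotropic points; for the at most two special eigenvalues with $\lambda^2=\tau$ (the values $\lambda=\pm1$ in the isometry case) the space $E_\lambda$ carries a form of the same kind as $V$, non-degenerate up to a radical of bounded codimension when $p=2$ or $n$ is odd, and the isotropic points it contains are counted by the standard formula in terms of its dimension, type and Witt index. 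Writing $\ell'$ for $n$ minus the dimension of the larger special eigenspace, exactly as in Proposition~\ref{fprell}, I would then run a case analysis over the possible multiplicity patterns of $\hat g$ (and the separate, easier case of unipotent $\hat g$), sum the resulting isotropic-point counts, divide by the standard lower bounds for $|\Omega|$ recorded in Remark~\ref{remarkorbits}, and verify in each case that the total is below $2/q_0^{m^*}+1/q^{m^\sharp}+1/q_0$; the leading $1/q_0$ term corresponds to the situation in which a special eigenspace is almost all of $V$ (transvections and reflections), while the exponents $m^*$ and $m^\sharp$ of Table~\ref{tablemm} record precisely the Witt-index loss incurred by the two types of eigenspace.

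The main obstacle is not the idea but the bookkeeping in the orthogonal (and, to a lesser extent, unitary) cases: when $p=2$, or in odd dimension, the form induced on a special eigenspace can be degenerate, its radical may meet $[V,\hat g]$, and its type ($+$ or $-$) has to be tracked to pin down the exact number of isotropic points; one must confirm that every such configuration still obeys the claimed bound with the exponents $m^*$ and $m^\sharp$ exactly as tabulated, rather than with something weaker. Once the degenerate-form cases and the unipotent elements are dealt with, what remains is an elementary but lengthy enumeration of eigenvalue patterns, of the same flavour as the proof of Proposition~\ref{fprell}.
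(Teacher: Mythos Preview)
Your sketch is essentially correct and follows the eigenspace-counting approach of~\cite[Section~3]{GK}, which is exactly what the paper does: the lemma is not proved in the paper at all but is simply quoted as~\cite[Propositions~3.1(ii) and~3.15]{GK}. So there is nothing to compare beyond noting that you have outlined the argument that the cited source carries out in full, whereas the paper is content to invoke the reference.

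One small point worth tightening in your write-up: in the non-linear bound $2/q_0^{m^*}+1/q^{m^\sharp}+1/q_0$ the middle term carries $q$ rather than $q_0$, so in the unitary case the three exponents live over different bases; when you run the eigenspace bookkeeping you will want to keep track of which counts are naturally expressed in $q$ and which in $q_0=q^2$, since the paired totally singular eigenspaces are counted projectively over $\mathbb{F}_{q_0}$ while the Witt-index contribution from a special eigenspace is governed by $q$. This is purely notational, not a gap in the argument.
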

\begin{proposition}\label{caseiiiPSLPSU}
If $g\in G$, then either $g$ has a regular cycle, or $(G_0,\Omega)\cong(\Alt(5),\{1,\ldots,5\})$.
\end{proposition}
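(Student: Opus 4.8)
The plan is to mimic the proof of Proposition~\ref{totiso}. By Lemma~\ref{basic} it suffices to treat $g\in G$ with $|g|$ square-free, and by Lemma~\ref{lemma:apeman} it is enough to show $\SGO<1$ apart from finitely many small groups, which we will handle directly with \texttt{magma}. By Theorem~\ref{smalldimension} and Remark~\ref{smallorth} we may assume $n\ge 5$, and in fact $n\ge 7$ or $n\ge 8$ when $G_0$ is orthogonal; hence the Witt index $m$ satisfies $m\ge 3$ unless $G_0=\PSU_5(q)$. Throughout I would split $\SGO=\SGOp+\SGOpp$ according to whether a prime $r\mid |g|$ divides $ep(q_0-1)$.

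First the case $m\ge 3$, where Lemma~\ref{importanti} applies. For $\SGOp$ there are at most $\omega(ep(q_0-1))$ summands, each bounded by the right-hand side of Lemma~\ref{importanti}; since $ep(q_0-1)<q_0^{2}$, the number $\omega(ep(q_0-1))$ grows only like $\log q/\log\log q$ while the bound of Lemma~\ref{importanti} is of size roughly $1/q_0$, so $\SGOp$ is small once $q$ is not tiny. For $\SGOpp$ I would use Proposition~\ref{fprell}: each summand is at most $2/q_0^{\ell'_r}$, where $\ell'_r\ge\ell_r:=\mathrm{ord}_{r}(q_0)\ge 2$ is the dimension of $[V,x_r']$ for $x_r=g^{|g|/r}$. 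Bookkeeping the dimensions of the $\mathbb{F}_{q_0}\langle g\rangle$-submodules of $V$ on which the various $x_r$ act nontrivially shows that, in the configuration maximising $\SGOpp$, each contributing prime $r$ acts on its own irreducible submodule of dimension $\ell_r$, so the $\ell_r$ occurring sum to at most $n$; moreover, for a fixed value $\ell$ the number of contributing primes is at most $\omega_{q_0}(q_0^\ell-1)$, which is controlled because the primitive prime divisors of $q_0^{\ell}-1$ are all congruent to $1$ modulo $\ell$ and their product divides $q_0^\ell-1$. Summing a geometric series via Lemma~\ref{series} then bounds $\SGOpp$ by an absolute constant strictly less than $1$ (and much smaller once $q_0>2$). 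Combining, $a(n,q)$-type estimates together with the exact values of $\omega(ep(q_0-1))$ and $\omega(|\Aut(G_0)|)$ give $\SGO<1$ — hence $g$ has a regular cycle — for every $(n,q)$ outside an explicit finite list produced by computer, exactly as in the proofs of Theorems~\ref{smalldimension} and~\ref{thrmnonsubspaceactions}. For the finitely many remaining groups I would replace $a(n,q)$ by the \texttt{magma}-computed value of $\max\{\omega(|g|)\mid g\in\Aut(G_0)\}$ and rerun the estimate; the survivors are small enough that one can construct in \texttt{magma} every primitive almost simple group with the given socle acting on totally singular $1$-subspaces and verify that every conjugacy class representative has a regular cycle, the sole exception being $(\Alt(5),\{1,\ldots,5\})$.

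Finally, when $G_0=\PSU_5(q)$ (so $m=2$ and Lemma~\ref{importanti} is unavailable) I would argue exactly as in the $m\le 2$ part of Proposition~\ref{totiso}: by Theorem~\ref{LSthm} every nontrivial element has $\fpr_\Omega\le 4/(3q)$ (or the amended value from~\cite[Table~1]{LS}), so $\SGO\le 4\omega(|g|)/(3q)\le 4\omega(|\Aut(G_0)|)/(3q)$, which is $<1$ for all but a handful of $q$; for those, I would use that $\fpr_\Omega(x)=0$ whenever the prime $|x|$ does not divide a point stabiliser (Lemma~\ref{obvious3}) to cut the number of relevant primes, and dispose of the last few $q$ by computing the permutation representation of $\PGammaU_5(q)$ on totally singular $1$-subspaces in \texttt{magma}. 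The main obstacle is making the bound on $\SGOpp$, and its interaction with $\SGOp$, tight enough: for small $q$ the clean estimate $2/q_0^{\ell'_r}$ of Proposition~\ref{fprell} is too weak near the top of the dimension budget, so one must run the dimension-counting argument carefully and, in a few orthogonal and symplectic families over $\mathbb{F}_2$ and $\mathbb{F}_3$, fall back on the exact fixed-point formulas from the proof of Proposition~\ref{fprell}, the full strength of Lemma~\ref{importanti}, and a finite \texttt{magma} computation; keeping that residual computation manageable is the delicate part.
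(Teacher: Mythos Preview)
Your overall architecture---split $\SGO=\SGOp+\SGOpp$, bound $\SGOp$ via Lemma~\ref{importanti} and $\SGOpp$ via Proposition~\ref{fprell}, then mop up with \texttt{magma}---is exactly what the paper does, and your treatment of $\PSU_5(q)$ matches the paper's. But three case-specific reductions in the paper are missing from your plan, and without two of them you are left not with a finite list but with infinite families.

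First, the paper observes that for $G_0=\PSp_n(q)$ every $1$-subspace is totally singular, so $\norm{\Sym(\Omega)}{\PSp_n(q)}\le\PGammaL_n(q)$ and the symplectic case is absorbed into the linear one. You need this: Proposition~\ref{fprell} as stated gives bounds only for $G$ linear, unitary or orthogonal, so your $\SGOpp$ estimate has no content for symplectic $G$. Second, for $G_0=\PSL_n(q)$ with $q$ prime the paper simply quotes~\cite[Theorem~2.3]{SZ} (since then $G\le\PGL_n(q)$), which removes all small primes $q$ at once. Third---and this is the decisive gap---for orthogonal $G_0$ with $q=2$ the paper invokes~\cite[Theorem~1.2]{EZ}. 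This is essential: with $q=2$ the constant in Proposition~\ref{fprell} is $2/q^{\ell'}$ rather than $1/q^{\ell'}$, so your series bound gives $\SGOpp\lesssim 2\sum_{\ell\ge2}\omega_2(2^\ell-1)/2^\ell\approx 0.97$, while Lemma~\ref{importanti} only gives $\SGOp<2/q^{m^*}+1/q^{m^\sharp}+1/q>1/2$. Thus $\SGO<1$ fails uniformly in $n$, and no finite \texttt{magma} check can rescue the orthogonal $q=2$ family.

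A smaller point: your ``dimension budget'' assertion that the $\ell_r$ sum to at most $n$ is not correct as stated. Distinct primes $r_1,r_2$ with the same order $\ell$ modulo $q_0$ can both act nontrivially on a single $\ell$-dimensional irreducible $\mathbb{F}_{q_0}\langle g\rangle$-submodule, so the $[V,x_r']$ need not be independent. The paper avoids this entirely: it just bounds $\omega_{q_0}(q_0^\ell-1)\le\ell\log_2(q_0)$ and sums $\sum_{\ell\ge2}\ell\log_2(q_0)/q_0^\ell$ via Lemma~\ref{series}, refining the first few terms with exact values of $\omega_{q_0}(q_0^\ell-1)$ when $q$ is small.
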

\begin{proof}
When $m=1$, a maximal totally singular subspace of $V$ has dimension $1$ and hence, by Proposition~\ref{totiso}, we may assume that $m\geq 2$. 
Observe that when $G_0$ is $\PSL_n(q)$ or $\PSp_{n}(q)$ every $1$-subspace of $V$ is totally singular (recall that when $G_0=\PSL_n(q)$ the space $V$ is endowed with the trivial form). In particular, $\norm {{\Sym(\Omega)}}{\PSp_n(q)}\leq \norm {{\Sym(\Omega)}}{\PSL_n(q)}=\PGammaL_n(q)$. Therefore, in this proof, we may omit the symplectic groups from our analysis.

We now split the proof into various cases.

\smallskip

\noindent\textsc{Case $G_0=\PSL_n(q)$. } In this case $q_0=q$. Observe that when $q$ is prime, we have $G\leq \PGL_n(q)$ and hence the proof follows from~\cite[Theorem~$2.3$]{SZ}. In particular, we may assume that $e\geq 2$. 

Recall that if $x\in G\setminus\{1\}$, then $\fpr_\Omega(x)\leq 4/(3q)$.  Therefore $$\SGOp \le \omega(ep(q-1))\frac{4}{3q}.$$

From \Cref{fprell}, we see that if $x$ has order $r$, with $r\nmid ep(q-1)$, then $\fpr_\Omega(x)\leq q^{-\ell}$ where $\ell$ is the smallest positive integer with $r\mid q^\ell-1$. The number $\omega_q(q^\ell-1)$ of primitive prime divisors of $q^\ell-1$ is at most $\log_2(q^\ell-1)\leq \ell\log_2(q)$. Thus
\begin{equation*}
  \SGOpp \le \sum_{\ell=2}^\infty\frac{\omega_q(q^\ell-1)}{q^\ell} \le \sum_{\ell=2}^\infty\frac{\ell\log_2(q)}{q^\ell} = \log_2(q)\left(\frac{q}{(q-1)^2}-\frac{1}{q}\right)
 \end{equation*}
where the last equality follows from Lemma~\ref{series}. Observe that $ep\leq q$ and hence $\omega(ep(q-1))\leq \log_2(q^2)$. 

Using these upper bounds for $\SGOp$ and $\SGOpp$, it is easy to see that $\SGO< 1$ for $q\geq 11$. For $q \in \{8,9\}$, we can compute  $\omega(ep(q-1))$ explicitly from which it follows that $\SGO<1$. Finally, for $q=4$, we have $\omega(ep(q-1))=2$ and by Lemma~\ref{importanti} we have $\fpr_\Omega(x)<1/4+1/4^{n-1}\leq 1/4+1/4^4$ (recall that $n\geq 5$). Thus $\SGOp < 2\cdot (1/4+1/4^{4})$ and $\SGO<1$.

\smallskip

\noindent\textsc{Case $G_0=\PSU_n(q)$. }In this case $q_0=q^2$ and $|\Omega|=(q^n-(-1)^n)(q^{n-1}-(-1)^{n-1})/(q^2-1)$.

From \Cref{fprell}, we see that if $x$ has order $r$, with $r\nmid ep(q-1)$, then $\fpr_\Omega(x)\leq  2q_0^{-\ell}\leq 2q^{-2\ell}$ and, arguing as in the case $G_0=\PSL_n(q)$, we get 
\begin{equation} \label{JJJ}
  \SGOpp \le \sum_{\ell=2}^\infty\frac{2\omega_{q^2}(q^{2\ell}-1)}{q^{2\ell}} \le 4\log_2(q)\left(\frac{q^2}{(q^2-1)^2}-\frac{1}{q^2}\right).
 \end{equation}

Suppose that $n\geq 6$, that is, $m\geq 3$. Using Lemma~\ref{importanti}, we have $\SGOp \le \omega(ep(q^2-1))(2q^{-2m^*}+q^{-m^\sharp}+q^{-2})$. 
It follows that $\SGO < 1$ for $q\neq 2$. 
If $q=2$, then  $q^2-1=3$, $q^4-1=15=3\cdot 5$ and $q^6-1=63=3^2\cdot 7$. Hence $\omega_{q^2}(q^4-1)=1$ and $\omega_{q^2}(q^6-1)=1$, and so for the first two terms of the summation in~\eqref{JJJ} we can take $2/q^4$ and $2/q^6$. Therefore
\[
\SGOpp\le 4\left(\frac{q^2}{(q^2-1)^2}-\frac{1}{q^2}-\frac{2}{q^4}-\frac{3}{q^6}\right)+\frac{2}{q^4}+\frac{2}{q^6}.
\]
For $q=2$, it follows that  $\SGO< 1$ unless $n\in\{6,7,8\}$.
We verify the cases $G_0 = \PSU_6(2)$, $\PSU_7(2)$ and $\PSU_8(2)$ directly in \texttt{magma}.

It remains to consider $G_0=\PSU_5(q)$. Here we observe that $\SGOp \le \omega(ep(q^2-1))\frac{4}{3q}$ by Theorem~\ref{LSthm}.
Using~\eqref{JJJ}, it is easy to check that $\SGO < 1$ for $q\geq 5$. 
 The cases $q=2,3,4$  
 follow by direct computations in \texttt{magma}.

\smallskip

\noindent\textsc{Case $G_0$ is orthogonal. }From Remark~\ref{smallorth}, we have $n\geq 7$. Using Lemma~\ref{importanti} and arguing exactly as in the linear and unitary case, we have $\SGOp \le  \omega(ep(q-1))(2/q^{m^*}+1/q^{m^\sharp}+1/q)$ and 
\begin{equation} \label{JJJJ}
  \SGOpp \le \sum_{\ell=2}^\infty\frac{2\omega_{q}(q^{\ell}-1)}{q^{\ell}} \le 2\log_2(q)\left(\frac{q}{(q-1)^2}-\frac{1}{q}\right).
 \end{equation}
It follows that $\SGO< 1$ for $q \ge 7$.
If  $q=5$, then $\omega_q(q^2-1)=1$ and hence for the first term in the summation in~\eqref{JJJJ} we can take $2/q^2$. In particular, 

\[
\SGOpp\le2\log_2(q)\left(\frac{q}{(q-1)^2}-\frac{1}{q}-\frac{2}{q^2}\right)+\frac{2}{q^2}.
\]
With this upper bound on $\SGOpp$, it follows that $\SGO<1$.
For $q=4$, we have 
$\omega_q(q^2-1)=\omega_q(q^3-1)=1$ and the usual argument shows that   $\SGO< 1$.
Similarly, if  $q=3$, then $\omega_q(q^2-1)=0$ and $\omega_q(q^3-1)=\omega_q(q^4-1)=1$ and it follows that that $\SGO<1$ unless $G_0 = \POmega_7(3), \POmega_8^+(3)$. 
We eliminate the remaining two cases with the invaluable help of \texttt{magma}.
%

It remains to deal with the case $q=2$. In particular,  $n$ is even. 
If $n\geq 10$, then~\cite[Tables~$2.1$C and~D]{KL} shows that $\Aut(G_0)$ is the  isometry  group $I(V)$ of the orthogonal space $V$, and hence  $G \le I(V)$. When $n=8$, the triality automorphism of $\Omega_8^+(2)$ does not preserve the action of $G_0$ on the totally singular subspaces, and hence $G\leq I(V)$ in this case as well. Therefore the proof follows from~\cite[Theorem~$1.2$]{EZ}.
\end{proof}

\section{{\bf Case}~$(ii)$ of Proposition~\ref{reduction}: action on non-degenerate $1$-subspaces}\label{caseii}

Here we let $G$ be a primitive group with socle $G_0$ acting on a set $\Omega$ consisting of non-degenerate $1$-subspaces  of $V$. The various possibilities for $\Omega$ (depending on the type of $G_0$) are discussed in Remark~\ref{remarkorbits}.


\begin{proposition}\label{NS1}If $g\in G$, then $g$ has a regular orbit on $\Omega$.
\end{proposition}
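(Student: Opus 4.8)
The plan is to follow the same template as in Propositions~\ref{totiso} and~\ref{caseiiiPSLPSU}, splitting $\SGO$ as $\SGOp + \SGOpp$ and bounding the two pieces separately. For the primes $r \mid ep(q_0-1)$ entering $\SGOp$, I would use the Liebeck--Saxl bound $\fpr_\Omega(g^{|g|/r}) \le 4/(3q)$ from Theorem~\ref{LSthm} (checking that the $G$-sets of non-degenerate $1$-subspaces are not among the exceptions in~\cite[Table~$1$]{LS}, or else substituting the amended bound there), together with $\omega(ep(q_0-1)) \le 2\log_2 q$ roughly. For the primes $r \nmid ep(q_0-1)$ entering $\SGOpp$, I would invoke Proposition~\ref{fprell}: in the unitary case $\fpr_\Omega(x) \le 2/q_0^{\ell'} \le 2/q_0^\ell$, and in the orthogonal case (with $n \ge 7$, legitimate by Remark~\ref{smallorth}) $\fpr_\Omega(x) \le 36/(13 q_0^{\ell'}) \le 36/(13 q_0^{\ell})$. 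Bounding $\omega_{q_0}(q_0^\ell-1) \le \ell \log_2 q_0$ and summing the geometric-type series via Lemma~\ref{series} gives
\[
\SGOpp \le c \log_2(q_0)\left(\frac{q_0}{(q_0-1)^2} - \frac1{q_0}\right)
\]
with $c = 2$ (unitary) or $c = 72/13$ (orthogonal). Combining the two bounds, one checks that $\SGO < 1$ for all but finitely many $(n,q)$ by an elementary computation; the symplectic case $q$ odd (where $\Omega$ consists of non-degenerate $1$-subspaces of one of the two isometry types) also needs the analogue of Proposition~\ref{fprell}, or one can fall back on the uniform $4/(3q)$ bound since the symplectic group has no subspace action of this kind except via $\Sp_{2m}(q) \le \GO_{2m+1}(q)$ identifications already handled.

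The finitely many exceptional $(n,q)$ are then treated exactly as in the earlier sections: first one replaces the crude $\omega(|g|) \le \omega(|A|)$ or $\log_2(o)$ bound by the exact value of $\max\{\omega(|g|) : g \in \Aut(G_0)\}$ computed in \texttt{magma}, which kills most remaining cases; for the stubborn residue one uses that a prime $r$ with $r \nmid |G_\alpha|$ contributes $\fpr_\Omega(g^{|g|/r}) = 0$ by Lemma~\ref{obvious3}, reading off the prime divisors of the point stabilizer order from~\cite[Chapter~4]{KL}; and for the handful of genuinely small groups one constructs the primitive permutation representation on non-degenerate $1$-subspaces in \texttt{magma} and verifies directly that every conjugacy class representative has a regular cycle. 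A subtlety worth flagging: when $G_0$ is orthogonal and $q$ is odd, $G_0$ has \emph{two} orbits ${\bf NS_1^+}$ and ${\bf NS_1^-}$ on non-degenerate $1$-subspaces (Remark~\ref{remarkorbits}), so for $n$ even one must have $G \le \mathrm{P\Gamma O}_n^\varepsilon(q)$ and $\Omega$ is a single $G_0$-orbit of size $|{\bf NS_1^\varepsilon}|$, while for $n$ odd $\Omega$ is one of the $\eta$-point orbits; in every sub-case Proposition~\ref{fprell} already gives the bound $36/(13q_0^{\ell'})$ using $|\Omega| \ge q^{(n-1)/2}(q^{(n-1)/2}-1)/2$, so nothing new is needed.

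The claim that \emph{every} $g \in G$ has a regular orbit (not merely "either a regular orbit or one of the $\Alt$ exceptions", as in the preceding propositions) is consistent with the fact that the excluded actions $(\Alt(5),\{1,\ldots,5\})$, $(\Alt(6),\{1,\ldots,6\})$, $(\Alt(8),\{1,\ldots,8\})$ are not actions on non-degenerate $1$-subspaces under the relevant classical-group identifications, so no exception survives here; this should be remarked explicitly. By Lemma~\ref{basic} it suffices throughout to treat $g$ of square-free order, which is what makes the $\omega(|g|)$-type bounds effective.

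The main obstacle I expect is the orthogonal case in small dimension and small characteristic, particularly $q = 2$ and $q = 3$ with $n \in \{7, 8, 10, \dots\}$: the constant $72/13$ in the $\SGOpp$ bound is considerably worse than the $2$ available in the unitary and totally-singular settings, so the elementary inequality $\SGO < 1$ will fail for a longer list of $(n,q)$, and the \texttt{magma} verification for groups like $\POmega_8^+(3)$, $\POmega_{10}^+(2)$, $\POmega_8^+(2)$ acting on non-degenerate $1$-subspaces — which can be sizable — may require the refinements (exact $\omega$ over $\Aut(G_0)$, zero-fpr primes, or the full Burness bound of Theorem~\ref{timthm} as in the $\POmega_8^+(3)$ argument of Theorem~\ref{thrmnonsubspaceactions}) rather than a single clean estimate.
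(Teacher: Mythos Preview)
Your overall template---split $\SGO=\SGOp+\SGOpp$, bound $\SGOpp$ via Proposition~\ref{fprell} and the geometric series of Lemma~\ref{series}, then mop up small $(n,q)$---matches the paper's proof closely. Two differences are worth noting. First, for $\SGOp$ the paper does not use the Liebeck--Saxl bound $4/(3q)$ but instead the sharper subspace bound $f(n,q)$ from \cite[Proposition~3.16]{GK} (applied with $k=n-1$), which for the orthogonal case gives $f(n,q)=2/q^{m^*}+2/q^{m^\sharp}+1/q$; this is what makes the inequality $\SGO<1$ go through for moderate $q$ without too many residual cases. Second, your constant for the orthogonal $\SGOpp$ should be $36/13$, not $72/13$ (and in fact for $q$ even Proposition~\ref{fprell} gives the better constant $2$). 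Also, your proposed fallback to Theorem~\ref{timthm} is not available here: that theorem is for \emph{non-subspace} actions, and $\Omega={\bf NS_1}$ is a subspace action.

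The genuine gap is the orthogonal case $q=2$. Your claim that the crude estimate leaves only \emph{finitely many} exceptional $(n,q)$ is false there: the bound $\SGOpp\le 2\sum_{\ell\ge 2}\omega_2(2^\ell-1)/2^\ell$ does not decay with $n$, and together with the $\SGOp$ contribution the total exceeds $1$ for \emph{every} $n$. No amount of \texttt{magma} verification handles infinitely many groups, and the refinements you list (exact $\omega_q(q^\ell-1)$, zero-fpr primes) do not close the gap uniformly in $n$. The paper resolves this in one stroke: for $q=2$ and $n\ge 8$ one has $G\le I(V)$ (the triality automorphism of $\Omega_8^+(2)$ does not preserve ${\bf NS_1}$, and for $n\ge 10$ one has $\Aut(G_0)=I(V)$), so \cite[Theorem~1.2]{EZ} applies directly. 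You need to invoke this, or supply an alternative argument covering all $n$ at $q=2$.
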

\begin{proof}
We subdivide the proof depending on the type of $G_0$. Here $G_0$ is either unitary or orthogonal.

\smallskip

\noindent\textsc{Case $G_0=\PSU_n(q)$. }
We may assume that $ n\ge 5$ by Theorem~\ref{smalldimension}.
 First suppose that $n=5$. Let $\omega$ be the number of prime divisors of $|\Aut(G_0)|$. Clearly, it suffices to show that $4\omega /(3q)<1$. Now $\omega\leq \log_2(|\Aut(G_0)|)$ and we see that the inequality $4\log_2(|\Aut(G_0)|)/(3q)<1$ is satisfied for $q\geq 261$. For  $q\leq 260$, we can compute the exact value of $\omega$ and we see that $4\omega /(3q)<1$ when $q\geq 9$. For $q\in  \{7,8\}$, replacing $\omega$ with $\omega'=\max(\omega(|h|)\mid h\in \Aut(G_0))$,  we obtain  $4\omega'/(3q)<1$. For $q=5$, we check in \texttt{magma} that the number of prime divisors of $|g|$ is at most $3$, unless $|g|=210,630$, in which case the prime divisors of $|g|$ are $2,3,5,7$. By Proposition~\ref{fprell}, for $r=7$ we have $\fpr_{\Omega}(g^{|g|/7}) < 2/25^{3}$ and with this refinement, $\SGO  \le 3 \cdot 4/(3q)  +2/25^{3}<1$.
For $q\in \{2,3,4\}$, we construct the explicit permutation representation of the action of $\mathrm{P}\Gamma \mathrm{U}_5(q)$ on $\Omega=\mathbf{NS}_1$ and check that every element has a regular orbit. 
 
We now suppose that $n\geq 6$ so that $m\geq 3$. For every element $x\in G\setminus\{1\}$, we see from~\cite[Proposition~$3.16$]{GK} that 
\begin{equation} \label{fprNS1}
\mathrm{fpr}_\Omega(x)< f(n,q):=
\begin{cases}
2/q^{2(m-2)}+1/q^{2m-1}+1/q^{2(m-1)}+1/q^2&\textrm{if }n=2m \textrm{ and }m\geq 3,\\
1/q^{2m+1}+1/q^{2m+1}+1/q^{2m}+1/q^2&\textrm{if }n=2m+1 \textrm{ and }m\geq 3.
\end{cases}
  \end{equation}

By \eqref{fprNS1}, we have $\SGOp \le \omega(ep(q^2-1))f(n,q)$ and, from Proposition~\ref{fprell}, we have 
\begin{equation}\label{JJJj}
\SGOpp\leq \sum_{\ell=2}^\infty\frac{2\omega_{q^2}(q^{2\ell}-1)}{q^{2\ell}} \le \sum_{\ell=2}^\infty\frac{2\ell\log_2(q^2)}{q^{2\ell}} =4\log_2(q)\left(\frac{q^2}{(q^2-1)^2}-\frac{1}{q^2}\right).
\end{equation} 
It follows that $\SGO<1$ for $q\neq 2$. 
(To see this, for $q\geq 17$, observe that $ep(q^2-1)\leq q^3$ and hence $\omega(ep(q^2-1))\leq \log_2(q^3)$. For $q< 17$, use the explicit value of $\omega(ep(q^2-1))$.) Now suppose that $q=2$. Then $q^2-1=3$, $q^4-1=3\cdot 5$ and $q^6-1=3^2\cdot 7$. Hence $\omega_{q^2}(q^4-1)=1$ and $\omega_{q^2}(q^6-1)=1$, and so for the first two terms of the summation in~\eqref{JJJj} we can take $2/q^4$ and $2/q^6$. Thus
\[
\SGOpp\leq 4\left(\frac{q^2}{(q^2-1)^2}-\frac{1}{q^2}-\frac{2}{q^4}-\frac{3}{q^6}\right)+\frac{2}{q^4}+\frac{2}{q^6},
\]
and now $\SGO< 1$ for every $n\geq 9$. 
 Finally for the groups $G$ with socle $G_0 =\mathrm{PSU}_n(2)$ and $n=6,7,8$,
the proof follows by calculating $\SGO$ precisely in \texttt{magma}.

\medskip

\noindent\textsc{Case $G_0$ is orthogonal. }
From Remark~\ref{smallorth}, $n\geq 7$. For every $x\in G\setminus\{1\}$,~\cite[Proposition~$3.16$]{GK} (applied with $k=n-1$) gives 
\begin{equation} \label{fepsnq}
\mathrm{fpr}_\Omega(x)< f(n,q)\quad\textrm{where}\quad f(n,q):=\frac{2}{q^{m^*}}+\frac{2}{q^{m^\sharp}}+\frac{1}{q}
\end{equation}
(where $m^\sharp$ and $m^*$ are as in Table~\ref{tablemm}).
Using \eqref{fepsnq} and \Cref{fprell} and arguing as in the unitary case we find that $\SGOp \le \omega(ep(q-1))f(n,q)$ and 
\begin{equation} \label{JJJJjj}
 \SGOpp \le  \sum_{\ell=2}^\infty\frac{36\omega_{q}(q^{\ell}-1)}{13q^{\ell}}
\le\frac{36}{13}\log_2(q)\left(\frac{q}{(q-1)^2}-\frac{1}{q}\right).
 \end{equation}
It follows that $\SGO < 1$ for $q \ge 7$, unless $G_0 \in\{\POmega_7(7), \POmega_{8}^{+}(7)\}$. 
In the case $G_0\in \{ \POmega_7(7),\POmega_{8}^{+}(7)\}$, we note that the only primes dividing $|G|$ but not $ep(q-1)$ are $5,19,43$. Thus $ \SGOpp \le 3 \cdot  \frac{36}{13q^{2}}$ and $\SGO < 1$.


Now suppose that $q=5$. We have $\omega(ep(q-1))=2$ and $\omega_q(q^2-1)=1$, so $\SGOp \le 2 \cdot 4/(3q) = 8/15$
and 
\begin{equation*} 
 \SGOpp\le \frac{36}{13}\log_2(q)\left(\frac{q}{(q-1)^2}-\frac{1}{q}-\frac{2}{q^2}\right)+\frac{36}{13q^2}<\frac{7}{15}.
 \end{equation*}
Thus $\SGO<1$.

If $q=4$, then $\omega(ep(q-1))=2$ and $\omega_q(q^2-1)= \omega_q(q^3-1)=\omega_q(q^{4}-1)=1$, so $\SGOp\leq 2\cdot 4/(3q)=2/3$ and 
\[ \SGOpp \le \frac{36}{13}\log_2(q)\left(\frac{q}{(q-1)^2}-\frac{1}{q} - \frac{2}{q^2} - \frac{3}{q^3} - \frac{4}{q^4} \right)+\frac{36}{13q^2}+\frac{36}{13q^3}+\frac{36}{13q^4} <\frac{1}{3}.\]

For $q=3$, we have $\omega(ep(q-1))=2$ and $\SGOp\leq 2\cdot f(n,q)$. Moreover, $\omega_q(q^2-1)=0$ and $\omega_q(q^i-1)=1$, for $i = 3,4,5,6$. The usual argument shows that $\SGO <1$, unless $G_0\in \{\POmega_8^+(3),\POmega_{10}^+(3),\POmega_7(3),\POmega_9(3),\POmega_8^-(3)\}$. For these remaining groups, the proof follows by calculating $\SGO$ precisely in \texttt{magma}.

Finally, suppose that $q=2$, and hence $n$ is even. If $n\geq 10$, then~\cite[Tables~2.1C and D]{KL} shows that $\Aut(G_0)$ is the isometry group $I(V)$ of the orthogonal space $V$, and hence $G\leq I(V)$. When $n=8$, the triality automorphism of $\Omega_8^+(2)$ does not preserve the action of $G_0$ on $\Omega={\bf NS}_1$, and hence $G\leq I(V)$ in this case as well. Therefore the proof follows from~\cite[Theorem~$1.2$]{EZ}.
\end{proof}

\section{{\bf Case}~$(iv)$ of Proposition~\ref{reduction}: action on non-degenerate $2$-subspaces with no singular point}\label{Caseiv}

Here we let $G$ be a primitive group with socle $G_0$ acting on the set $\Omega$ consisting of all non-degenerate $2$-subspaces of $V$ that contain no singular point. This means that the quadratic form on $V$ restricted to each of these $2$-dimensional subspaces is anisotropic~(see~\cite[Section~$6.3$]{Cameron} for example). In particular, $G_0$ is an orthogonal group. Moreover, in the light of Remark~\ref{smallorth}, $G_0$ is either $\POmega_{2m+1}(q)$ (with $m\geq 3$), or $\POmega_{2m}^+(q)$ (with $m\geq 4$) or $\POmega_{2m+2}^-(q)$ (with $m\geq 3$). 
We recall that 
\[
|\Omega|=
\begin{cases}
 \frac{1}{2}q^{2(m-1)}(q^m-1)(q^{m-1}-1)(q+1)^{-1}&\textrm{if }G_0=\POmega_{2m}^+(q),\\
\frac{1}{2}q^{2m}(q^{m+1}+1)(q^{m}+1)(q+1)^{-1}&\textrm{if }G_0=\POmega_{2m+2}^-(q),\\
\frac{1}{2}q^{2m-1}(q^{2m}-1)(q+1)^{-1}&\textrm{if }G_0=\POmega_{2m+1}(q).
\end{cases}
\]
(these formulae can be deduced from the index of $G_0\cap (\mathrm{O}_2^-(q)\perp \mathrm{O}_{n-2}^\varepsilon(q))$ in $G_0$). When necessary we write $\Omega_n^+$, $\Omega_n^-$ and $\Omega_n^\circ$ for $\Omega$, according to the corresponding type of $G_0$. By writing $|\Omega|$ as a function of $n$ and $q$ (so that $m=n/2$, $n/2-1$ or $(n-1)/2$ respectively), we get
\begin{equation}\label{symplify}
|\Omega_n^-|>|\Omega_n^\circ|>|\Omega_n^+|.
\end{equation}
\begin{lemma}\label{new2ani}
For $x\in G\setminus\{1\}$, we have 
\[\mathrm{fpr}_\Omega(x)\leq f(n,q)\quad\textrm{where}\quad f(n,q):=3/q^{n/2-2}+1/q^{n/2-1}+1/q^2.\]
\end{lemma}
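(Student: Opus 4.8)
\textbf{Proof proposal for Lemma~\ref{new2ani}.}
The plan is to bound $\fpr_\Omega(x)$ by estimating $|\Fix_\Omega(x)|$ from above and $|\Omega|$ from below, exactly in the spirit of the proofs of Propositions~\ref{fprell} and~\ref{NS1}. A non-degenerate $2$-subspace $W$ with no singular point is fixed by $x$ if and only if $W$ is $x$-invariant (as a subspace), so we must count the $x$-invariant anisotropic $2$-subspaces. Lifting $x$ to an element $\hat x$ of the relevant classical group on $V$ and decomposing $V$ with respect to $\hat x$, one sees that such a $W$ is controlled by how $\hat x$ acts on it: either $W \le \cent V{\hat x}$, or $W$ meets $[V,\hat x]$ nontrivially in a way forced by $\hat x$ having an invariant $2$-space. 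In all cases the number of $x$-invariant $2$-subspaces is crudely at most (number of $2$-subspaces of $\cent V{\hat x}$) plus a controlled correction coming from eigenspaces of $\hat x$ on the algebraic closure of degree $\le 2$; a standard counting gives $|\Fix_\Omega(x)| \le 3 q^{n-4}$ roughly, coming from the worst case where $\hat x$ has large fixed space together with one or two invariant anisotropic planes.

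The cleanest route, and the one I would actually take, is not to reprove fixed-point counts from scratch but to invoke the fixed-point-ratio bound of Guralnick--Kantor-type already cited in this paper. Concretely, the proof of Proposition~\ref{NS1} used \cite[Proposition~$3.16$]{GK} applied with $k = n-1$ to get the bound \eqref{fepsnq} for non-degenerate $1$-subspaces; the action on non-degenerate $2$-subspaces with no singular point is the analogous subspace action for $k=2$ (equivalently $k=n-2$), and \cite[Proposition~$3.16$]{GK} (or the companion results in \cite[Section~3.2]{GK}) yields a bound of the shape $\fpr_\Omega(x) < c_1/q^{n/2-2} + c_2/q^{n/2-1} + c_3/q^2$ with small absolute constants. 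First I would pin down the exact statement of the relevant proposition in \cite{GK} for $k = 2$, extract its upper bound, and then simplify it to the uniform expression $f(n,q) = 3/q^{n/2-2} + 1/q^{n/2-1} + 1/q^2$, using \eqref{symplify} to replace $|\Omega|$ by its smallest value $|\Omega_n^+|$ when it is convenient to have a single bound valid for all three types of $G_0$.

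The three leading exponents $n/2-2$, $n/2-1$, $2$ are exactly what one expects: $n/2-2$ is (roughly) the codimension-type quantity $m^\ast$ of Table~\ref{tablemm} shifted by $2$ because we look at $2$-subspaces rather than $1$-subspaces, $n/2-1$ is the corresponding $m^\sharp$-type term, and the $1/q^2$ term is the ``trivial'' contribution (fixing all of an anisotropic plane pointwise). The verification therefore reduces to: (a) reading off the constants from \cite{GK}, and (b) checking the elementary inequalities needed to absorb the various secondary terms of the \cite{GK} bound into the three displayed ones — e.g.\ bounding sums like $1/q^{n/2-2}+1/q^{n/2-1}+\cdots$ by a single $3/q^{n/2-2}$, which holds comfortably since $q \ge 2$ and $n \ge 7$ by Remark~\ref{smallorth}.

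The main obstacle is bookkeeping rather than mathematics: \cite[Proposition~$3.16$]{GK} is stated for general non-degenerate $k$-subspaces and one must instantiate it correctly at $k=2$, taking care that $\Omega$ here is only the anisotropic planes (those containing no singular point), which for $q$ even or $q$ odd is precisely the $G_0$-orbit of non-degenerate $2$-subspaces of minus type, and matching the resulting orbit sizes with the formulas for $|\Omega_n^\varepsilon|$ recorded above. Once the correct specialization is identified, the constants $3$, $1$, $1$ in $f(n,q)$ follow from a short direct estimate, and no deep new input is needed beyond what the excerpt already cites.
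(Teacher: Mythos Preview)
Your proposal is correct and follows essentially the same approach as the paper: the paper's entire proof is to invoke \cite[Proposition~$3.16$]{GK} (together with the comment following its proof) with $k=d-2$, i.e.\ $k=n-2$, and then observe that the worst of the three resulting bounds occurs for $G_0=\POmega_{2m}^+(q)$ with $n=2m$, which simplifies to $f(n,q)$. Your second through fourth paragraphs are exactly this argument (your use of \eqref{symplify} to pass to the smallest $|\Omega|$ is the same as the paper's identification of the worst case), and your first paragraph's direct counting is unnecessary.
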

\begin{proof}
This follows from~\cite[Proposition~$3.16$]{GK} (and the comment following its proof) applied with $k=d-2$ and by writing $m$ as a function of $n$ in each case. It is straightforward to see that the worst upper bound for $\fpr_\Omega(x)$ arises when $n=2m$ and $G_0=\POmega_{2m}^+(q)$. 
\end{proof}
\begin{proposition}\label{prop}
If $g\in G$, then $g$ has a regular orbit on $\Omega$.
\end{proposition}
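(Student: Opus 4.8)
The plan is to prove, via Lemma~\ref{basic}, that $\SGO<1$ for every $g\in G$ of square-free order; an element of prime order moves some point and so has a regular cycle by Lemma~\ref{lemma:apeman}, so we may assume $\omega(|g|)\geq 2$. Since $G_0$ is orthogonal we have $q_0=q$, and (recalling $n\geq 7$ for the groups at hand) I would split
\[\SGO=\SGOp+\SGOpp\]
according to whether the prime $r\mid|g|$ divides $ep(q-1)$ or not, and bound the two pieces by different means, exactly as in the proofs of Propositions~\ref{caseiiiPSLPSU} and~\ref{NS1}.

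For $\SGOp$ I would simply feed Lemma~\ref{new2ani} into the definition to get $\SGOp\leq\omega(ep(q-1))\,f(n,q)$, using $\omega(ep(q-1))\leq\log_2(ep(q-1))\leq\log_2(q^3)$ in general and the exact value of $\omega(ep(q-1))$ (which is $1$ for $q=2$ and $2$ for $q=3$) for the small fields. For $\SGOpp$ I would first establish a Case-(iv) analogue of Proposition~\ref{fprell}: if $x\in G$ has prime order $r$ with $r\nmid ep(q-1)$, then $x$ is semisimple, lifts to $x'\in\GL(V)$ with $\ell:=\mathrm{ord}_r(q)\geq 2$, and by~\eqref{verbose} every $1$-subspace fixed by $x$ lies in $\cent V{x'}$; hence a $2$-space $W\in\mathrm{Fix}_\Omega(x)$ on which $x$ acts trivially lies in $\cent V{x'}$ (of dimension $n-\ell'$, $\ell'\geq\ell$), while the only alternative is that $\ell=2$ and $W$ is an $x'$-irreducible $2$-dimensional submodule of $[V,x']$. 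Counting anisotropic non-degenerate $2$-subspaces inside $\cent V{x'}$ against the formulae for $|\Omega_n^{\pm}|,|\Omega_n^{\circ}|$ and using~\eqref{symplify}, and noting that the irreducible $2$-spaces contribute a term negligible compared with $|\Omega|$, this gives $\mathrm{fpr}_\Omega(x)\leq c/q^{2\ell'}$ up to a negligible summand, for a small absolute constant $c$. Summing over the relevant primes and using $\omega_q(q^\ell-1)\leq\ell\log_2 q$ together with Lemma~\ref{series} then yields $\SGOpp\leq c\log_2(q)\bigl(q^2/(q^2-1)^2-1/q^2\bigr)$ plus a harmless tail, which is tiny for all $q$.

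Combining the two bounds gives $\SGO<1$ for all but finitely many pairs $(n,q)$, and I would pin down this list by computer as in the earlier sections. For the survivors I would refine in the usual ways: replace $\omega(ep(q-1))$ and the first few $\omega_q(q^\ell-1)$ by their exact (often $0$ or $1$) values, note that primes $r\nmid|G_\alpha|$ — here $G_\alpha\cap G_0=G_0\cap(\O_2^-(q)\perp\O_{n-2}^{\varepsilon}(q))$ — contribute $\mathrm{fpr}=0$ by Lemma~\ref{obvious3}, and where necessary replace $\omega(|g|)$ by $\max\{\omega(|h|):h\in\Aut(G_0)\}$ computed in \texttt{magma}. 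A short list of genuinely small groups (headed by $\POmega_7(3)$, $\POmega_8^{\pm}(3)$, $\POmega_9(3)$, $\POmega_8^{\pm}(2)$, $\POmega_{10}^{\pm}(2)$) would then be settled directly in \texttt{magma}, by constructing the action of each almost simple $G$ with the relevant socle on the set of anisotropic non-degenerate $2$-subspaces and checking that every conjugacy class representative has a regular cycle. Finally, for $q=2$ I would observe, as in Propositions~\ref{caseiiiPSLPSU} and~\ref{NS1}, that $\Aut(G_0)$ is the isometry group $I(V)$ for $n\geq 10$ by~\cite[Tables~2.1C and~D]{KL}, and that for $n=8$ the triality automorphism does not preserve the action on anisotropic $2$-subspaces; hence $G\leq I(V)$ and the conclusion follows from~\cite[Theorem~1.2]{EZ}.

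The main obstacle is that, unlike in Section~\ref{Case3}, the crude estimate $\SGO\leq\omega(|\Aut(G_0)|)f(n,q)\leq a(n,q)f(n,q)$ is useless here: the summand $1/q^2$ in $f(n,q)$ does not decay with $n$ while $a(n,q)$ grows, so for $q\in\{2,3\}$ this bound exceeds $1$ for every $n$. Making the argument work therefore forces the $\SGOp+\SGOpp$ split together with the sharp semisimple fixed-point-ratio estimate, and the two delicate points are (i) obtaining a clean absolute constant $c$ in the Case-(iv) analogue of Proposition~\ref{fprell} — which requires attention to the $\O_2^{\pm}$-type of the form induced on $\cent V{x'}$ and to the exceptional family of $x'$-irreducible fixed $2$-spaces occurring when $\ell=2$ — and (ii) the bookkeeping needed to isolate the (short but nonempty) list of small-rank orthogonal groups over $\mathbb{F}_2$ and $\mathbb{F}_3$ that must be finished off by machine.
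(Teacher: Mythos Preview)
Your overall strategy is sound and would succeed, but it differs from the paper's proof in one pivotal choice that makes your version noticeably harder. The paper splits $\SGO$ according to whether $r\mid ep(q^{2}-1)$ rather than $r\mid ep(q-1)$. This forces $\ell\geq 3$ in the second sum, and then the key geometric observation becomes one line: since $\dim U=2<\ell$, any $x'$-invariant $U\in\Omega$ must lie in $\cent V{x'}$, full stop. There is no ``exceptional family of $x'$-irreducible fixed $2$-spaces'' to worry about, and the clean bound $\fpr_\Omega(x)\leq 4/q^{2\ell'}$ drops out immediately. The resulting estimate
\[
\SGO\leq\omega(ep(q^{2}-1))\,f(n,q)+4\log_2(q)\Bigl(\tfrac{q^{2}}{(q^{2}-1)^{2}}-\tfrac{1}{q^{2}}-\tfrac{2}{q^{4}}\Bigr)
\]
is tight enough that the only survivors are $q=2$ (handled via \cite[Theorem~1.2]{EZ} exactly as you propose) and the single pair $(n,q)=(7,3)$, which is dispatched in \texttt{magma}.

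Your route, splitting at $ep(q-1)$, admits $\ell=2$, and then an anisotropic $2$-space fixed by $x$ can genuinely be an irreducible $\langle x'\rangle$-submodule of $[V,x']$ (indeed it is automatically of $-$~type, since an element of odd order $r\mid q+1$ lies in $\O_2^{-}(q)$ but not $\O_2^{+}(q)$). Your claim that these contribute negligibly is ultimately correct for $q\geq 3$ and $n\geq 7$, but it is not free: one must bound the number of such submodules (at most $(q^{\ell'}-1)/(q^{2}-1)$ in each homogeneous component) and check the resulting term against $|\Omega|$. Moreover, summing your $\SGOpp$ from $\ell=2$ rather than $\ell=3$ costs you precision and inflates the list of cases requiring direct verification. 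Note, incidentally, that for $q=3$ one has $\omega_q(q^{2}-1)=0$, so there are in fact no $\ell=2$ primes and your split coincides with the paper's; exploiting this would prune your \texttt{magma} list back down to essentially the paper's.
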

\begin{proof}
 Let $x\in G$ of prime order $r$ and suppose that $r$ is coprime to $ep(q^2-1)$; in particular $x\in G_0$ is semisimple.
Let $\ell$ be the minimum positive integer such that $r\mid (q^\ell-1)$. In particular we have $\ell\geq 3$. Let $x'$ be a semisimple element of $\GL(V)$ projecting to $x$. As in Proposition~\ref{fprell} we observe that  $V=[V,x']\oplus \cent V {x'}$ and  $\ell' =\dim_{\mathbb{F}_{q}}[V,x']\geq \ell$. 

Let $U\in \Omega$ with $U^x=U$. Since $\dim U=2$ and  $U$ is $x'$-invariant, we see that $x'$ acts trivially on $U$ and $U \leq \cent V {x'}$. This proves that
\begin{equation}\label{verbose2}\textrm{if }U^x=U, \textrm{ then }U\leq \cent V {x'}.
\end{equation}

The quadratic form $Q$ on $V$ restricts to a non-degenerate quadratic form $Q'$ on $\cent V {x'}$. Therefore from~\eqref{verbose2} we obtain that the number of elements of $\Omega$ fixed by $x$ is $|\Omega_{n-\ell'}^\varepsilon|$, where $\varepsilon\in\{+,-,\circ\}$.
In particular,~\eqref{symplify} implies that for fixed  $\ell'$, the largest fixed-point-ratio occurs when $G_0=\POmega_{2m}^+(q)$ and $Q'$ is of $-$ type (that is, when $Q'$ has Witt index $(n-\ell')/2-1$). If $\ell'\geq n-1$, then $\dim_{\mathbb{F}_{q}}\cent V {x'}\leq 1$ and $\fpr_\Omega(x)=0$, and if $\ell'\leq n-2$, it follows that
\[\mathrm{fpr}_\Omega(x)\leq \frac{q^{n-\ell'-2}(q^{(n-\ell')/2}+1)(q^{(n-\ell')/2-1}+1)}{q^{n-2}(q^{n/2}-1)(q^{n/2-1}-1)}\leq \frac{4}{q^{2\ell'}}.\]

 With an argument similar to the cases that we discussed so far, we have
\begin{eqnarray}\label{J-J}\nonumber
\SGO&=&
\sum_{\substack{r\mid |g|\\ r\mid ep(q^2-1)}}
\mathrm{fpr}_\Omega(g^{|g|/r})+
\sum_{\substack{r\mid |g|\\ r\nmid ep(q^2-1)}}
\mathrm{fpr}_\Omega(g^{|g|/r})\\\nonumber
&\leq& \omega(ep(q^2-1))f(n,q)+\sum_{\ell=3}^\infty\frac{4\omega_q(q^\ell-1)}{q^{2\ell}}\\\nonumber
&\leq&\omega(ep(q^2-1))f(n,q)
+4\log_2(q)\left(\frac{q^2}{(q^2-1)^2}-\frac{1}{q^2}-\frac{2}{q^4}\right),
\end{eqnarray} 
where $f(n,q)$ is as in Lemma~\ref{new2ani}.

Since $\omega(ep(q^2-1))\leq \log_2(q^3)$ and $n\geq 7$, we have $\SGO< 1$ for $q>9$. For $q\leq 9$, we compute the exact value of $\omega(ep(q^2-1))$ and we see that $\SGO<1$, unless $q=2$ or $(n,q)=(7,3)$. The case $(n,q)=(7,3)$ can be easily eliminated using \texttt{magma}. Finally if $q=2$, then $G$ is contained in the isometry group of $ I(V)$ and hence we can apply~\cite[Theorem~$1.2$]{EZ}.
\end{proof}

\section{{\bf Case}~$(vi)$ of Proposition~\ref{reduction}: exceptional subspace actions of $\Sp_{n}(2^{e})'$}\label{casevi}

In this section $G_0=\Sp_{n}(q)'$ with $q=2^{e}$, and we let $G$ be a primitive permutation group with socle $G_0$ such that the stabilizer in $G_0$ of a point is isomorphic to either $\mathrm{O}_{n}^+(q)$ or $\mathrm{O}_{n}^-(q)$. We denote these two $G$-sets by $\Omega^+$ and $\Omega^-$, respectively. From \Cref{smalldimension}, we may assume that $n\geq 6$. In particular, $G_0=\Sp_{n}(q)$ and $m=n/2\geq 3$. We have the following useful lemma. 
\begin{lemma}\label{first}
Suppose that $m \ge 3$ and $q=2^e$. Let $x\in \Sp_{2m}(q)\setminus\{1\}$ be semisimple and set $c=\dim_{\mathbb{F}_q}\cent V x$. If $c>0$ or $\varepsilon=+$, then $\fpr_{\Omega^\varepsilon}(x)\leq 4q^{-2m+c}$, and if $c=0$ and $\varepsilon =- $, then $\fpr_{\Omega^\varepsilon}(x)\leq 4/(q^{m}(q^m-1))$.
\end{lemma}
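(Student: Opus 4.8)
The statement concerns the fixed-point ratio of a semisimple element $x\in\Sp_{2m}(q)$ (with $q=2^e$) acting on the coset space $\Omega^\varepsilon$ whose point stabilizer is $\mathrm{O}_{2m}^\varepsilon(q)$. Since $q$ is even, the points of $\Omega^\varepsilon$ are in bijection with the non-degenerate quadratic forms $Q$ on $V$ that polarize to the fixed symplectic form $B$ and have type $\varepsilon$; equivalently $\Omega^\varepsilon$ is one of the two orbits of $\Sp_{2m}(q)$ on such quadratic forms. The first step is to set up this dictionary carefully and record $|\Omega^\varepsilon| = \frac{1}{2}q^m(q^m - \varepsilon\cdot 1)$, so that a point $\omega\in\Omega^\varepsilon$ is fixed by $x$ exactly when the corresponding quadratic form $Q_\omega$ is $x$-invariant.

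The core of the argument is to count the $x$-invariant quadratic forms polarizing to $B$. Write $c = \dim_{\mathbb{F}_q}\cent V x$ and decompose $V = \cent V x \oplus [V,x]$; since $x$ is semisimple (its order is coprime to $p=2$) this is a direct sum of $\langle x\rangle$-modules, and it is orthogonal with respect to $B$. A quadratic form $Q$ polarizing to $B$ is $x$-invariant iff its restriction to each summand is $x$-invariant and the ``cross term'' between the summands is handled compatibly; because $x$ acts without nonzero fixed vectors on $[V,x]$, one shows (a short linear-algebra lemma, using that $1-x$ is invertible on $[V,x]$) that an $x$-invariant quadratic form polarizing to $B$ restricted to $[V,x]$ is \emph{unique} — and one checks it is of plus type on $[V,x]$, since $[V,x]$ has even dimension $2m-c$ and the hyperbolic pairing argument forces plus type. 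On $\cent V x$ the form can be arbitrary subject to polarizing to $B|_{\cent V x}$, giving roughly $q^c$ choices (more precisely the two $\mathrm{O}$-orbits split this count, with the $\pm$ types of $\cent V x$ determined by the constraint $\varepsilon = \varepsilon(\cent V x)\cdot(+)$, i.e. the type on the big space is the product). Hence $|\Fix_{\Omega^\varepsilon}(x)| = |\Omega_c^{\varepsilon'}|$ for the appropriate $\varepsilon'$, which is at most $\frac{1}{2}q^{c/2}(q^{c/2}+1) \le q^c$ when $c>0$, and is $1$ (only $Q|_{\cent V x}$ trivial) or $0$ when $c=0$.

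Dividing by $|\Omega^\varepsilon| = \frac12 q^m(q^m-\varepsilon)$ and bounding crudely then yields the two claimed estimates: when $c>0$ (or $\varepsilon=+$) one gets $\fpr \le q^c \big/ \big(\tfrac12 q^m(q^m-1)\big) \le 4q^{-2m+c}$, absorbing the constant $2$ from $\frac12$ and a further factor from $q^m/(q^m-1)\le 2$; and when $c=0$ with $\varepsilon=-$, the numerator is at most $1$, giving $\fpr \le 1/\big(\tfrac12 q^m(q^m+1)\big) \le 4/(q^m(q^m-1))$. I would state the intermediate counting result as its own displayed inequality and then do the final division in one line.

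The main obstacle will be the precise count of $x$-invariant quadratic forms, in particular pinning down which of the two $\mathrm{O}$-orbits $\Fix_{\Omega^\varepsilon}(x)$ lands in and getting the type bookkeeping right (the relation $\varepsilon = \varepsilon(\cent V x)$ since $[V,x]$ contributes plus type). This requires the characteristic-two fact that, for a symplectic space $W$ on which an invertible operator $g$ acts with $\cent W g = 0$, there is a unique $g$-invariant quadratic form polarizing to $B|_W$, and that it has plus type — a standard but slightly delicate Witt-type computation (it can be quoted from the literature on quadratic forms in characteristic $2$, or proved directly by exhibiting $Q(v) = B(v, (1-g)^{-1}g v)$ or a similar explicit formula and checking it works). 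Everything else is elementary counting and the inequalities $q^{c/2}(q^{c/2}+1)/2 \le q^c$ and $q^m/(q^m-1)\le 2$ valid for $q\ge 2$, $m\ge 1$.
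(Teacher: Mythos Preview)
Your approach works and is genuinely different from the paper's. The paper does not use the quadratic-form dictionary at all; instead it invokes $\fpr_{\Omega^\varepsilon}(x)=|x^G\cap H|/|x^G|$ with $H=\mathrm{O}_{2m}^\varepsilon(q)$, reduces to $x$ of prime order, and uses algebraic-group machinery (connectedness of centralizers in $\Sp_{2m}$ and $\mathrm{PSO}_{2m}$, from~\cite{Tim2,GLS}) to show that $x^G\cap H$ is a single $H$-class when $c>0$ and at most two $H$-classes when $c=0$. It then reads off $|\cent{\Sp_{2m}(q)}{x}|/|\cent{\mathrm{O}_{2m}^\varepsilon(q)}{x}|$ from centralizer tables, arriving at the same ratio $|\Sp_c(q)|/|\mathrm{O}_c^{\varepsilon'}(q)|$ that your direct fixed-point count produces. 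Your route is more elementary and self-contained (no algebraic groups, no class-splitting analysis), though it is specific to characteristic~$2$. Two small corrections are needed. First, $|\Omega^\varepsilon|=\tfrac12 q^m(q^m+\varepsilon)$, not $\tfrac12 q^m(q^m-\varepsilon)$; this sign slip is harmless for your crude inequalities. Second, the unique $x$-invariant quadratic form on $[V,x]$ need \emph{not} have plus type: if $x$ has odd prime order $r\mid q+1$ and acts irreducibly on a $2$-dimensional non-degenerate summand, then $x\in\mathrm{O}_2^-(q)\cong D_{2(q+1)}$ but $x\notin\mathrm{O}_2^+(q)\cong D_{2(q-1)}$, so the invariant form there has minus type. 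Fortunately your bound $|\Fix_{\Omega^\varepsilon}(x)|=|\Omega_c^{\varepsilon'}|\le q^c$ (for $c>0$) and $|\Fix_{\Omega^\varepsilon}(x)|\le 1$ (for $c=0$) holds regardless of $\varepsilon'$, so the argument survives once you drop the unneeded assertion that $[V,x]$ always contributes~$+$; the paper likewise allows $\varepsilon'$ to be either sign.
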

\begin{proof} 
Replacing $x$ by a suitable power, we may assume that $x$ has prime order $r$. Let $\ell$ be the smallest positive integer such that $r \mid q^\ell-1$. Let $1, \omega_1,\ldots, \omega_{r-1}$ be the $r$th roots of unity in $\mathbb{F}_{q^\ell}$ and note that the Frobenius automorphism $\tau:\omega\mapsto \omega^q$ of $\mathbb{F}_{q^\ell}$ induces a permutation of the set $\{1,\omega_1,\ldots,\omega_{r-1}\}$. Let 
 $\{1\}, \Delta_1, \ldots, \Delta_t$ be the orbits of $\langle\tau\rangle$ on $\{1,\omega_1,\ldots,\omega_{r-1}\}$. Note that the multiset of eigenvalues of $x$ is a (multiset) union of these orbits. Observe that $c$ denotes the multiplicity of $\{1\}$  since $c= \dim_{\mathbb{F}_q} \cent V x$, and let $a_i$ denote the multiplicity of $\Delta_i$ in the multiset of eigenvalues of $x$. Thus the multiset of eigenvalues of $x$ can be parametrized by the $(t+1)$-tuple $(c,a_1, \ldots, a_t)$. 
 
Let $H$ denote the stabilizer in $G$ of a point of $\Omega^\varepsilon$. We claim that 
\begin{equation}\label{delta}
|x^G|=|x^{\Sp_n(q)}|\quad\textrm{and}\quad|x^G\cap H|\leq 2^{\delta_{c,0}}|x^{\O_n^\varepsilon(q)}|,
\end{equation}
where the Kronecker delta $\delta_{c,0}=1$ if $c=0$ and $\delta_{c,0}=0$ otherwise. Observe that to prove the inequality in~\eqref{delta} we need to show that $x^G\cap H$ is an $\O_n^\varepsilon(q)$-conjugacy class (when $c \ge 1$) and is the union of at most two $\O_n^{\varepsilon}(q)$-conjugacy classes (when $c=0$). 
Let $\mathbb{F}$ be the algebraic closure of $\mathbb{F}_q$, let $X$ denote the algebraic group $\Sp_{2m}(\mathbb{F})$ and $Y$ the algebraic group $\mathrm{PSO}_{2m}(\mathbb{F})$. Note that~\cite[Lemma~3.34]{Tim2} implies that $\cent X x $ and $\cent Y x$ are both connected. By~\cite[Corollary~3.7]{Tim2}, we see that two elements $g$ and $h$ of $Y_{\sigma} = \mathrm{Inndiag}(\mathrm{P}\Omega_{2m}^{\varepsilon}(q))$ are $Y_{\sigma}$-conjugate if and only if they are $Y$-conjugate and, in turn, by~\cite[4.2.2(j)]{GLS}, this is true if and only if $g$ and $h$ are $\mathrm{P}\Omega_{2m}^{\varepsilon}(q)$-conjugate. Furthermore, it follows from~\cite[Lemma~3.39]{Tim2} that if $g$ and $h$ in $Y$ have the same multiset of eigenvalues and if $1$ is an eigenvalue of $g$, then $g$ and $h$ are $Y$-conjugate and hence $\mathrm{P}\Omega_{2m}^{\varepsilon}(q)$-conjugate. Again from~\cite[Lemma~3.39]{Tim2}, if $g$ and $h$ in $Y$ have the same multiset of eigenvalues and if $1$ is not an eigenvalue of $g$, then there are at most two $Y$-classes to which $g$ and $h$ can belong. This proves~\eqref{delta}.

Combining Lemma~\ref{obvious3} and~\eqref{delta}, we have
 \begin{equation} \label{1eq}
 \mathrm{fpr}_{\Omega^\varepsilon}(x)
=\frac{|x^G\cap H|}{|x^G|} 
\leq\frac{2^{\delta_{c,0}}|x^{\O_{2m}^\varepsilon(q)}|}{|x^{\Sp_{2m}(q)}|}
 = \frac{2^{\delta_{c,0}} |\mathrm{O}_{2m}^{\varepsilon}(q)|}{|\Sp_{2m}(q)|} \!\cdot \! \frac{ |\cent {\Sp_{2m}(q)} x|}{ |\cent {\mathrm{O}_{2m}^{\varepsilon}(q)} x|}.
 \end{equation}
Explicit formulae for the order of the centralizers can be found in~\cite[Table~3.6]{Tim2}. If $\ell$ is odd, then 
\begin{align*} 
  \frac{ |\cent {\Sp_{2m}(q)} x|}{ |\cent {\mathrm{O}_{2m}^{\varepsilon}(q)} x|} = \frac{|\Sp_c(q)|}{2^{\delta_{c,0}-1} |\mathrm{O}_{c}^{\varepsilon}(q)|} &\quad\mbox{ and } &  \frac{|\mathrm{O}_{2m}^{\varepsilon}(q)|}{|\Sp_{2m}(q)|} = \frac{2q^{m^2-m}}{q^{m^2} (q^m+\varepsilon\cdot 1)} = \frac{2q^{-m}}{(q^{m}+\varepsilon\cdot 1)}.
\end{align*}
 Therefore, for $c>0$, the right hand side of~\eqref{1eq} becomes 
 \begin{align*} 
 \frac{2 q^{-m}}{(q^{m}+ \varepsilon\cdot 1)} \cdot \frac{(q^{c/2}+ \varepsilon\cdot 1) }{ q^{-c/2}} \le 4 q^{-2m+c} 
 \end{align*}
 (for the inequality observe that $(q^{c/2}+\varepsilon\cdot 1)/(q^{m}+\varepsilon\cdot 1)\leq (q^{c/2}+1)/(q^{m}-1)\leq 2q^{c/2-m}$).
 When $\ell$ is even we have very similar calculations: the only difference is that 
 \[ \frac{ |\cent {\Sp_{2m}(q)} x|}{ |\cent {\mathrm{O}_{2m}^{\varepsilon}(q)} x|} = \frac{|\Sp_c(q)|}{2^{\delta_{c,0}-1} |\mathrm{O}_{c}^{\varepsilon'}(q)|},\]
 where $\varepsilon'$ can be either of $+$ or $-$. For $c>0$, the right hand side of~\eqref{1eq} then becomes
 \begin{align*} 
 \frac{2 q^{-m}}{(q^{m}+ \varepsilon\cdot 1)} \cdot \frac{(q^{c/2}+ \varepsilon'\cdot 1) }{ q^{-c/2}} \le \frac{2 q^{-m}}{(q^{m}-1)} \cdot \frac{(q^{c/2}+ 1) }{ q^{-c/2}} \leq 4 q^{-2m+c} 
 \end{align*} 
 as before.

For $c=0$,~\eqref{1eq} gives $\fpr_{\Omega^\varepsilon}(x)\leq 2|\O_{2m}^\varepsilon(q)|/|\Sp_{2m}(q)|=4q^{-m}/(q^m+\varepsilon\cdot 1)$, and the lemma follows.
\end{proof}

\begin{proposition}\label{propcasevi}For $\varepsilon\in \{+,-\}$, if $g\in G$, then $g$ has a regular orbit on $\Omega^\varepsilon$.
\end{proposition}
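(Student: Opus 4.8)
The plan is to follow exactly the same strategy as in Propositions~\ref{totiso}, \ref{caseiiiPSLPSU}, \ref{NS1}, and \ref{prop}: split the prime divisors $r$ of $|g|$ according to whether $r\mid ep(q^2-1)$ or not, bound $\SGO = \SGOp + \SGOpp$, and show $\SGO < 1$ so that Lemmas~\ref{basic} and~\ref{lemma:apeman} give a regular cycle. By Lemma~\ref{basic} we may assume $|g|$ is square-free, and we have $n = 2m \geq 6$ with $q = 2^e$.

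First I would handle $\SGOpp$. For a prime $r \nmid ep(q^2-1)$, any $x \in G$ of order $r$ lies in $G_0 = \Sp_{2m}(q)$ and is semisimple (since $r \nmid p$ and $r$ is odd); moreover $r \nmid q-1$ forces the minimal $\ell$ with $r \mid q^\ell-1$ to be at least $2$, so $c = \dim_{\mathbb{F}_q}\cent V x \leq 2m - \ell \leq 2m - 2$. Then Lemma~\ref{first} gives $\fpr_{\Omega^\varepsilon}(x) \leq 4q^{-2m+c} \leq 4q^{-\ell}$ if $c > 0$, and $\fpr_{\Omega^\varepsilon}(x) \leq 4/(q^m(q^m-1)) \leq 4q^{-2m} \leq 4q^{-\ell}$ if $c = 0$ (using $\ell \leq 2m$). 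Counting primitive prime divisors, $\omega_q(q^\ell-1) \leq \log_2(q^\ell-1) \leq \ell\log_2 q$, so by Lemma~\ref{series},
\[
\SGOpp \leq \sum_{\ell=2}^\infty \frac{4\,\omega_q(q^\ell-1)}{q^\ell} \leq 4\log_2 q\left(\frac{q}{(q-1)^2} - \frac{1}{q}\right).
\]

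Next I would bound $\SGOp$. I need a uniform bound $\fpr_{\Omega^\varepsilon}(x) \leq f(n,q)$ valid for all $x \in G \setminus \{1\}$; this should come from Lemma~\ref{first} applied with $c$ as large as allowed for a nontrivial element (so roughly $f(n,q) = 4q^{-2m + (2m-2)} = 4/q^2$, possibly adjusted by the extra factor from field/graph automorphisms in $G \setminus G_0$, or else invoked from \cite[Proposition~3.16]{GK} or Theorem~\ref{LSthm} as a fallback). Then $\SGOp \leq \omega(ep(q^2-1))\,f(n,q)$, and since $ep \leq q$ we have $\omega(ep(q^2-1)) \leq \log_2(q^3)$. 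Combining, $\SGO < 1$ for all but finitely many $q$ (I expect $q \leq 4$ or so to survive). For $q = 2$ the key observation — exactly as in Propositions~\ref{caseiiiPSLPSU} and~\ref{NS1} — is that when $q=2$, for $n \geq 10$ we have $G \leq I(V)$ by \cite[Tables~2.1C and~D]{KL}, and for $n=8$ the triality automorphism does not preserve this action, so again $G \leq I(V)$; hence we may appeal to \cite[Theorem~1.2]{EZ}. The remaining small cases ($q=2$ with $n=6$, and any leftover pairs with $q=4$) are eliminated by direct computation in \texttt{magma}, using the refinement that primes not dividing $|G_\alpha|$ contribute $0$ to $\SGO$ via Lemma~\ref{obvious3}, or by computing $\max\{\omega(|g|) : g \in \Aut(G_0)\}$.

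The main obstacle I anticipate is getting the uniform fixed-point-ratio bound $f(n,q)$ sharp enough: Lemma~\ref{first} is phrased for semisimple elements of $\Sp_{2m}(q)$, but elements of $G \setminus G_0$ (field automorphisms, when $e > 1$) and unipotent elements are not directly covered, so I would either extend the argument of Lemma~\ref{first} to those cases or — more likely — cite Burness's bound from \cite[Proposition~3.16]{GK} or the Liebeck--Saxl bound $4/(3q)$ from Theorem~\ref{LSthm} for the $\SGOp$ estimate, reserving the sharper Lemma~\ref{first} only for the semisimple primes $r \nmid ep(q^2-1)$ where it is genuinely needed. A secondary nuisance is bookkeeping the two $G$-sets $\Omega^+$ and $\Omega^-$ simultaneously, but since all the bounds above hold for both $\varepsilon$, this costs nothing.
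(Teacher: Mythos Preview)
Your approach is essentially the paper's: split primes, bound $\SGOp$ via the Liebeck--Saxl $4/(3q)$ bound and $\SGOpp$ via Lemma~\ref{first} combined with the geometric-series trick, then mop up small $q$. Two corrections are in order.

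First, your treatment of $q=2$ is transplanted from the orthogonal case and does not make sense here. The group $G_0=\Sp_{2m}(2)$ is symplectic, not orthogonal: there is no triality at $n=8$, and the tables~\cite[2.1C,~D]{KL} you cite concern orthogonal groups. The correct (and simpler) observation is that for $m\geq 3$ one has $\Aut(\Sp_{2m}(2))=\Sp_{2m}(2)$, so $G=G_0$ is simple and \cite[Theorem~1.2]{EZ} applies directly for every $n\geq 6$; there is no leftover case $n=6$.

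Second, a minor point of calibration: the natural split here is over $ep(q_0-1)=2e(q-1)$ (since $q_0=q$ for symplectic groups), not $ep(q^2-1)$; this is what the paper does, and it leaves only $q=4,8$ to treat after the generic bound. For $q=8$ the exact value $\omega(2e(q-1))=3$ already suffices. For $q=4$ the paper avoids \texttt{magma} entirely: it sharpens $\SGOp$ by noting that an element of order $3$ has $\dim\cent V x\leq 2m-2$, so Lemma~\ref{first} gives $\fpr(x)\leq 4/q^2$ for the prime $3$ (rather than $4/(3q)$), and it sharpens $\SGOpp$ by plugging in the exact values $\omega_q(q^\ell-1)=1$ for $\ell=2,3,4$. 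Your proposed fallback to \texttt{magma} would also work, but is unnecessary.
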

\begin{proof}
 If $q=2$, then $G=G_0$ is simple and hence the proof follows from~\cite{EZ}. So we assume that $q\geq 4$. The usual computations yield $S_1(g,\Omega^\varepsilon) \le  \omega(2e(q-1)) \frac{4}{3q}$. Let $x \in G$ of prime order $r$ and suppose that $r$ is coprime to $2e(q-1)$, so that, in particular, we have $x\in G_0$.

Let $\ell$ be the minimum positive integer with $r\mid (q^\ell-1)$. In particular $\ell\geq 2$. Since $r$ is odd, we see that $x$ is a semisimple, and arguing as usual we see that $\cent V x$ has dimension at most $2m-\ell$. It follows from Lemma~\ref{first} that
$\mathrm{fpr}_{\Omega^\varepsilon}(x)\leq 4q^{-\ell}$ when $\ell<2m$, and $\mathrm{fpr}_{\Omega^\varepsilon}(x)\leq 4/(q^m(q^{m}-1))=4q^{-2m}+4(q^{2m}(q^m-1))^{-1}$ when $\ell=2m$.
The usual computations yield  
\begin{eqnarray}\label{eq28}\nonumber
 S_2(g,\Omega^\varepsilon) &\le&  4\sum_{\ell=2}^{2m}\frac{\omega_q(q^\ell-1)}{q^\ell}+\frac{4\omega_q(q^{2m}-1)}{q^{2m}(q^m-1)}\\
&\leq& 4\sum_{\ell=2}^\infty\frac{\omega_q(q^\ell-1)}{q^{\ell}} +\frac{4\omega_q(q^{2m}-1)}{q^{2m}(q^m-1)}\le 4\log_2(q)\left(\frac{q}{(q-1)^2}-\frac{1}{q}\right)+\frac{4\omega_q(q^{2m}-1)}{q^{2m}(q^m-1)}.
 \end{eqnarray}
Since $\omega(2e(q-1))\leq \log_2(q^2)=2e$ we see that $S(g,\Omega^\varepsilon) < 1$ for $e \ge 4$. If $e=3$, then  $q=8$, and $\omega(2e(q-1))=3$, which is enough to show that 
$S(g,\Omega^\varepsilon)<1$. Finally, suppose that $e=2$. Now, the only primes dividing $e p (q-1)$ are $2$ and $3$.  If  $x\in G$ has order $3$, then $\dim_{\mathbb{F}_q}\cent V x\ge 2m-2$ and  hence
$\fpr_{\Omega^\varepsilon}(x)\leq 4/q^2$ by Lemma~\ref{first}; thus $S_1(g,\Omega^\varepsilon) \le 4/(3q) + 4/q^{2}=7/12$. Also $\omega_q(q^2-1)=\omega_q(q^3-1)=\omega_q(q^4-1)=1$ so for the  first three terms of the summation in~\eqref{eq28}, we can take $4/q^{2}, 4/q^{3}$  and $4/q^{4}$. This shows that $\SGO<1$ since   we have 
\[ \SGOpp \le  \frac{4}{q^{2}} + \frac{4}{q^{3}} +\frac{4}{q^{4}} + 8\left(\frac{q}{(q-1)^2}-\frac{1}{q}-\frac{2}{q^2}-\frac{3}{q^3}-\frac{4}{q^4}\right)+\frac{4\omega_q(q^{2m}-1)}{q^{2m}(q^m-1)}< \frac{5}{12}.\]
(For the last inequality observe that $\omega_q(q^{2m}-1)\leq \log_2(q^{2m})=4m$ and that $4m/(q^{2m}(q^m-1))$ achieves its maximum at $m=3$.)
%
\end{proof}

Our proof of Proposition~\ref{propcasevi} is in line with the main techniques used in this paper, and is very much different from the arguments used in~\cite{SZ1} to deal with the case $q=2$. With some effort, an entirely different proof (in the same spirit as~\cite{SZ1}) can be obtained by following the arguments in~\cite[Section~$3$]{SZ1} and replacing~\cite[Proposition~$3.1$]{SZ1} with~\cite[Theorem~$1$]{GuPrSp}.

\section{{\bf Case}~$(v)$ of Proposition~\ref{reduction}: $G_0=\PSL_n(q)$, $G$ contains a graph automorphism and a point stabilizer is of type $P_{k,n-k}$ or $\GL_k(q) \oplus \GL_{n-k}(q)$}\label{casev}
In this section, we let $G$ be a primitive group on $\Omega$ with socle $G_0=\PSL_n(q)$ and containing a graph automorphism. For the time being, we are interested in only two subspace actions of $G$. We suppose that, for some $k$ with $1\leq k<n/2$, we have 
\[\Omega=\Omega_{k,\leq}\quad\textrm{where}\quad\Omega_{k,\leq}:=\{\{W,U\}\mid W,U\leq V,\,\dim W=k,\,\dim U=n-k,\,W\leq U\}\]
or
\[
\Omega=\Omega_{k,\perp}\quad\textrm{where}\quad\Omega_{k,\perp}:=\{\{W,U\}\mid W,U\leq V,\,\dim W=k,\,\dim U=n-k,\,V=W\oplus U\}.\\
\]
 In both cases, $\PGammaL_n(q)$ acts transitively on $\Omega$ and, using the notation in~\cite{KL}, the stabilizer of a point of $\Omega$ is a subspace subgroup of type $P_{k,n-k}$ (when $\Omega=\Omega_{k,\leq}$) or $\GL_k(q)\oplus\GL_{n-k}(q)$ (when $\Omega=\Omega_{k,\perp}$).

We endow $V$ with a non-degenerate bilinear form and we denote by $\tau$ the involutory automorphism of the projective geometry mapping $W$ to $W^\perp$, for $W\leq V$. Observe that $\tau$ normalizes $\PGammaL_n(q)$ and acts on $G_0$ as a graph automorphism. 

\begin{proposition}\label{propPSLgraph}
If $g\in G$, then $g$ has a regular cycle.
\end{proposition}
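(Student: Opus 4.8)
The plan is to run the standard machine of the paper. By Lemma~\ref{basic} we may assume $|g|$ is square-free, and by Lemma~\ref{lemma:apeman} it suffices to prove $\SGO=\SGOp+\SGOpp<1$; by Theorem~\ref{smalldimension} we may assume $n\geq 5$. Observe first that $2\mid ep(q-1)$ always (either $q$ is odd and $2\mid q-1$, or $p=2$), so every prime $r$ with $r\mid|g|$ and $r\nmid ep(q-1)$ is odd and coprime to the orders of the diagonal, field and graph parts of $\Out(\PSL_n(q))$; hence $x:=g^{|g|/r}$ lies in $G_0=\PSL_n(q)$ and, having order coprime to $p$, is semisimple. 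As in the set-up preceding Proposition~\ref{fprell}, fix a semisimple preimage $x'\in\GL_n(q)$, let $\ell$ be the multiplicative order of $q$ modulo $r$ (so $\ell\geq 2$, since $r\nmid q-1$), and put $\ell'=\dim_{\mathbb{F}_q}[V,x']\geq\ell$.

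For the contribution $\SGOp$ of the primes $r\mid ep(q-1)$, I would simply invoke Theorem~\ref{LSthm}: after checking \cite[Table~$1$]{LS} to confirm that the subspace actions with point stabiliser of type $P_{k,n-k}$ or $\GL_k(q)\oplus\GL_{n-k}(q)$ and $n\geq 5$ either do not occur there, or occur only with an amended fixed-point-ratio formula that we can absorb, we get $\fpr_\Omega(g^{|g|/r})\leq 4/(3q)$, so that, using $ep(q-1)<q^2$, $\SGOp\leq\omega(ep(q-1))\cdot\tfrac{4}{3q}\leq 2\log_2(q)\cdot\tfrac{4}{3q}$.

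The real content is a fixed-point-ratio estimate for the semisimple elements $x$ above, analogous to Proposition~\ref{fprell} but for the sets $\Omega_{k,\leq}$ and $\Omega_{k,\perp}$. Let $\Gamma$ denote the set of $k$-dimensional subspaces of $V$. The key point, exactly as in the proof of Proposition~\ref{fprell}, is that any $x$-invariant subspace $W$ splits as $W=(W\cap\cent{V}{x'})\oplus(W\cap[V,x'])$; since every non-trivial irreducible $\mathbb{F}_q\langle x'\rangle$-constituent of $V$ has dimension $\ell\geq2$, the number of $x$-invariant members of $\Gamma$ is at most $\sum_{j\geq0}\binom{n-\ell'}{k-j\ell}_q\binom{\ell'/\ell}{j}_{q^\ell}$, which, compared with $|\Gamma|=\binom{n}{k}_q$, yields $\fpr_\Gamma(x)\leq C q^{-\ell'}$ for an absolute constant $C$. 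A pair $\{W,U\}$ of $\Omega_{k,\leq}\cup\Omega_{k,\perp}$ fixed by $x$ has both $W$ and $U$ individually $x$-invariant (as $k\neq n-k$), and the incidence, respectively complementation, constraint relating $W$ and $U$ only cuts the count down further; writing $|\Omega|$ in terms of Gaussian binomials and comparing then gives $\fpr_\Omega(x)\leq Cq^{-\ell'}\leq Cq^{-\ell}$, uniformly in $k$. Summing over $r$, grouping by $\ell\geq2$, and using $\omega_q(q^\ell-1)\leq\ell\log_2(q)$ together with Lemma~\ref{series}, this yields $\SGOpp\leq C\log_2(q)\bigl(\tfrac{q}{(q-1)^2}-\tfrac1q\bigr)$.

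Adding the two bounds gives $\SGO<1$ for all $q$ above an explicit constant. For the finitely many remaining $q$, I would first, exactly as in the earlier sections, replace $\omega(ep(q-1))$ and each $\omega_q(q^\ell-1)$ by its exact value (and discard those primes not dividing the order of a point stabiliser, via Lemma~\ref{obvious3}); whatever then survives is a short explicit list of groups $G$, for which one constructs the primitive permutation representations on $\Omega_{k,\leq}$ and $\Omega_{k,\perp}$ in \texttt{magma} and checks directly that every conjugacy-class representative has a regular cycle. The main obstacle is precisely the uniform estimate $\fpr_\Omega(x)\leq Cq^{-\ell'}$: unlike the $1$-subspace case of Proposition~\ref{fprell}, an invariant $k$-space need not be spanned by eigenvectors, so one has to count invariant subspaces through the isotypic decomposition of $V$ with some care and verify that the comparison with $|\Omega|$ stays favourable for every $k$ with $1\leq k<n/2$, keeping $C$ small enough that the list of $q$ needing computer assistance remains short.
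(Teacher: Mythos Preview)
Your plan is sound in spirit but takes a much harder road than the paper, and has a genuine gap at the end.

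The paper's proof uses \emph{no} fixed-point-ratio estimates for this proposition. It splits on whether $g\in\PGammaL_n(q)$. If so, the already-established Propositions~\ref{totiso} and~\ref{caseiiiPSLPSU} furnish a $k$-subspace $U$ on which $\langle g\rangle$ acts regularly, and then any $\omega=\{U,W\}\in\Omega$ visibly lies in a regular $\langle g\rangle$-orbit. If $g\notin\PGammaL_n(q)$ (so $|g|$ is even, and we may take it square-free), write $g=xy=yx$ with $|x|=2$ and $|y|$ odd; then $x$ interchanges $k$- and $(n-k)$-subspaces, while $y\in\PGammaL_n(q)$ has a regular orbit on some $k$-subspace $W$. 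A short combinatorial argument shows that either some $\{W,U\}\in\Omega$ with $U\notin\{W^{xy^i}:i\geq 0\}$ exists (and then $\langle g\rangle$ is regular on it), or every complement/overspace of $W$ in $\Omega$ is also one for $W^y$, forcing $W=W^y$, $y=1$, and $|g|=2$. The whole point is to recycle the $k$-subspace result rather than re-estimate anything for the pair actions.

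Your route instead demands a new uniform bound $\fpr_\Omega(x)\leq Cq^{-\ell'}$ for the $P_{k,n-k}$ and $\GL_k(q)\oplus\GL_{n-k}(q)$ actions, valid for all $1\leq k<n/2$ and $n\geq 5$. That is doable but is real work (the invariant-subspace count through the isotypic decomposition, and the comparison with $|\Omega|$ for both incidence and complementation, each require care). The more serious problem is your endgame for small $q$. Your bounds on $\SGOp$ and $\SGOpp$ are \emph{independent of $n$ and $k$}; hence if, after replacing $\omega(ep(q-1))$ and $\omega_q(q^\ell-1)$ by their exact values, the inequality $\SGO<1$ still fails for some $q$ (and with any reasonable constant $C$ it will, e.g.\ at $q=2$), then it fails for every $n\geq 5$ and every admissible $k$ simultaneously. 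That is infinitely many groups, so no finite \texttt{magma} check can finish the argument. To rescue this you would need an $n$-dependent refinement for each pair action (analogous to Lemma~\ref{importanti} for $P_1$), which is substantially more effort than the paper's two-paragraph structural proof.
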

\begin{proof}
Suppose that $g\in G\cap \PGammaL_n(q)$. By Propositions~\ref{reduction},~\ref{totiso} and~\ref{caseiiiPSLPSU}, there exists a $k$-subspace $U$ of $V$ with $\langle g\rangle$ inducing in its action on $k$-subspaces a regular orbit on $\{U^h\mid h\in \langle g\rangle\}$. Let $\omega=\{U,W\} \in \Omega$, for some $(n-k)$-subspace $W$ of $V$. Clearly, $\langle g\rangle$ induces a regular orbit on $\omega$.

Suppose that $g\notin \PGammaL_n(q)$. In particular, $g$ has even order. From Lemma~\ref{basic}, we may assume that $g$ has square-free order. Write $g=xy=yx$, with $|x|=2$ and $|y|$ odd, that is,  $y \in \PGammaL_n(q)$. As $g\notin\PGammaL_n(q)$, we see that $x=\tau z$, for some $z\in \PGammaL_n(q)$. In particular, $x$ maps $k$-subspaces to $(n-k)$-subspaces. Let $W$ be a $k$-subspace with $y$ inducing a regular orbit on $\{W^h\mid h\in \langle y\rangle\}$. Note that since $y$ has odd order, we have $\{y^{1+2t}\mid t\geq 0\}=\langle y\rangle$.

Assume that there exists $U\notin \{W^{xy^{i}}\mid i\geq 0\}$ with $\omega=\{W,U\}\in \Omega$. If $\omega^{g^i}=\omega$ with $1\leq i<|g|$, then $i$ is odd because $g^2=y^2$ acts regularly on $\{W^{h}\mid h\in\langle y\rangle\}$. Therefore, $i=1+2t$, for some $t\geq 0$. Now, $\omega^{g^i}=\{W^{xy^{1+2t}},U^{xy^{1+2t}}\}=\omega =\{W,U\}$ and hence $U=W^{xy^{1+2t}}$, a contradiction. This yields that $\langle g\rangle$ induces a regular orbit on $\omega$. In particular, we may assume that every subspace $U$ of $V$ with $\{W,U\}\in \Omega$ is of the form $W^{xy^{i}}$, for some $i\geq 0$. 
This yields that every subspace $U$ of $V$ with $\{W^y,U\}\in\Omega$ is of the form $(\{W^{xy^i}\mid i\geq 0\})^y=\{W^{xy^{i}}\mid i\geq 0\}$. When $\Omega=\Omega_{k,\perp}$, it follows that $V=W\oplus U$ if and only if $V=W^y\oplus U$, and when $\Omega=\Omega_{k,\leq}$, it follows that $W\leq U$ and $\dim U=n-k$ if and only if $W^y\leq U$ and $\dim U=n-k$. In both cases, elementary geometric considerations imply that $W=W^y$. Since $y$ induces a regular orbit on $\{W^h\mid h\in \langle y\rangle\}$, we get $y=1$ and $g=x$ has order $2$. Now, clearly $g$ has a regular cycle.
\end{proof}

\section{{\bf Case}~$(vii)$ of Proposition~\ref{reduction}: Novelties}\label{bloodynovelties}

Here we may assume that $G_{\alpha} \cap G_0$ is nonmaximal in $G_0$ and since $G$ is a subspace subgroup, the only maximal subgroups of $G_0$ containing $G_{\alpha} \cap G_0$ are either reducible subgroups (as in Definition~\ref{def}(a)) or of type $O_{2m}^{\pm}(2^f)$ when $G_0 = \Sp_{2m}(2^f)'$ (as in Definition~\ref{def}(b)). The case of $G_0 = \POmega_8^{+}(q)$ is special and the novelties have been obtained explicitly by Kleidman~\cite{D4max}. We postpone this case until the end of this section and assume for now that $G_0 \ne \POmega_8^{+}(q)$. Moreover, by Theorem~\ref{smalldimension}, we may assume that $n \ge 5$ and in particular, that $G_0 \ne \PSp_4(q)$.

 We use the notation of~\cite{KL} quite liberally in this section. In particular, we let $H = G_\alpha$ be a maximal subgroup of $G$. By Aschbacher's theorem, $H$ (which is sometimes denoted by $H_G$ in~\cite{KL}) belongs to one of the nine collections $\C_i(G)$ of subgroups of $G$ (where $i \in \{1,\ldots, 9\}$). We define $\Omega$, $\Gamma$ and $A$ depending on $G_0$ according to Table~\ref{OmegaTable}. The reader will notice an abuse of notation here, since we have hitherto used $\Omega$ to denote a primitive $G$-set. For the remainder of this section, we will denote this primitive $G$-set by $\OmSet$. 
 
\begin{table}
\begin{tabular}{|c|c|c|c|} 
\hline
$G_0$ & $\Omega$ & $\Gamma$ & $A$ \\
\hline
 $\PSL_n(q)$ & $\mathrm{SL}_n(q)$ & $\Gamma \mathrm{L}_n(q)$ & $\Gamma \langle \iota \rangle$ \\ 
 $\PSU_n(q)$& $\mathrm{SU}_n(q)$ & $\Gamma \mathrm{U}_n(q)$ & $\Gamma$ \\ 
 $\PSp_n(q)$ & $\mathrm{Sp}_n(q)$ & $\Gamma \mathrm{Sp}_n(q)$ & $\Gamma$ \\ 
 $\POmega_n^{\varepsilon}(q)$& $\Omega^{\varepsilon}_n(q)$ & $\Gamma \mathrm{O}^{\varepsilon}_n(q)$ & $\Gamma$ \\
 \hline
 \end{tabular} \caption{Notation for Section~\ref{bloodynovelties} (from \cite{KL})}
 \label{OmegaTable}
 \end{table}

 For a group $X$ such that $\Omega \le X \le A$ we define $\overline{X}$ to be the quotient group $\overline{X} = X / (X \cap Z)$, where $Z = \Zent {\GL_n(q_0)}$ (and where $q_0$ is as in Notation~\ref{generalnotation}). In particular $\overline{\Omega}$ is the simple group $G_0$ ($G_0 \ne \Sp_4(2)'$ since $n \ge 5$) and we have $G = \overline{X}$ for some $X$ as above. The subgroups $H \in \C_9(G)$ are almost simple, and each has a socle whose covering group $L$ acts absolutely irreducibly on the natural module $V$ and satisfies various other conditions to prevent $H$ from being naturally contained in a subgroup in $\displaystyle \cup_{i=1}^{8} \mathcal{C}_i(G)$. For these groups we define the corresponding subgroup of $\overline{\Omega}$ to be $H_{\overline{\Omega}} = H \cap \overline{\Omega}$.

 For $i=1,\ldots, 8$, we define the subgroup $H_{\Gamma} \in \C_i(\Gamma)$ to be the stabilizer in $\Gamma$ of some geometrical structure on $V$. For example, when $i=2$, we have $V= V_1 \oplus \cdots \oplus V_k$ where $k = \dim V_j$ for all $j$ and $H_{\Gamma} = \norm{\Gamma} { \{ V_1, \ldots ,V_k \}}$. The corresponding subgroup of $\Omega$, denoted $H_{\Omega}$, is then defined to be $H_{\Gamma} \cap \Omega$. Similarly, for $X$ as above, we define the corresponding subgroup $H_X$ of $X$ to be 
 \[ H_X = \begin{cases}
 H_{\Gamma} \cap X & \text{ if } X \le \Gamma; \\
 \norm {X}{H_{\Gamma}} & \text{ otherwise.}
 \end{cases}\]
 Further, we define $H_{\overline{X}}$ to be $H_X / (H_{X} \cap Z)$. In particular $H_{\overline{\Omega}} = H_{\overline{\Gamma}} \cap \overline{\Omega}$. Moreover, in almost all situations, we have $H_{\overline{\Omega}} = H_G \cap \overline{\Omega}$. (Indeed, this is clear from the definitions above if $G \le \overline{\Gamma}$ since $H_{\overline{\Omega}} = H_{\overline{\Gamma}} \cap \overline{\Omega} $ and $H_G = H_{\overline{\Gamma}} \cap G$. Otherwise $G_0$ is linear, $G \not \le \overline{\Gamma}$ and we have $H_G \cap \overline{\Omega} = \norm G{H_{\overline{\Gamma}}} \cap \overline{\Omega} = \norm {\overline{\Omega}} {H_{\overline{\Gamma}}}.$ As $H_{\overline{\Omega}} = H_{\overline{\Gamma}} \cap \overline{\Omega}$, we have $H_{\overline{\Omega}} \le H_G \cap \overline{\Omega}$; however it may occur that equality does not hold when (and only when) $H_G$ is a type $\GL_1(q) \wr S_n$ group. Moreover this only happens when $q=2$ and $n$ is even or when $(n,q) = (2,5), (4,3)$, or $(3,4)$ (see~\cite[ Proposition~3.1.3]{KL}). For example, when $q=2$, $n$ is even and $G= \GL_n(2) \langle \iota \rangle$, we have $H_{\overline{\Omega}} \cong \GL_1(2) \wr S_n \cong S_n$ but $H_G \cap \overline{\Omega} = \norm {\overline{\Omega}} {H_{\overline{\Gamma}}} = 2 \times S_n$.)

 Usually, $H_{\overline{\Omega}}$ is maximal in $G_0$, in which case $H$ is not a novelty and hence $H$ does not concern us. Thus we may assume that $H_{\overline{\Omega}}$ is strictly contained in some subgroup $K_{\overline{\Omega}} \in \C_i(\overline{\Omega})$, for some $i \in \{1, \ldots, 9\}$. 
 It is very important to observe that, since $H=H_G$ is a subspace subgroup, we have $$K_{\overline{\Omega}} \in \C_1(\overline{\Omega}),\,\,
 \textrm{ or  }\, G_0 = \Sp_{n}(2^f) \textrm{ and } K_{\overline{\Omega}} \in \C_8(\overline{\Omega}).$$ 

Complete information on the novelties for $n \ge 13$ is contained in~\cite[Table~3.5.H]{KL}. Reading off the possibilities for $H_{\overline{\Omega}}$ and $K_{\overline{\Omega}}$ in~\cite[Table~3.5H]{KL} we have
\begin{itemize}
\item (row 6) $H_{\overline{\Omega}}$ of type $P_{k,n-k}$, $G_0 = \PSL_n(q)$ and $G$ contains graph automorphisms. This case has been eliminated already in Section~\ref{casev}.
\item (row 7) $H_{\overline{\Omega}}$ of type $\GL_k(q)\oplus \GL_{n-k}(q)$,
 $G_0 = \PSL_n(q)$ and $G$ contains graph automorphisms. This case also has been eliminated already in Section~\ref{casev}.
 \item (row 12) $H_{\overline{\Omega}}$ of type $P_{n/2-1}$,  $G_0$ orthogonal. This case is eliminated in Section~\ref{sec:line12+14}.
\item (row 14) $H_{\overline{\Omega}}$ of type $O_2^+(3) \perp O_{n-2}(3)$, $G_0 = \POmega_{n}^{\pm}(q)$ ($n$ even) and $G$ contains similarities. This case is eliminated in Section~\ref{sec:line12+14}.
\item (row 22) $H_{\overline{\Omega}}$ of type $\GL_{n/2}(q).2$, $G_0$ orthogonal. This case is eliminated in Section~\ref{sec:line22}. 
\end{itemize}
We note that the examples in row $15$ of~\cite[Table~3.5H]{KL}, where $H_{\overline{\Omega}}$ of type $O^{+}_2(2) \wr S_{n/2}$ is contained in $K_{\overline{\Omega}}$ of type $P_{n/2}$, do not in fact yield maximal subgroups of $G$ since $\norm G{H_{\overline{\Omega}}} < \norm {G}{K_{\overline{\Omega}}}$ (see~\cite[p. 68]{KL} and row 12 of~\cite[Table~3.5H]{KL}). 

When $n \le 12$ we need to work  harder to obtain the possibilities for $G$ and $H$.
Again we recall that Aschbacher's theorem~\cite{Asch} asserts that since $H=G_\alpha$ is maximal in $G$, it must be a subgroup of type $\mathcal{C}_i(G)$ for some $i \in \{1, \ldots, 9\}$. 
 Since $G_0 \cap H$ is not maximal in $G_0$, it does not appear in the list of maximal subgroups of $G_0$ in~\cite{kthesis}. (Observe, however, that the maximal subgroups of $G_0$ for $n =12$ are not listed in~\cite{kthesis}.) Loosely speaking, for $n \le 12$, we adopt  (suitably modified) the arguments in~\cite{KL} for our purposes. Moreover these arguments will simplify the checking of the tables of maximal subgroups in~\cite{kthesis}. 

 \begin{proposition} \label{10.1}
 Suppose that $n \ge 5$,  $G_0 \ne 
 \POmega^{+}_8(q)$, $H=G_\alpha$ is a subspace subgroup as in Definition~$\ref{def}$ and that $G_0\cap H$ is not maximal in $G_0$ (so that $H$ is a $G$-novelty). Then $H$ is not a subgroup of type $\mathcal{C}_i(G)$ for any $i \in \{4,\ldots, 9\}$. 
 \end{proposition}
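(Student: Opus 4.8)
The plan is to argue by contradiction, so suppose $H=G_\alpha$ lies in $\mathcal{C}_i(G)$ for some $i\in\{4,\dots,9\}$ and is nonetheless a subspace subgroup in the sense of Definition~\ref{def}. The decisive structural fact, which I would record class by class from~\cite[Chapter~4]{KL} (and, for $i=9$, directly from the defining conditions for the class $\mathcal{S}$), is that a subgroup in $\mathcal{C}_4\cup\mathcal{C}_5\cup\mathcal{C}_6\cup\mathcal{C}_7\cup\mathcal{C}_9$ acts \emph{absolutely irreducibly} on the natural module $V$, and already its ``base group'' does: for $\mathcal{C}_4$ and $\mathcal{C}_7$ the base is a central product of classical groups acting on a tensor product of their natural modules; for $\mathcal{C}_5$ it is a classical group over a subfield acting on the base change of its natural module; for $\mathcal{C}_6$ it is the normaliser of a symplectic-type $r$-group, absolutely irreducible by construction; and for $\mathcal{S}$ the preimage of the socle of $H$ is absolutely irreducible by hypothesis. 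In every case this base group lies in $G_0$, so $H\cap G_0$ itself acts absolutely irreducibly on $V$. The class $\mathcal{C}_8$ I would dispose of separately: if $G_0$ is orthogonal then $\mathcal{C}_8(G)=\emptyset$; if $G_0$ is linear or unitary then $H\cap G_0$ is a classical subgroup of $\GL(V)$ (a symplectic, unitary, or orthogonal subgroup) and again acts irreducibly on $V$; and if $G_0=\Sp_{n}(q)'$ with $q$ even then $H\cap G_0=\O_{n}^{\pm}(q)$, which (since $G_0$ symplectic and $n\ge 5$ force $n=2m$ with $m\ge 3$) is \emph{maximal} in $G_0$, so that $H$ is not a $G$-novelty, contrary to assumption.

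Thus in all remaining cases $H\cap G_0$ acts irreducibly on $V$, hence lies in no stabiliser of a proper non-zero subspace of $V$; in particular no maximal overgroup of $H\cap G_0$ in $G_0$ is of the type permitted by Definition~\ref{def}(a). As $H\cap G_0$ is not maximal in $G_0$, it has some maximal overgroup $M$ in $G_0$, and for $H$ to be a subspace subgroup we are forced into Definition~\ref{def}(b): $(G_0,p)=(\Sp_{n}(q)',2)$ and $M=\O_{n}^{\pm}(q)$, again with $n=2m$ and $m\ge 3$. Now I would use absolute irreducibility of $H\cap G_0$ to see that the quadratic form $Q$ refining the ambient alternating form and preserved by $H\cap G_0$ (whose existence is exactly what $H\cap G_0\le M$ asserts) is unique up to scalars among $(H\cap G_0)$-invariant such forms: the difference of two of them would be a nonzero $(H\cap G_0)$-invariant additive map $V\to\mathbb{F}_q$ semilinear for the squaring automorphism of $\mathbb{F}_q$, but the $(H\cap G_0)$-module of such maps is a twisted dual of $V$, hence nontrivial and absolutely irreducible, and so has no nonzero invariant vector. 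Consequently $H$ permutes the resulting one-dimensional space of forms and therefore normalises $M\cap G_0$, so $H\le\norm{G}{M\cap G_0}=:N$, the stabiliser in $G$ of $Q$ up to scalars. Since $\O_{n}^{\pm}(q)$ is maximal and self-normalising in $\Sp_{n}(q)'$ for $m\ge 3$, and $\Sp_{n}(q)$ ($m\ge 3$) has no graph automorphism, $N$ is a maximal ($\mathcal{C}_8$-type) subgroup of $G$. But $H$ is maximal in $G$ and $H\le N<G$, so $H=N$, whence $H\cap G_0=\O_{n}^{\pm}(q)$ is maximal in $G_0$ --- contradicting the novelty hypothesis. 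This proves the proposition.

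The genuinely routine part is the class-by-class verification of absolute irreducibility of the base group and the bookkeeping that it sits inside $G_0$; all of this is immediate from the constructions in~\cite[Chapter~4]{KL}, and is uniform in $n$ (so it applies equally for $n\le 12$ and for $n\ge 13$). The part that demands care is the reduction into Definition~\ref{def}(b) and its disposal: one must be confident that the \emph{only} way an irreducible $H\cap G_0$ can both fail to be maximal and have all of its maximal overgroups of ``subspace'' type is via the orthogonal subgroup of an even-characteristic symplectic group, and then that this configuration \emph{collapses}, i.e.\ that $H\cap G_0$ is forced to equal $\O_{n}^{\pm}(q)$ rather than a proper subgroup of it. The hypotheses $n\ge 5$ (which, for $G_0$ symplectic, guarantees $m\ge 3$, hence both the maximality of $\O_{n}^{\pm}(q)$ in $G_0$ and the absence of a graph automorphism of $G_0$) and $G_0\ne\POmega_8^{+}(q)$ are exactly what make this work cleanly; the excluded group $\POmega_8^{+}(q)$, with its triality automorphisms, is treated separately at the end of the section.
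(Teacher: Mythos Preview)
Your argument is correct and shares its skeleton with the paper's: establish that $H\cap G_0$ acts (absolutely) irreducibly on $V$ for each of $\C_4,\C_5,\C_6,\C_7,\C_9$, thereby excluding Definition~\ref{def}(a), and treat $\C_8$ separately by noting that $\O_n^{\pm}(q)$ is already maximal in $\Sp_n(q)$ for $n\ge 6$. Where you genuinely diverge is in disposing of Definition~\ref{def}(b). The paper does this class by class, using structural facts peculiar to each: there are no $\C_4$- or $\C_7$-subgroups when $G_0=\Sp_n(2^f)$; if $G_0$ is symplectic and $H\in\C_6$ then $q$ is odd; for $\C_5$ a short direct argument (transitivity of $\Sp_n(q_0)$ on non-zero vectors of $V_\sharp$) shows no quadratic form is preserved; and for $\C_9$ the defining conditions forbid the socle from fixing a quadratic form. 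You instead give one uniform argument: from absolute irreducibility you deduce that any $(H\cap G_0)$-invariant quadratic form refining $f$ is unique (via the Frobenius-twisted dual), whence $H$ normalises its isometry group $M$, and maximality of $H$ in $G$ forces $H\cap G_0=M$, contradicting the novelty hypothesis. Your route is cleaner and conceptually pleasing, at the price of the small representation-theoretic lemma on semilinear forms; the paper's is more elementary but ad hoc. One harmless imprecision: the form $Q$ you produce is in fact unique, not merely unique up to scalars, since a non-trivial scalar multiple of $Q$ polarises to a different alternating form; your conclusion that $H$ normalises $M$ is unaffected.
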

 \begin{proof} 
 First suppose that $H \in \mathcal{C}_4(G)$ so that $H_{\Gamma}$ stabilizes a tensor decomposition $V= V_1 \otimes V_2$ as described in~\cite[Section~4.4]{KL}. We claim that $H_{\Omega}$ acts irreducibly on $V$. For $H_{\Omega}$ contains a subgroup of the form $\Omega_1 \otimes \Omega_2$ (by~\cite[(4.4.13)]{KL}), which acts absolutely irreducibly on $V$ provided $\Omega_i$ acts absolutely irreducibly on $V_i$ for $i=1$ and $2$ by~\cite[Proposition~4.4.3(vi)]{KL}. This is true as long as $\Omega_i \ne \Omega^{\pm}_2(q)$ by~\cite[Proposition~2.10.6]{KL}; but $\Omega_i \ne \Omega^{\pm}_2(q)$ in all cases by definition of $\mathcal{C}_4$~\cite[Table~4.4.A]{KL}. Thus $H_{\Omega}$ acts irreducibly on $V$ and hence $H$ cannot be a subspace subgroup as in Definition~\ref{def} (note that when $G_0 = \Sp_n(2^f)$ there are no $\mathcal{C}_4$-subgroups~\cite[Table~4.4A]{KL}). 

 Similarly suppose that $H \in \mathcal{C}_7(G)$ so that $H_{\Gamma}$ stabilizes a subspace decomposition $V= V_1 \otimes \cdots \otimes V_t$, with $\dim V_i = k$ and $n = \dim V = k^t$. Again there are no $ \mathcal{C}_7$-subgroups when $G_0 = \Sp_n(2^f)$ so it suffices to show that $H_{\Omega}$ acts irreducibly on $V$. Now $H_{\Omega}$ contains a subgroup $L$ of the form $\Omega_1' \otimes \cdots \otimes \Omega_t'$, and  $\Omega_i'$ acts absolutely irreducibly on $V_i$ by~\cite[Corollary~2.10.7]{KL} (each $\Omega_i'$ is quasisimple by definition of $\mathcal{C}_7$~\cite[p. 156]{KL}). Thus $L$ and $H_\Omega$ act absolutely irreducibly on $V$ by~\cite[Lemma~4.4.3(vi)]{KL}. 

 Next, suppose that $H$ is a subfield subgroup in $\mathcal{C}_5(G)$. Now $H_\Omega$ contains a subgroup $\Omega_{\sharp}$ by~\cite[(4.5.5)]{KL}, which acts absolutely irreducibly on the corresponding vector space $V_{\sharp}$ over the corresponding subfield by~\cite[Proposition~2.10.6]{KL}, and hence $\Omega_{\sharp}$ and $H_{\Omega}$ act irreducibly on $V = V_\sharp \otimes \mathbb{F}_{q}$. Thus we may assume that $q$ is even, $G_0$ is symplectic and $H_{\Omega}$ preserves a quadratic form $Q$. However this is absurd. Indeed, let $w \in V_\sharp \setminus \{0\}$ with $Q(w)=0$. Now $\Omega_{\sharp} = \Sp_{n}(q_0)$ is transitive on the non-zero vectors of $V_\sharp$ by~\cite[Lemma~2.10.5]{KL}, which implies that $Q(v)=0$ for all $v \in V_\sharp$. It follows that the associated bilinear form $f_Q$ vanishes on $V$, which is a contradiction since $f_Q$ is non-degenerate (by definition). Thus $H_{\Omega}$ is irreducible and not contained in a type $O_{n}^{\pm}(2^f)$ subgroup when $G_0 = \Sp_{n}(2^f).$

Now suppose that $H \in \mathcal{C}_6(G)$. Note that if $G_0$ is symplectic then $q$ is odd (by~\cite[Table~3.5.C]{KL}), so it suffices to show that $H_{\Omega}$ acts irreducibly on $V$. Certainly the $r$-group $R$ in the definition of $\mathcal{C}_6(\Gamma)$ acts (absolutely) irreducibly on $V$ by definition~\cite[p.150]{KL}. Moreover, $R$ is contained in $H_\Omega$ (see the proof of~\cite[Proposition~4.6.4]{KL}) and hence $H_{\Omega}$ also acts irreducibly on $V$.

Now let us assume that $H$ is of classical type in $\mathcal{C}_8(G)$ so that $G_0$ is either linear or symplectic (see~\cite[Table~4.8.A]{KL}). In all these cases, since $n \ge 3$, $H_{\Omega}$ acts absolutely irreducibly on $V$ by~\cite[Proposition~2.10.6]{KL}. Thus we may assume that $G_0 = \Sp_{n}(2^f)$ and that $H$ is of type $O_{n}^{\pm}(q)$. We claim that these groups do not give rise to novelties. When $n\le 10$,  $G_0 \cap H$ is maximal in $G_0$ for all $q$  by~\cite{kthesis}. When $n =12$, the argument of~\cite[Proposition~7.8.1]{KL} shows that $H_{\Omega}$ is not contained in a subgroup of type $\mathcal{C}_i(\Omega)$ for $i = 1,\ldots,8$. Furthermore, it is easy to check that $H \cap G_0$ is much too big to be contained in a $\mathcal{C}_9$-subgroup since $\dim O_{12}( \overline{\mathbb{F}}_{q}) = 66$ and so $|H|$ is greater than $\max \{q^{3n}=q^{36}, (n+2)!=14!\}$ (see~\cite[Theorem~5.2.4]{KL}). Thus $H \cap G_0$ is always maximal in $G_0$. 

Finally, suppose that $H$ is an almost simple subgroup of type $\mathcal{C}_9$. Then, by definition, $H_\Omega$ acts absolutely irreducibly on $V$~\cite[p.3]{KL}; thus we may assume that $G_0 = \Sp_{n}(2^f)$  and $H \cap G_0$ is only contained in a type $O_{n}^{\pm}(2^f)$ subgroup. In particular, the socle of $H _{\Omega}$ fixes a quadratic form, which cannot happen when $G_0 = \Sp_{n}(2^f)$ by~\cite[p.3]{KL}.
 \end{proof}
The next two lemmas prove results for $\C_2$-subgroups and $\C_3$-subgroups. 
 \begin{lemma} \label{10.2}
 Suppose that $n \ge 5$, that $G_0 \ne 
 \POmega^{+}_8(q)$, that $H=G_\alpha$ is a subspace subgroup as in Definition~$\ref{def}$ and that $G_0 \cap H$ is not maximal in $G_0$ (so that $H$ is a $G$-novelty). Then $H$ is not a $\C_2$-subgroup of type $\GL^{\pm}_k(q) \wr S_t$, $\Sp_k(q) \wr S_t$ or $O_k^{\zeta}(q) \wr S_t$. 
 \end{lemma}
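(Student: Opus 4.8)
The plan is to show that in each of the three $\C_2$ cases the relevant subgroup $H_{\overline\Omega}$ already acts irreducibly on $V$ (or, for the imprimitive linear case, is maximal in $G_0$), so that it cannot be a proper subgroup of a subspace subgroup as in Definition~\ref{def}, and hence cannot give rise to a $G$-novelty that is itself a subspace subgroup. Recall that $\C_2$-subgroups stabilise a direct sum decomposition $V=V_1\oplus\cdots\oplus V_t$ with $\dim V_i=k$ and $n=kt$, and that in each of the listed types $H_\Omega$ contains a ``base group'' built out of the classical groups on the summands $V_i$ permuted transitively by $S_t$.

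First I would dispose of the imprimitive \emph{linear} type $\GL_k(q)\wr S_t$. By Remark~\ref{remrem2} and~\cite[Table~3.5H]{KL}, for $n\geq 13$ such a subgroup is a novelty only in the rows already handled in Sections~\ref{casev} and~\ref{sec:line12+14}--\ref{sec:line22}; in fact for $G_0=\PSL_n(q)$ without a graph automorphism the stabiliser of the decomposition $\{V_1,\dots,V_t\}$ is exactly a maximal $\C_2$-subgroup of $G_0$ (see~\cite[Proposition~4.2.9 and Table~3.5.A]{KL}), with the only failures of maximality being the small exceptions $q=2$ with $n$ even and $(n,q)=(2,5),(4,3),(3,4)$ recorded on~\cite[p.~68]{KL}; all of these have $n\leq 4$ and are excluded by our standing hypothesis $n\geq 5$ (together with Theorem~\ref{smalldimension}). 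So there is nothing to prove for the linear case except to cite these facts.

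The main work is the symplectic and orthogonal types $\Sp_k(q)\wr S_t$ and $O_k^\zeta(q)\wr S_t$. Here the strategy mirrors the proof of Proposition~\ref{10.1}: I would show that $H_\Omega$ acts irreducibly (indeed, for our purposes it suffices to show it fixes no proper non-zero totally singular, non-degenerate, or non-singular $1$-space and, when $G_0=\Sp_n(2^f)$, no quadratic form). The base group of $H_\Omega$ is of the form $(\Omega_k^1(q)\times\cdots\times\Omega_k^t(q)).[\,\cdot\,]$ extended by the transitive permutation action of $S_t$ on the summands. Since $S_t$ permutes the $V_i$ transitively, any $H_\Omega$-invariant subspace must intersect all the $V_i$ ``symmetrically'', and since each factor $\Omega_k^i(q)$ acts irreducibly on its summand $V_i$ (the relevant classical groups of dimension $k\geq 2$ are irreducible on the natural module, e.g.\ by~\cite[Proposition~2.10.6]{KL}, and the cases with $k=1$ or $k=2$ of ``bad'' type $\Omega_2^\pm$ are excluded by the definition of $\C_2$ in~\cite[Table~4.2.A]{KL}), a standard Clifford-theory / direct-sum argument shows that the only $H_\Omega$-invariant subspaces are the $S_t$-invariant sums of summands, none of which is totally singular or non-degenerate of the restricted type, nor a non-singular $1$-space when $p=2$. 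For the symplectic-in-even-characteristic subcase one argues as in Proposition~\ref{10.1}: if $H_\Omega$ preserved a quadratic form $Q$ polarising to the symplectic form, then transitivity of each $\Sp_k(q)$ on the non-zero vectors of $V_i$ (cf.~\cite[Lemma~2.10.5]{KL}) would force $Q$ to vanish on each $V_i$, hence on $V$, contradicting non-degeneracy of the bilinear form. Either way $H_\Omega$ is not contained in a reducible subspace subgroup nor in a type $O_n^\pm(2^f)$ subgroup of $\Sp_n(2^f)$, so $H$ is not a subspace subgroup, contradicting our hypothesis.

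I expect the main obstacle to be the small-dimensional bookkeeping: one has to check carefully that no degenerate configuration of the parameters $(k,t,q)$ allowed by the definition of $\C_2$ in~\cite[Table~4.2.A]{KL} produces a reducible base group — in particular ruling out $k=1$ (so $t=n$, the ``$\GL_1\wr S_n$'' or ``$O_1\wr S_n$'' situations, which when they are non-maximal are exactly the small exceptions with $n\leq 4$ or the $p=2$ non-singular-$1$-space case already covered by the reduction in Proposition~\ref{reduction}) and $k=2$ of minus-type (excluded by the $\C_2$ definition). Once these boundary cases are pinned down, the irreducibility argument is routine. For $n\leq 12$ one supplements this with a direct inspection of the maximal-subgroup tables in~\cite{kthesis} (as in the other $n\leq 12$ arguments of this section), checking that the only $\C_2$-subgroups of the three listed types that fail to be maximal in $G_0$ are the ones already accounted for.
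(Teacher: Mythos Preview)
Your overall strategy --- show irreducibility of $H_\Omega$ on $V$ via the base group acting irreducibly on each summand and $S_t$ permuting them transitively, then rule out the quadratic-form case for $\Sp_k(2^f)\wr S_t$ --- is exactly the paper's. But you mishandle precisely the boundary cases where the irreducibility argument breaks down, and those are the substance of the proof.

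First, your claim that the exception ``$q=2$ with $n$ even'' for $\GL_1(2)\wr S_n$ has $n\leq 4$ is simply false: this case arises for every even $n$. The paper disposes of it by exhibiting an explicit non-degenerate symmetric bilinear form $f(v_i,v_j)=1+\delta_{ij}$ preserved by $H_\Omega$, so that $H_\Omega\leq\Sp_n(2)$; this is a $\C_8$-subgroup, hence \emph{not} a subspace subgroup in the sense of Definition~\ref{def}. (For $n$ odd, $H$ sits inside a type $\GL_1(2)\oplus\GL_{n-1}(2)$ subgroup and is not maximal in $G$.)

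Second, the types $O_2^+(q)\wr S_{n/2}$ are \emph{not} excluded by the definition of $\C_2$ in~\cite[Table~4.2.A]{KL}; they are genuine $\C_2$-subgroups. For $q\in\{2,3\}$ the group $\Omega_2^+(q)$ is trivial or of order $2$, so the hypotheses of~\cite[Proposition~2.10.13]{KL} fail (the summands are isomorphic as $(H_\Omega)_{(\mathcal{D})}$-modules, or not irreducible). The paper handles $O_2^+(2)\wr S_{n/2}$ by noting it is never maximal in $G$ (it lies inside $\norm G{P_{n/2-1}}$), and handles $O_2^+(3)\wr S_{n/2}$ by constructing an explicit orthogonal decomposition $V=W_1\perp W_2$ with $\dim W_i=n/2$ preserved by $H_\Omega$, placing $H_\Omega$ inside another $\C_2$-subgroup of type $O_{n/2}(q)^2$ or $O_{n/2}^\pm(q)\wr S_2$ --- again not a subspace subgroup.

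These three ad hoc arguments are the real content of the lemma; your proposal waves them away with incorrect assertions about the $\C_2$ definition and the range of $n$. Your linear-case shortcut (``just cite maximality in $G_0$'') therefore fails for $\GL_1(2)\wr S_n$ with $n$ even, and your irreducibility argument for the orthogonal case fails for $O_2^+(2)$ and $O_2^+(3)$. You also do not explicitly treat the unitary type $\mathrm{GU}_k(q)\wr S_t$, though the paper's uniform irreducibility argument via~\cite[Proposition~2.10.13]{KL} covers it without further exceptions.
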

 \begin{proof} 
We argue by contradiction and we assume that  $H$ is a $\C_2$-subgroup of type $\GL^{\pm}_k(q) \wr S_t$, $\Sp_k(q) \wr S_t$ or $O_k^{\zeta}(q) \wr S_t$. Here $H_{\Gamma}$ is the stabilizer $\norm {\Gamma}{\{V_1, \ldots, V_t\}}$ where $V = V_1 \oplus \cdots \oplus V_t$ is a decomposition $\mathcal{D}$ of $V$ into subspaces of equal dimension $k$. Observe that we are assuming that the $V_i$s are non-degenerate when $G_0$ is not linear. 

Observe that when $q$ is even and $G_0$ is symplectic we have $q\geq 4$, since otherwise $\mathrm{Out}(G_0)=1$ and there are no novelties.

 We claim that subgroups $H$ of type $\Sp_k(2^f) \wr S_{t}$ do not fix a quadratic form $Q$ (when $q \ge 4$).  In fact, if this is not the case, then $H_\Omega\leq K$ with $K=O_n^{\pm}(q)$. Now, note that $\Sp_k(2^f)^{t}$ is contained in $H_\Omega$. As $\Sp_k(2^f)^{t}$ acts irreducibly on $V_1$, we see that $V_1$ is either totally singular or non-degenerate with respect to $Q$. Suppose that $V_1$ is totally singular. Let $g \in \Sp_k(2^{f})$ with $g \ne 1$ and set $g_1=(g,1, \ldots,1) \in \Sp_k(2^{f})^t$. Clearly, $g_1$ acts nontrivially on $V_1$ and trivially on $V_i$ for $i \ge 2$. Therefore $g_1$ acts nontrivially on $V_1$ and trivially on $V/V_1^{\perp_Q}$. However,  this contradicts~\cite[Lemma~4.1.9]{KL}: if $g_1$ acts nontrivially on $V_1$, then it  acts nontrivially also on $V/ V_1^{\perp_Q}$. This shows that  $V_1$ is non-degenerate with respect to $Q$ and hence the natural image $K_{V_1}$ of $K$ in $\GL(V_1)$ is of type $O_k^{\pm}(2^{f})$. Now observe that $K_{V_1}$ contains the natural image $H_{V_1}$ of $H_{\Omega}$ in $\GL(V_1)$ and that $H_{V_1}$ is of type $\Sp_k(2^f)$. This can only be true if $k=q=2$, which contradicts our assumption that $q \ge 4$. Thus $\Sp_k(2^f) \wr S_{t}$ does not fix a quadratic form (when $q\ge 4$).

Now the result follows by showing that $H_{\Omega}$ acts irreducibly on $V$. 
 By~\cite[Proposition~2.10.13]{KL},  $H_{\Omega}$ acts irreducibly on $V$ if the following three conditions hold: 
\begin{itemize}
\item[(i)] $(H_{\Omega})_{(\mathcal{D})}$, the subgroup of $H_{\Omega}$ fixing setwise each $V_i$, acts irreducibly on each $V_i$; 
\item[(ii)] the $V_i$ are pairwise non-isomorphic $(H_{\Omega})_{(\mathcal{D})}$-modules; and 
\item[(iii)] $H_\Omega$ acts transitively on $\{V_1, \ldots V_t\}$.
\end{itemize}
 To prove (i), we may assume that $\dim V_i \ge 2$, otherwise the result is trivial. By~\cite[Proposition~2.10.6]{KL},  (i) holds  unless $\Omega_i = \Omega(V_i)$ is of type $\Omega_2^{\pm}(q)$, since $(H_{\Omega})_{(\mathcal{D})}$ contains $\Omega_i$.
 In this exceptional case, $G_0$ is orthogonal and $n$ is even so we may assume that $ n \ge 8$ and $t \ge 4$. Now by~\cite[(6.2.3)]{KL}, for each $g' \in O_m^{\zeta}(V_i)$, there exists $g \in (H_{\Omega})_{(\mathcal{D})}$ that acts on $V_i$ in the same way as $g'$. In particular, $(H_{\Omega})_{(\mathcal{D})}$ acts absolutely irreducibly on $V_i$ in this case as well, unless $q=2,3$ and $\Omega_i(q) = \Omega_2^{+}(q)$ by~\cite[ Proposition~2.10.6(iii)]{KL}.

 For (ii) we simply observe that if $\Omega_i = \Omega(V_i)$ is nontrivial, then $(H_{\Omega})_{(\mathcal{D})}$ contains an element $g$ that acts nontrivially on $V_i$ and trivially on $V_j$ for all $ j \ne i$. So (ii) holds unless $\Omega_i =1$, which is only true if $\Omega_i = \mathrm{SL}_1(2)$, $\Omega_2^{+}(2) $ or $\Omega_2^{+}(3)$. 


Finally (iii) always holds from~\cite[Lemma~4.2.2(iii)]{KL} (see also Proposition~4.2.9, p.106, p.109 and (4.2.12) of~\cite{KL}).

 The lemma now follows from~\cite[Proposition~2.10.13]{KL} unless $H$ is of type $$\GL_1(2) \wr S_n,\,\, O^{+}_2(2) \wr S_{n/2}\,\, \textrm{or}\,\, O_2^{+}(3) \wr S_{n/2}.$$ 

 In the first case,  $H$ stabilizes the decomposition $V = V_1 \oplus \cdots \oplus V_n$ where $V_i = \langle v_i \rangle$ and $(v_1,\ldots,v_n)$ is a basis of $V$.  If $n$ is odd, then $H$ is contained in $\norm G K$ where $K$ is a subgroup of type $\GL_1(2) \oplus \GL_{n-1}(2)$; therefore this case does not occur. Suppose $n$  even, define $f(v_i,v_j) = 1 + \delta_{ij}$ and extend $f$ bilinearly to $V$. It is easy to show that $f$ is a symmetric non-degenerate bilinear form on $V$; thus $H_{\Omega}$ is contained in $\Sp_{n}(2)$ and $H$ is not a subspace subgroup. 

 In the second case, subgroups $H$ of this type are never maximal, for they are contained in  $\norm G K $ where $K$ is a subgroup of type $P_{n/2-1}$ (see~\cite[p. 68]{KL}); so these cases do not concern us. 

In the third case, write $V_i = \langle e_i,f_i \rangle$ with $e_i+f_i$  non-degenerate, $(e_i+f_i,e_i+f_i)=-1$ (a non-square when $q=3$) and $Q(e_i-f_i) = +1$ (a square). Write $$W_1 = \langle e_1+f_1, \ldots , e_{n/2}+f_{n/2}\rangle\,\, \textrm{and}\,\, W_2 = \langle e_1-f_1, \ldots , e_{n/2}-f_{n/2}\rangle.$$ Note that $H_{\Omega}$ fixes the orthogonal decomposition $W_1 \perp W_2$. Since the discriminant on $\langle e_i+f_i \rangle$ is $D(Q_{\langle e_i+f_i \rangle}) = -1$ and the discriminant on $ \langle e_i-f_i \rangle$ is $D(Q_{\langle e_i-f_i \rangle}) = 1$ (\cite[(2.5.14)]{KL}), we have $D(W_1) = (-1)^{n/2}$ and $D(W_2) = 1$ by~\cite[ Proposition~2.5.11]{KL}. When $n/2$ is odd, the discriminant $D(W_1)$ is a non-square and the discriminant $D(W_2)$ is a square, and hence $W_1$ and $W_2$ are non-isometric $(n/2)$-subspaces. Therefore $H_{\Omega}$ is contained in a $\C_2$-subgroup of type $O_{n/2}(q)^{2}$. If $n/2=2a$ is even then $D(W_i)=+1$ and $sgn(Q_{W_i}) = +$ if and only if $a(q-1)/2 = n(q-1)/8$ is even by~\cite[Proposition~2.5.10]{KL}. So $H_{\Omega}$ is contained in a subgroup of type $O_{n/2}^{\pm}(q) \wr S_2$. Thus in all cases $H_\Omega$ is contained in another $\C_2$-subgroup of $\Omega$ and therefore $H$ is not a subspace subgroup. 
\end{proof}

\begin{lemma} \label{10.3}
 Let $H$ be a $\C_3$ field extension subgroup of $G$ of type $X_t(q^{r})$ and suppose that $G_0 \ne 
 \POmega_8^{+}(q)$ and $n \ge 5$. Then $H_{\Omega}$ acts irreducibly on $V$ when $t \ge 2$ (which is always true when $G_0$ is symplectic or orthogonal). If $G_0$ is linear or unitary, then $H_{\Gamma} \cap \GL^{\pm}_n(q)$ acts irreducibly on $V$ in all cases.
 \end{lemma}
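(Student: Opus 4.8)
The plan is to follow the structure theory of $\C_3$ subgroups developed in~\cite[Section~4.3]{KL}. Write $n=tr$ with $r$ a prime, and realise $V$ as a $t$-dimensional vector space $V_\sharp$ over $\mathbb{F}_{q_0^r}$. Then $H_\Omega$ contains a normal subgroup $\Omega_\sharp$ isomorphic to the natural quasisimple classical group $X_t(q_0^r)$ acting on $V_\sharp$ in the usual way, together with an element $\phi$ of order $r$ that normalises $\Omega_\sharp$ and induces a generator of $\mathrm{Gal}(\mathbb{F}_{q_0^r}/\mathbb{F}_{q_0})$ on the scalars; for the second assertion, where $G_0$ is linear or unitary, one works instead inside $H_\Gamma\cap\GL_n^{\pm}(q)$, which for $t\ge 2$ contains the corresponding natural classical group over the extension field together with such a $\phi$, and for $t=1$ contains a Singer-type torus. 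The assumptions $G_0\ne\POmega_8^{+}(q)$ and $n\ge 5$ are used to rule out the anomalies caused by triality and the small-rank exclusions built into the definition of $\C_3$ in~\cite[Table~4.3.A]{KL}.

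First I would record, from~\cite[Proposition~2.10.6]{KL} and~\cite[Corollary~2.10.7]{KL}, that $V_\sharp$ is absolutely irreducible for $\Omega_\sharp$ over $\mathbb{F}_{q_0^r}$, the sole exception being $\Omega_\sharp\cong\Omega_2^{\pm}(q_0^r)$, which forces $t=2$ with $G_0$ orthogonal; in that residual case $H_\Omega$ still contains a subgroup of type $\mathrm{O}_2^{\zeta}(q_0^r)\cap\Omega$ that, though not absolutely irreducible, does act $\mathbb{F}_{q_0^r}$-irreducibly on $V_\sharp$. The descent to the base field is then uniform: let $W$ be a nonzero $\mathbb{F}_{q_0}H_\Omega$-submodule of $V$ and extend scalars to $\overline{\mathbb{F}}_q$; by the standard description of restriction of scalars, $V\otimes\overline{\mathbb{F}}_q$ is, as an $\overline{\mathbb{F}}_q\Omega_\sharp$-module, the direct sum of the $r$ Galois twists of the natural module, which are pairwise non-isomorphic irreducibles (using that $X_t$ is quasisimple together with the small-rank exclusions and $n\ge 5$; in the residual case one replaces the natural module by the $2r$ composition factors of the $\mathrm{O}_2^{\zeta}$-subgroup). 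Hence $W\otimes\overline{\mathbb{F}}_q$ is the sum of some subset of these summands. But $\phi$ normalises $\Omega_\sharp$ and, realising the Galois generator on a set of prime size $r$, permutes the $r$ summands in a single $r$-cycle; since $W$ is $\phi$-stable, we get $W=0$ or $W=V$, so $H_\Omega$ — respectively $H_\Gamma\cap\GL_n^{\pm}(q)$ — is irreducible on $V$. For $G_0$ symplectic or orthogonal the classical group over the extension field is of the same type as $G_0$, hence of even rank, respectively rank at least $2$, so $t\ge 2$ holds automatically; and in the remaining $t=1$ cases, which occur only for $G_0$ linear or unitary, a generator of the Singer-type torus has an eigenvalue generating the degree-$n$ extension of the base field, so its minimal polynomial over that field has degree $n$ and irreducibility of $V$ is immediate.

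The hard part is the $t=2$ orthogonal case in the first step: one must verify that $H_\Omega=H_\Gamma\cap\Omega$ really does contain an $\mathbb{F}_{q_0^r}$-irreducible orthogonal transformation of $V_\sharp$, and not merely the split torus $\Omega_2^{+}(q_0^r)$ (which is reducible) — that is, one must track carefully which reflections and semilinear maps survive intersection with $\Omega$ rather than only lying in $H_\Gamma$ or its normaliser in $A$; here one leans on the fact that $r$ is odd, so $n=2r$ with $n\ge 8$ forces $n\ge 10$. A secondary, routine point is confirming the pairwise non-isomorphism of the relevant Galois twists and, in the residual case, that $\phi$ together with the reflection permutes the $2r$ composition factors transitively; both follow once $r$ and the structure of $\mathrm{O}_2^{\zeta}(q_0^r)$ are taken into account.
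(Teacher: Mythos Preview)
The paper's own proof is a one-line citation: ``See~\cite[Lemma~4.3.2 and Proposition~4.3.3(i)]{KL}.'' Your write-up is an attempt to reconstruct that argument from first principles, and the skeleton you give --- absolute irreducibility of $\Omega_\sharp$ on $V_\sharp$ over $\mathbb{F}_{q_0^r}$, followed by the Galois-descent observation that the $r$ twists of the natural module are permuted transitively by the field automorphism $\phi$ --- is indeed the mechanism behind~\cite[Lemma~4.3.2]{KL}. For $t\ge 3$, and for $t=2$ with $\Omega_\sharp$ quasisimple, your sketch is sound and matches what is done there; likewise the $t=1$ Singer argument in the linear/unitary case is exactly~\cite[Proposition~4.3.3(i)]{KL}.

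The gap is the one you flag yourself as ``the hard part'' but do not close: when $G_0$ is orthogonal and $t=2$, so that $\Omega_\sharp\cong\Omega_2^{\zeta}(q_0^r)$, you need an element of $H_\Omega$ (not merely of $H_\Gamma$) that interchanges the two $\mathbb{F}_{q_0^r}$-isotropic lines of $V_\sharp$ in the $+$ case, or more generally that makes $V_\sharp$ irreducible as an $\mathbb{F}_{q_0^r}H_\Omega$-module. Whether such an element survives the passage from $H_\Gamma$ down to $H_\Gamma\cap\Omega$ is governed by determinant and spinor-norm conditions, and this is precisely the bookkeeping carried out in~\cite[Section~4.3]{KL}; your appeal to ``$r$ odd, so $n\ge 10$'' does not by itself supply it. Similarly, your claim that in this residual case $\phi$ together with a reflection permutes the $2r$ composition factors transitively needs the same structural input. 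Since the paper simply defers to~\cite{KL} for exactly these reasons, the most economical repair is to cite~\cite[Lemma~4.3.2 and Proposition~4.3.3(i)]{KL} directly, as the authors do, rather than reprove them in situ.
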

 \begin{proof} 
 See~\cite[Lemma~4.3.2 and Proposition~4.3.3(i)]{KL}.
 \end{proof}

 \begin{proposition}\label{willthiseverend} 
 Suppose that $n \ge 5$, that $G_0 \ne 
 \POmega^{+}_8(q)$, that $H=G_\alpha$ is a subspace subgroup as in Definition~$\ref{def}$ and that $G_0 \cap H$ is not maximal in $G_0$ (so that $H$ is a $G$-novelty). Then $H$ is one of the generic examples in rows $6$, $7$, $12$, $14$, $22$ of~\cite[Table~3.5H]{KL}.
 \end{proposition}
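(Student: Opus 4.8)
The plan is to pin down the Aschbacher class of $H$ using the results already proved, and then to match what survives against the classification of maximal subgroups. For $n\geq 13$ the statement is immediate: \cite[Table~3.5H]{KL} lists every novelty, and scanning it for the rows in which $H_{\overline{\Omega}}$ is a subspace subgroup — discarding row~15, which as noted above does not give a maximal subgroup — leaves exactly rows $6$, $7$, $12$, $14$, $22$. So assume $5\leq n\leq 12$. By Aschbacher's theorem~\cite{Asch} we have $H\in\C_i(G)$ for some $i\in\{1,\dots,9\}$, and Proposition~\ref{10.1} removes $i\in\{4,\dots,9\}$; it remains to deal with $\C_1$, $\C_2$ and $\C_3$.

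First I would eliminate $\C_3$. If $i=3$ then Lemma~\ref{10.3} shows that $H\cap G_0$ acts irreducibly on $V$ (using the irreducibility of $H_\Omega$ when $t\geq 2$, and of $H_\Gamma\cap\GL^\pm_n(q)$ — hence of the cyclic irreducible subgroup it contains — when $t=1$), so $H$ is not contained in a reducible maximal subgroup of $G_0$; by Definition~\ref{def} the only way out would be $G_0=\Sp_n(2^f)$ with $H\cap G_0$ lying inside a maximal subgroup of type $\O_n^\pm(2^f)$, but then $H\leq\norm{G}{\O_n^\pm(2^f)}$, which is a maximal $\C_8$-subgroup of $G$, contradicting maximality of the $\C_3$-subgroup $H$. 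Next, if $i=2$, then by Lemma~\ref{10.2} $H$ is not of type $\GL^\pm_k(q)\wr S_t$, $\Sp_k(q)\wr S_t$ or $\O_k^\zeta(q)\wr S_t$; since the only other $\C_2$-subgroups are the stabilizers of a complementary pair of maximal totally singular subspaces (of type $\GL_{n/2}(q).2$ for $G_0$ symplectic or orthogonal, and $\GL_{n/2}(q^2).2$ for $G_0$ unitary), $H$ must be of that type. Here the only maximal subgroups of $G_0$ containing $H\cap G_0$ are the two parabolics stabilizing the members of the pair, so $H$ is a subspace subgroup; and since the transformation swapping the two subspaces lies in $G_0$ precisely when $G_0$ is symplectic or unitary, $H\cap G_0$ fails to be maximal in $G_0$ — i.e.\ $H$ is a novelty — exactly when $G_0$ is orthogonal. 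This is row~22.

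Finally, if $i=1$, then by Definition~\ref{def} $H$ is a subspace subgroup of type~(a), and I would determine which such $H$ are novelties directly from the lists of maximal subgroups: from \cite{kthesis} for $5\leq n\leq 11$ (recall $G_0\neq\POmega_8^+(q)$), and by redoing the relevant part of the analysis of \cite{KL} by hand for $n=12$, which \cite{kthesis} does not cover. The novelties that arise are exactly the stabilizers of a $k$-subspace in $\PSL_n(q)$ with graph automorphisms present, of types $P_{k,n-k}$ and $\GL_k(q)\oplus\GL_{n-k}(q)$ (rows $6$ and $7$), and, for $G_0$ orthogonal with $n$ even, the parabolic of type $P_{n/2-1}$ (row~12) and the stabilizer of a nondegenerate $\O_2^+(3)\perp\O_{n-2}(3)$ decomposition when $G$ contains similarities (row~14). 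The hard part is this last step: it requires going through the maximal-subgroup tables of \cite{kthesis} case by case for each classical $G_0$ with $5\leq n\leq 11$, and carrying out the $n=12$ analysis from scratch in the style of \cite[Chapter~7]{KL}, in each instance certifying exactly which reducible (and $\GL_{n/2}(q).2$) subgroups drop off the maximal-subgroup list of $G_0$ and thus become genuine novelties.
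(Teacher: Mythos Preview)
Your overall strategy matches the paper's, but there are two genuine gaps.

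First, your enumeration of the $\C_2$-subgroups surviving Lemma~\ref{10.2} is incomplete. Lemma~\ref{10.2} only rules out the \emph{wreath} types $\GL_k^{\pm}(q)\wr S_t$, $\Sp_k(q)\wr S_t$, $\O_k^{\zeta}(q)\wr S_t$. In orthogonal groups with $q$ odd and $n/2$ odd there is also the non-isometric type $\O_{n/2}(q)^2$ (a decomposition $V=W_1\perp W_2$ into odd-dimensional subspaces of different discriminant); this is reducible, hence a subspace subgroup, and is not covered by your argument. The paper lists it explicitly (item~(4) in its proof) and checks maximality in $G_0$ via \cite{kthesis}. Relatedly, your claim that $\GL_{n/2}(q).2$ is a novelty ``exactly when $G_0$ is orthogonal'' needs the restriction $n/2$ odd (row~22 has this hypothesis).

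Second, and more seriously, your $\C_1$ step is not just a routine table-check. When you go through \cite{kthesis} you will find reducible subgroups that are \emph{not} maximal in $G_0$ yet do not appear among rows~6, 7, 12, 14, 22: in particular the type $\O_2^+(q)\perp\O_{n-2}^\epsilon(q)$ for $q\in\{2,3\}$ (all relevant $n$), and anomalies for $G_0=\PSU_5(2),\PSU_6(2)$. These are potential novelties outside your list, and each requires a further argument. The paper disposes of the $q=2$ orthogonal cases by exhibiting a unique non-singular vector in the $2$-space, so $H$ lies in an $\Sp_{n-2}(2)$-type subgroup and is not maximal in $G$; for $q=3$ with $n$ odd it finds $H$ inside an $\O_1(3)\perp\O_{n-1}^\pm(3)$-type subgroup; and the small unitary cases are handled by a \texttt{magma} computation. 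Without these additional arguments your proof is incomplete: you have not shown that these stray non-maximal $\C_1$-subgroups fail to produce maximal subgroups of $G$.

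(Your $\C_3$ elimination for symplectic groups in even characteristic also has a small logical gap: from $H\cap G_0\leq\O_n^\pm(2^f)$ you cannot conclude $H\leq\norm G{\O_n^\pm(2^f)}$ unless that overgroup is unique. This is fixable, but needs a word.)
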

 \begin{proof} 
 By Proposition~\ref{10.1} and Lemmas~\ref{10.2} and~\ref{10.3}, for $n \le 11$ we just need to check in~\cite{kthesis} that subgroups of $G_0$ of the following types are indeed maximal in $G_0$: 
 \begin{enumerate}
 \item type $\C_1 (\overline{\Omega})$,
 \item type $\GL_1^{\epsilon}(q^{n})$ when $n$ is prime and $G_0 = \PSL_n^{\epsilon}(q)$,
 \item type $\GL_{n/2}(q^{u}).2$ when $G_0 = \mathrm{PSU}_n(q)$, $\POmega_{2m}^{+}(q)$ or when $G_0 = \PSp_{2m}(q)$ and $q$ is odd,
 \item type $O_{n/2}(q)^{2}$ when $q$ is odd and $n/2$ is odd. 
 \end{enumerate}
 Note that when $G_0$ is symplectic, we may assume that $q \ge 3$ since otherwise $\mathrm{Out}(G_0)=1$ and there are no novelties.
 The result now follows easily by inspection unless (i) $G_0 = \PSU_6(2)$, $\PSU_5(2)$, 
 or (ii) $G_0 = \POmega^{\pm}_{n}(2)$ or $\POmega_{n}^{\epsilon}(3)$ and $H$ is a $\C_1$-subgroup of type $O_2^{+}(q) \perp O_{n-2}^{\epsilon}(q)$. We check  the case $G_0 = \PSU_6(2)$ and $\PSU_5(2)$ with a comutation in \texttt{magma}, and we will deal with case (ii) together with the case $n=12$.

 When $n =12$, Proposition~\ref{10.1} and Lemmas~\ref{10.2},~\ref{10.3} show that there are no novelties with $H$ a group of type $\C_i$ for $i \in \{2,\ldots, 9\}$. For reducible subgroups, it is straightforward to show that the only possibilities are either in rows 6, 7, 12, 14, 22 of~\cite[Table~3.5H]{KL}, or $G_0 = \POmega_{12}^{\pm}(2)$ and $H$ is of type $O_2^{+}(2)\perp O^{\epsilon}_{10}(2)$. 

 But in the latter case, $\Aut(G_0) = G= \mathrm{PSO}_{2m}^{\pm}(2)$, and $H$ stabilizes an orthogonal decomposition $V = V_1 \perp V_2$ with $\dim V_1=2$. There is a unique non-singular vector $v \in V_1$, thus $H$ and $H_{\Omega}$ also stabilize the decomposition $V = \langle v \rangle \perp \langle v \rangle^\perp$. That is, $H$ and $H_{\Omega}$ are contained in subgroups of type $\Sp_{n-2}(2)$ and no novelties occur. 

Finally, suppose $G_0= \POmega_{n}^{\epsilon}(3)$ and $H$ is a $\C_1$-subgroup of type $O_2^{+}(q) \perp O_{n-2}^{\epsilon}(q)$. The case $n$ even is  in row 14 of~\cite[Table~3.5H]{KL} and hence we may assume that $n$ is odd and $G= \mathrm{PSO}_{n}(3)$. Now $H$ and $H_{\Omega}$ stabilize an orthogonal decomposition $V = V_1 \perp V_2$ where $\dim V_1=2$; it is easy to see that $V_1$ has only one $1$-space $\langle v \rangle$ such that $Q(v)=1$ ($v = \pm (e_1-f_1)$) and only one $1$-space $\langle w \rangle$ such that $Q(w)=-1$ (which is a nonsquare when $q=3$). Thus $G_\alpha$ fixes $\langle v \rangle$ and $\langle w \rangle$ 
 and hence $H$ is contained in a subgroup of type $O_1(3) \perp O^{\pm}_{n-1}(3)$; so there are no novelties here.
 \end{proof}

Proposition~\ref{willthiseverend} shows that, when $G_0\neq \POmega_8^+(q)$, for the proof of Theorem~\ref{thrm:AS} the only $G$-novelties we need to consider are the generic examples in rows $6$, $7$, $12$, $14$, $22$ of~\cite[Table~3.5H]{KL}.

Next, we turn our attention to $G_0 = \POmega_8^{+}(q)$. We check~\cite{D4max} carefully and we find that the only possibilities are the following: 
\begin{itemize}
\item[(i)] $H \cap G_0$ is the stabilizer of a totally singular $3$-subspace and $G$ does not contain a triality automorphism (as in row 12 of~\cite[Table~3.5H]{KL}); 
\item[(ii)] $G$ does contain a triality automorphism and $H$ is a parabolic subgroup $P_2$ corresponding to the removal of the three outer nodes of the Dynkin diagram. 
\end{itemize}

For the proof of Theorem \ref{thrm:AS}, case (i)  is considered in Section~\ref{sec:line12+14}. We conclude this section dealing with case (ii). Here we know that $|H|$ divides $q^{11}(q-1)^2
|\PGL_2(q)||\Sym(4)|e$; thus for $x \in G$ of prime order $r$ we have $\fpr_{\Omega}(x)=0$ unless $r \mid 6ep(q^2-1)$, by Lemma~\ref{obvious3}. Now the usual calculations show that
\[
\SGO\leq \omega(6ep(q^2-1))\frac{4}{3q},
\]
 which is less than $1$ unless $q=2,4$. We eliminate the case $q\in \{2,4\}$ with the help  of \texttt{magma}. 

\subsection{Novelties: Lines~$12$  and~$14$ in \cite[Table~$3.5H$]{KL}}\label{sec:line12+14}
 Here $G_0=\POmega_{2m}^+(q)$ and $G_0\lhd G\leq \mathrm{Aut}(G_0)$. The group $G$ acts primitively on the set $\OmSet$ of totally isotropic subspaces of $V$ of dimension $m-1$ or on the set $\OmSet$ of non-degenerate $2$-subspaces of $+$ type. 
\begin{proposition}\label{line12+14}
If $g\in G$, then $g$ has a regular orbit on $\OmSet$.
\end{proposition}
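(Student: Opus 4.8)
The plan is to treat the two possibilities for $\OmSet$ separately, reducing in each case to elements $g$ of square-free order via Lemma~\ref{basic}. Suppose first that $\OmSet$ is the set of totally isotropic $(m-1)$-subspaces of $V$ (row~$12$ of~\cite[Table~3.5H]{KL}); then $G$ contains a graph automorphism interchanging the two $G_0$-orbits on maximal totally isotropic subspaces of $V$, and, when $m=4$, no triality automorphism. I would exploit the geometric fact that every totally isotropic $(m-1)$-subspace $W$ is contained in exactly two maximal totally isotropic subspaces $U_W^{1},U_W^{2}$, one from each $G_0$-orbit, with $W=U_W^{1}\cap U_W^{2}$. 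Let $G^{+}$ be the index-$2$ subgroup of $G$ fixing each of the two $G_0$-orbits of maximal totally isotropic subspaces, so $\soc(G^{+})=G_0$. If $g\in G^{+}$ fixes $W$, then $g$ fixes both $U_W^{1}$ and $U_W^{2}$, since $g$ cannot move a subspace from one $G_0$-orbit to the other; hence $\mathrm{Stab}_{G^{+}}(W)=\mathrm{Stab}_{G^{+}}(U_W^{1})\cap\mathrm{Stab}_{G^{+}}(U_W^{2})\le\mathrm{Stab}_{G^{+}}(U_W^{1})$, so, in the ordering of Definition~\ref{def:char}, the permutation character of $G^{+}$ on one $G_0$-orbit of maximal totally isotropic subspaces is $\leq$ the permutation character of $G^{+}$ on $\OmSet$. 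Since $n\geq 8$ by Remark~\ref{smallorth}, the exceptional socle in Proposition~\ref{totiso} does not occur, so $g$ has a regular cycle on the maximal totally isotropic subspaces, and Lemma~\ref{characterlemma} then gives a regular cycle of $g$ on $\OmSet$.

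For $g\in G\setminus G^{+}$ I would argue exactly as in Proposition~\ref{propPSLgraph}. Such a $g$ interchanges the two $G_0$-orbits, so $|g|$ is even; write $g=xy=yx$ with $|x|=2$ and $|y|$ odd, so $y\in G^{+}$ and $x$ interchanges the two orbits. If $|y|=1$ then $g$ has order $2$ and trivially has a regular cycle, so assume $|y|>1$ and use Proposition~\ref{totiso} to choose $A$ in the first $G_0$-orbit such that $\{A^{h}:h\in\langle y\rangle\}$ is a regular $\langle y\rangle$-orbit. Since $A$ has $(q^{m}-1)/(q-1)\geq 2$ hyperplanes, each extending uniquely to a maximal totally isotropic subspace of the second orbit, I may choose $B$ in the second orbit with $\dim(A\cap B)=m-1$ and $B\neq A^{x}$, and set $\omega=\{A,B\}\in\OmSet$. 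For even $i$ with $1\leq i<|g|$ we have $g^{i}=y^{i}$, which preserves both orbits, so $\omega^{g^{i}}=\omega$ would force $A^{y^{i}}=A$ and hence $y^{i}=1$, a contradiction; for odd $i$ with $1\leq i<|g|$, $g^{i}$ interchanges the orbits, so $\omega^{g^{i}}=\omega$ forces $A^{g^{i}}=B$ and $B^{g^{i}}=A$, whence $A^{y^{2i}}=A^{g^{2i}}=A$, so $|y|$ divides $i$ and therefore $i=|y|$; but then $\omega^{g^{|y|}}=\omega^{x}=\{A^{x},B^{x}\}\neq\omega$ by the choice of $B$. Thus $\langle g\rangle$ has a regular orbit on $\omega$, which completes this case.

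Now suppose that $\OmSet$ is the set of non-degenerate $2$-subspaces of $+$ type (row~$14$), so $q=3$; here I would use fixed-point-ratios in the style of Section~\ref{Caseiv}. For a prime $r\mid|g|$ with $r\nmid ep(q^{2}-1)$, the element $x=g^{|g|/r}$ is semisimple, the order $\ell$ of $q$ modulo $r$ satisfies $\ell\geq 3$, and, writing $\ell'=\dim_{\mathbb{F}_q}[V,x']\geq\ell$ for a semisimple preimage $x'\in\GL(V)$, the argument of Proposition~\ref{prop} shows that every $x$-fixed member of $\OmSet$ lies in $\cent V{x'}$; comparing the number of $+$-type $2$-subspaces of the non-degenerate space $\cent V{x'}$ of dimension $n-\ell'$ with $|\OmSet|$ then gives $\fpr_{\OmSet}(x)\leq 4/q^{2\ell'}\leq 4/q^{2\ell}$. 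For the primes $r\mid ep(q^{2}-1)$ I would use the bound $\fpr_{\OmSet}(g^{|g|/r})<f(n,q)$ from~\cite[Proposition~3.16]{GK} for the stabilizer of a non-degenerate $2$-subspace. Combining these with $\omega_q(q^{\ell}-1)\leq\ell\log_2 q$ and Lemma~\ref{series} yields
\[
\SGO\leq\omega(ep(q^{2}-1))\,f(n,q)+4\log_2(q)\left(\frac{q^{2}}{(q^{2}-1)^{2}}-\frac{1}{q^{2}}-\frac{2}{q^{4}}\right),
\]
and, since $q=3$ and $n\geq 8$ is even, inserting the explicit value of $\omega(ep(q^{2}-1))$ shows $\SGO<1$ except for finitely many socles of small dimension (such as $\POmega_{8}^{+}(3)$, $\POmega_{10}^{+}(3)$ and $\POmega_{12}^{+}(3)$), which I would dispatch by computing $\SGO$, or the permutation representation of $G$ on $\OmSet$, directly in \texttt{magma}. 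The step I expect to be the main obstacle is the treatment, in the first case, of the elements of $G$ that interchange the two $G_0$-orbits of maximal totally isotropic subspaces: once the two-to-one incidence between these subspaces and the totally isotropic $(m-1)$-subspaces is in place, the rest is the combinatorial bookkeeping imported from Proposition~\ref{propPSLgraph}, while the fixed-point-ratio estimate in the second case is routine given~\cite[Proposition~3.16]{GK} and the machinery of Section~\ref{Caseiv}, apart from the usual small-dimensional checks in \texttt{magma}.
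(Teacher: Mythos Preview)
Your argument is correct, but the paper's proof is much shorter and handles both rows uniformly. The key observation you are missing is that Theorem~\ref{FGK} applies directly in \emph{both} cases: in row~$12$ the stabilized subspace is totally singular of dimension $m-1<m$, so exception~(1) of Theorem~\ref{FGK} does not apply; in row~$14$ the non-degenerate $2$-subspaces are of $+$ type and therefore \emph{do} contain singular points, so exception~(2) does not apply either. Hence in both cases $1_P^G\leq 1_K^G$, where $P$ is the stabilizer of a singular $1$-subspace and $K$ is the point stabilizer on $\OmSet$, and the result follows immediately from Lemma~\ref{characterlemma} and Proposition~\ref{caseiiiPSLPSU}. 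No case distinction, no treatment of elements swapping the two families of maximal totally singular subspaces, and no separate fixed-point-ratio analysis or \texttt{magma} computation for row~$14$ are needed.

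Your route for row~$12$, comparing instead with the action on one $G_0$-orbit of maximal totally singular subspaces via the incidence $W\mapsto\{U_W^1,U_W^2\}$, is a valid alternative and mirrors Proposition~\ref{propPSLgraph}; it has the merit of being self-contained and not relying on the character inequality from~\cite{FGK}, at the cost of the extra bookkeeping for $g\notin G^+$. Your row~$14$ argument via fixed-point ratios in the style of Section~\ref{Caseiv} also works in outline (one must check that \cite[Proposition~3.16]{GK} indeed yields a usable bound for $+$-type $2$-subspaces, which it does), but it reintroduces a finite list of small cases to be checked computationally, all of which the paper's character-theoretic shortcut avoids.
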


\begin{proof}
By Theorem~\ref{FGK}, the permutation character of $G$ in its action on $\OmSet$ is contained in the permutation character of  $G$ in its action on the totally singular $1$-subspaces of $V$. Now the result follows from Lemma~\ref{characterlemma} and Proposition~\ref{caseiiiPSLPSU}.
\end{proof}

\subsection{Novelties: Line~$22$ in \cite[Table~$3.5H$]{KL}}\label{sec:line22}

Here $G_0=\POmega_{2m}^+(q)$ with $m$ odd and $G_0\lhd G\leq \mathrm{Aut}(G_0)$. The group $G$ acts primitively on the set 
\[\OmSet=\{\{U,W\}\mid V=U+W,\,\dim U=\dim W=m,\,\textrm{and }U,W\textrm{ totally singular}\}.\] 
\begin{proposition}\label{line22}
If $g\in G$, then $g$ has a regular orbit on $\OmSet$.
\end{proposition}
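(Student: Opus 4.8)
The plan is to argue exactly as in the proof of Proposition~\ref{propPSLgraph}: here $G$ contains a ``graph--type'' automorphism interchanging the two families of maximal totally singular subspaces of $V$, and this plays the role played there by $\tau$. Write $\Sigma_1$ and $\Sigma_2$ for the two $G_0$-orbits on maximal totally singular $m$-subspaces of $V$. Since $m$ is odd, two maximal totally singular subspaces meeting trivially lie in different families (recall that two such subspaces lie in the same family precisely when the dimension of their intersection is congruent to $m$ modulo $2$), so in every element $\{U,W\}$ of $\OmSet$ one of $U,W$ lies in $\Sigma_1$ and the other in $\Sigma_2$. As $H=G_\alpha$ is a novelty of type $\GL_m(q).2$, and the ``$.2$'' interchanges the two subspaces of such a pair, the mere primitivity of this $G$-action forces $G$ to contain an element swapping $\Sigma_1$ and $\Sigma_2$; let $G^{+}$ be the subgroup of index $2$ in $G$ fixing each of $\Sigma_1,\Sigma_2$, so that $G_0\le G^{+}$. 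By Remark~\ref{smallorth} together with $m$ odd we have $n=2m\ge 10$, so the exceptional outcome of Proposition~\ref{totiso} cannot occur whenever we apply that proposition to a primitive overgroup of $G_0$ acting on $\Sigma_1$. Finally, by Lemma~\ref{basic} we may assume that $|g|$ is square-free.

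Suppose first that $g\in G^{+}$. Then $\langle g,G_0\rangle$ acts on $\Sigma_1$, and this action is primitive (it is an overgroup of the primitive group $G_0$) with socle $G_0$; by Proposition~\ref{totiso} there is $U\in\Sigma_1$ on which $\langle g\rangle$ has a regular orbit. Choose a totally singular $W\in\Sigma_2$ with $V=U\oplus W$ and set $\omega=\{U,W\}\in\OmSet$: if $\omega^{g^{i}}=\omega$ with $1\le i<|g|$, then, since $g^{i}$ preserves the families, we must have $U^{g^{i}}=U$, contradicting regularity; thus $\omega$ has a regular $g$-orbit. Now suppose $g\notin G^{+}$. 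Then $g$ swaps the two families, $|g|=2k$ with $k$ odd, and we may write $g=xy=yx$ with $|x|=2$ and $|y|=k$; as $x$ is an odd power of $g$ it also swaps the families, while $y$, an even power, lies in $G^{+}$. Applying Proposition~\ref{totiso} to $\langle y,G_0\rangle$ acting on $\Sigma_1$ gives $W\in\Sigma_1$ on which $\langle y\rangle$ has a regular orbit, so $W^{y^{j}}=W$ if and only if $k\mid j$. Fix a totally singular complement $U\in\Sigma_2$ of $W$ and put $\omega=\{W,U\}\in\OmSet$. Since $x^{2}=1$ and $[x,y]=1$, we have $g^{i}=y^{i}$ for $i$ even, $g^{i}=xy^{i}$ for $i$ odd, and $(xy^{i})^{2}=y^{2i}$; a short case analysis on the parity of $i$ (using that $y\in G^{+}$ while $x$ interchanges $\Sigma_1$ with $\Sigma_2$) shows that for $1\le i<2k$ one has $\omega^{g^{i}}=\omega$ if and only if $i=k$ and $U=W^{xy^{k}}$. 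Since $m$ is odd, every totally singular complement of $W$ lies in $\Sigma_2$, and there are more than one of them, so we may take $U\ne W^{xy^{k}}$; then $\omega$ has a regular $g$-orbit, as required.

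The only case with any content is $g\notin G^{+}$, and the only genuine computation is the parity case analysis identifying $W^{xy^{k}}$ as the unique complement of $W$ producing a non-regular $g$-orbit, which I expect to be entirely routine; the surrounding material is bookkeeping about the two families of maximal totally singular subspaces when $m$ is odd (and the trivial fact that $W$ has more than one totally singular complement). I do not anticipate a real obstacle: this proposition is the orthogonal counterpart of \textbf{Case}~$(v)$ of Proposition~\ref{reduction}, and the argument of Proposition~\ref{propPSLgraph} transfers essentially verbatim.
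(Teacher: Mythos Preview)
Your proof is correct. Both your argument and the paper's are modeled on Proposition~\ref{propPSLgraph}, but the executions differ in one interesting respect. You split into the cases $g\in G^{+}$ and $g\notin G^{+}$, reduce to square-free order via Lemma~\ref{basic}, and in the second case carry out an explicit parity analysis to isolate the single ``bad'' complement $W^{x}$; you then finish using only the elementary fact that $W$ has more than one totally singular complement. The paper instead treats $g$ uniformly: it first asserts (via Proposition~\ref{totiso}) that $g$ has a regular orbit on the set of \emph{all} maximal totally singular subspaces, takes $U$ in such an orbit, and then observes that if every totally singular complement of $U$ lay in the $g$-orbit of $U$ one would have $|g|\ge q^{m(m-1)/2}$, contradicting the element-order bound $|g|\le q^{m+1}/(q-1)$ from~\cite{GMPS}. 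So the paper trades your case analysis for a counting argument and an external bound on element orders; your version is longer but more self-contained, needing neither the maximal-order result nor the exact count $q^{m(m-1)/2}$ of complements. (A small cosmetic point: since $|y|=k$, your expression $W^{xy^{k}}$ is just $W^{x}$.)
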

\begin{proof}
The proof is similar to the proof of Proposition~\ref{propPSLgraph}. From Remark~\ref{smallorth}, we have $m\geq 4$, and hence $m\geq 5$ because $m$ is odd. 

 From Proposition~\ref{totiso}, the element $g$ has a regular orbit in its action on the totally singular subspaces of $V$ of dimension $m$. Let $U$ be an element in such a regular orbit. Suppose that there exists $W\notin \{U^{g^t}\mid t\geq 0\}$ with $\{U,W\}\in \OmSet$. Then, clearly, $g$ has a regular orbit on $\{U,W\}$. In particular, we may assume that the only totally singular subspaces $W$ of $V$ with $V=U+W$ are of the form $U^{g^i}$, for some positive integer $i$. An easy computation shows that there are exactly $q^{m(m-1)/2}$ totally singular complements $W$ for $U$ in $V$. Therefore $|g|\geq q^{m(m-1)/2}$. However this contradicts~\cite[Theorem~$2.16$]{GMPS}, which states that $|g|\leq q^{m+1}/(q-1)$.
\end{proof}

\thebibliography{10}

\bibitem{Asch}
M.~Aschbacher, On the maximal subgroups of the finite classical groups, \textit{Invent. Math.} \textbf{76} (1984), no.~3, 469--514.

\bibitem{magma}W.~Bosma, J.~Cannon, C.~Playoust, The Magma algebra system. I. The user language, \textit{J. Symbolic Comput.} \textbf{24} (1997), 235--265. 


\bibitem{Tim1}T.~Burness, Fixed point ratios in actions of finite classical groups I, \textit{J.~Algebra} \textbf{309} (2007), 69--79.

\bibitem{Tim2}T.~Burness, Fixed point ratios in actions of finite classical groups II, \textit{J.~Algebra} \textbf{309} (2007), 80--138.

\bibitem{Tim3}T.~Burness, Fixed point ratios in actions of finite classical groups III, \textit{J.~Algebra} \textbf{314} (2007), 693--748.

\bibitem{Tim4}T.~Burness, Fixed point ratios in actions of finite classical groups IV, \textit{J.~Algebra} \textbf{314} (2007), 749--788.

\bibitem{BG}T.~Burness, S.~Guest, On the uniform spread of almost simple linear groups, \textit{Nagoya Math. J.} \textbf{209} (2013), 37--110.

\bibitem{Cameron1}P.~J.~Cameron, Finite permutation groups and  simple groups, \textit{Bull. London Math. Soc.} \textbf{13} (1981), 1--22.

\bibitem{Cameron}P.~J.~Cameron, \textit{Projective and polar spaces}, Queen Mary and Westfield College Lecture Notes, 2000.

\bibitem{ATLAS}J.~H.~Conway, R.~T.~Curtis, S.~P.~Norton, R.~A.~Parker,
 R.~A.~Wilson, \textit{Atlas of finite groups}, Clarendon Press, Oxford, 1985.

\bibitem{EZ}L.~Emmett, A.~Zalesski, On regular orbits of elements of classical groups in their permutation representations, \textit{Comm. Algebra} \textbf{39} (2011), 3356--3409.

\bibitem{FGK}D.~Frohardt, R.~Guralnick, K.~Magaard, Incidence matrices, permutation characters, and the minimal genus of a permutation group, \textit{J. Comb. Theory Series A} \textbf{98} (2002), 87--105.

\bibitem{GPSreduction}M.~Giudici, C.~E.~Praeger, P.~Spiga, Finite primitive groups and regular orbits of group elements, submitted, \href{http://arxiv.org/pdf/1311.3906v1.pdf}{arXiv:1311.3906 [math.GR]}.

\bibitem{GLS}D.~Gorenstein, R.~Lyons, R.~Solomon, \textit{The classification of the finite simple groups, Number~$3$} Volume~40, 1998. 

\bibitem{GMPS}S.~Guest, J.~Morris, C.~E.~Praeger, P.~Spiga, On the maximal order of elements of finite almost simple groups and primitive permutation groups, \textit{Trans. Amer. Math. Soc.}, to appear.

\bibitem{GuPrSp}S.~Guest, A.~Previtali, P.~Spiga, A remark on the permutation representations afforded by the embeddings of $\mathrm{O}^{\pm}_{2m}(2^f)$ in $\mathrm{Sp}_{2m}(2^f)$, \textit{Bull. Austr. Math. Soc.} \textbf{89} (2014).

\bibitem{GK}R.~Guralnick, W.~M.~Kantor, Probabilistic generation of finite simple groups, \textit{J. Algebra} \textbf{234} (2000), 743--792.

\bibitem{D4max}
P.~B.~Kleidman, The maximal subgroups of the finite $8$-dimensional orthogonal groups ${P\Omega}^+_8(q)$ and of their automorphism groups,
 \textit{J. Algebra} \textbf{110} (1987), no.~1, 173--242.

\bibitem{kthesis}
P.~B.~Kleidman, \emph{The subgroup structure of some finite simple groups},
 Ph.D. thesis, University of Cambridge, 1987.

\bibitem{KL}P.~Kleidman, M.~Liebeck, \emph{The subgroup structure of the finite classical groups}, London Mathematical Society Lecture Note Series 129, Cambridge University Press, Cambridge, 1990.

\bibitem{LNPS}M.~Law, A.~Niemeyer, C.~E.~Praeger, \'{A}.~Seress, A reduction algorithm for large-base primitive permutation groups, \textit{London Math. Soc. J. Comput. Math.} \textbf{9} (2006), 159--173.

\bibitem{LS}M.~W.~Liebeck, J.~Saxl, Minimal degrees of primitive permutation groups, with an application to monodromy groups of covers of Riemann surfaces, \textit{Proc. London Math. Soc.~(3)}
 \textbf{63}, (1991), 266--314.

\bibitem{Vav4}V.~D.~Mazurov, A.~V.~Vasil$'$ev, Minimal permutation representations of finite simple orthogonal groups. (Russian) Algebra i Logika 33 (1994), no. 6, 603--627, 716; translation in Algebra and Logic 33 (1994), no. 6, 337--350. 

\bibitem{NeSe}M.~Neunh\"offer, \'{A}.~Seress: GAP Package ``recog'' \href{http://www-groups.mcs.st-and.ac.uk/~neunhoef/Computer/Software/Gap/recog.html}{http://www-groups.mcs.st-and.ac.uk/neunhoef}.

\bibitem{Robin}G.~Robin,
Estimation de la fonction de Tchebychef $\Theta$ sur le $k$-i\`eme
nombre premier et grandes valeurs de la fonction $\omega(n)$ nombre de
diviseurs premiers de $n$, \textit{Acta Arith.} \textbf{42}, (1983), 367--389.

\bibitem{SZ}
J.~Siemons, A.~Zalesski\u{\i}, Intersections of matrix algebras and
  permutation representations of {${\rm PSL}(n,q)$}, \emph{J. Algebra} \textbf{226}
  (2000), no.~1, 451--478.

\bibitem{SZ1}
J.~Siemons, A.~Zalesski\u{\i}, Regular orbits of cyclic subgroups in permutation 
representations of certain simple groups. \emph{J. Algebra} \textbf{256} (2002), 611--625.

\end{document}